\newtheorem{thm}{Theorem}
\newtheorem{lem}[thm]{Lemma}
\newtheorem{prop}[thm]{Proposition}
\newtheorem{cor}[thm]{Corollary}
\newtheorem{rem}{Remark}
\newtheorem{mydef}{Definition}
\newtheorem{conj}[thm]{Conjecture}
\newtheorem{notation}{Notation}
\title{Optimal lower bounds for Donaldson's J-functional}
\author{Zakarias Sj\"ostr\"om Dyrefelt}
\email{zsjostro@ictp.it}
\begin{document}

\begin{abstract}
In this paper we provide an explicit formula for the optimal lower bound of Donaldson's J-functional, in the sense of finding explicitly the optimal constant in the definition of coercivity, which always exists 
and takes negative values in general. This constant is positive precisely if the J-equation admits a solution, and the explicit formula has a number of applications. First, this leads to new 
existence criteria for constant scalar curvature K\"ahler (cscK) metrics in terms of Tian's alpha invariant. Moreover, we use the above formula to discuss Calabi dream manifolds and an analogous notion for the J-equation, and show that for surfaces the optimal bound is an explicitly computable rational function which typically tends to minus infinity as the underlying class approaches the boundary of the K\"ahler cone, even when the underlying K\"ahler classes admit cscK metrics. As a final application we show that if the Lejmi-Sz\'ekelyhidi conjecture holds, then the optimal bound coincides with its algebraic counterpart, the set of J-semistable classes equals the closure of the set of uniformly J-stable classes in the K\"ahler cone, and there exists an optimal degeneration for uniform J-stability.
\end{abstract}

\maketitle

%\footnotesize
%\noindent \textbf{Keywords}: Donaldson's J-functional, properness, J-stability, cscK metrics, Calabi dream manifolds, Tian's alpha invariant.

\small

\section{Introduction}

\noindent An important question in K\"ahler geometry is existence of canonical metrics on compact K\"ahler manifolds, and a central role is played by the study of properness of energy functionals on the space of K\"ahler metrics. In particular, the work of Mabuchi \cite{MabuchiKenergy1, Mabuchi, Mabuchisymplectic} and many others established a strong connection between existence of constant scalar curvature K\"ahler metrics and properness of the Mabuchi K-energy functional. A milestone of this approach was Tian's properness conjecture (see e.g. \cite{Tian, DR, BDL}) which was eventually proved in a celebrated series of papers \cite{ChenChengI, ChenChengII, ChenChengIII}. More precisely, building on the work of \cite{Calabiextremal,Mabuchisymplectic, Mabuchi, BB, ChenLiPaun, BDL} and  others, X.X. Chen and J. Cheng recently clarified that existence of twisted cscK metrics is equivalent to properness of the twisted Mabuchi K-energy functional defined on the space of K\"ahler potentials on $X$. 

Motivated by the above we now study various aspects of properness of the functionals corresponding to the constant scalar curvature equation and Donaldson's J-equation respectively. 
To state our results, let $(X,\omega)$ be a compact K\"ahler manifold, with discrete automorphism group, and write $$\mathcal{H} := \{\varphi \in \mathcal{C}^{\infty}(X) \; \vert \; \omega_{\varphi} := \omega + \sqrt{-1}\partial \bar{\partial}\varphi > 0 \}$$ for the associated space of K\"ahler potentials. 
Let $\theta$ be an auxiliary K\"ahler form on $X$ and consider Donaldson's functional $\mathrm{E}_{\omega}^{\theta}$ (in the literature often referred to as the J-functional, see Section \ref{Section variational setup} for the precise terminology), whose critical point equation is precisely Donaldson's J-equation
\begin{equation} \label{Equation J intro}
\mathrm{Tr}_{\omega_{\varphi}} \theta = c,
%\theta \wedge \omega_{\varphi}^{n-1} = c \omega_{\varphi}^n,
\end{equation}
where $\varphi \in \mathcal{H}$ and $c$ is the only possible topological constant. The J-functional $\mathrm{E}_{\omega}^{\theta}$ can more generally be considered for arbitrary smooth closed $(1,1)$-forms $\theta$, and appears as the pluripotential part in the Chen-Tian decomposition
\begin{equation}
\mathrm{M}_{\omega}^{\theta} = \mathrm{E}_{\omega}^{\theta} + \mathrm{H}_{\omega} 
%\int_X \log\frac{\omega_{\varphi}^n}{\omega^n} \omega_{\varphi}^n
\end{equation}
of the twisted Mabuchi K-energy functional, see \cite{Chen2000}, thus connecting properness of $\mathrm{E}_{\omega}^{\theta}$ to existence of cscK metrics. The main goal of this paper is to study properness of Donaldson's J-functional, by determining how the numerical \emph{stability threshold}
\begin{equation}\label{Equation properness intro}
\Gamma^{\mathrm{pp}}_{\theta}(\omega) := \sup \{ \delta \in \mathbb{R} \; \vert \; \exists C > 0, \; \mathrm{E}_{\omega}^{\theta}(\varphi) \geq \delta ||\varphi|| - C , \; \forall \varphi \in \mathcal{H}\},
\end{equation}
varies with $\theta$ and $\omega$, where $||\varphi||$ is a suitable norm (see Section \ref{Section variational setup}). Note that $\Gamma^{\mathrm{pp}}_{\theta}(\omega) > 0$ precisely if \eqref{Equation J intro} has a solution, see \cite[Propositions 21 and 22]{CollinsGabor}. It is here useful to warn the reader that much of the literature focuses on determining when there exists a \emph{positive} constant $\delta$ as in \eqref{Equation properness intro}, but then it follows from coercivity of $\mathrm{H}_{\omega}$ and related properness conjectures (see \cite{Tian, CollinsGabor, ChenChengII}) that the J-equation and the cscK equation automatically admit solutions, so estimating the degree to which it is positive would not be very useful. Instead, the main idea of this paper is to understand precisely how negative the constant $\Gamma^{\mathrm{pp}}_{\theta}(\omega)$ is when the J-functional is \emph{not} coercive (due to connections between the J-equation and the cscK equation such an estimate would have numerous potential applications). To make this rigorous, we first note that the set of $\delta \in \mathbb{R}$ that satisfies the condition in \eqref{Equation properness intro} is non-empty, see Proposition \ref{Prop welldef}. Furthermore, the above properness depends only on the associated cohomology classes $[\theta] \in H^{1,1}(X,\mathbb{R})$ and $[\omega] \in \mathcal{C}_X$, and it is natural to expect that the stability threshold $\Gamma^{\mathrm{pp}}_{\theta}(\omega)$ descends to a function in cohomology. This is confirmed in Proposition \ref{Proposition indep of repr} and Remark \ref{Remark cohomology notation}. 

As a first main result we provide an explicit formula for the stability threshold $\Gamma^{\mathrm{pp}}_{\theta}(\omega)$ on unstable compact K\"ahler surfaces, building on the cohomological condition for solvability of the J-equation in \cite{Chen2000}: 

\begin{thm} \label{Thm main intro surfaces small version}
Let $(X,\omega)$ be a compact K\"ahler surface with discrete automorphism group, and $\theta$ an auxiliary K\"ahler form on $X$. If there is no solution $\varphi \in \mathcal{H}$ to the J-equation $\mathrm{Tr}_{\omega_{\varphi}} \theta = c$, then the stability threshold satisfies
\begin{equation} \label{Equation formula surfaces intro}
\Gamma^{\mathrm{pp}}_{\theta}(\omega) = 
2 \frac{\int_X \theta \wedge \omega}{\int_X \omega^2} - \inf \{\delta > 0 \; : \; \delta[\omega] - [\theta] > 0 \}.
\end{equation}
\end{thm}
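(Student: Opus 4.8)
The plan is to reduce the computation of $\Gamma^{\mathrm{pp}}_{\theta}(\omega)$ to a one-parameter family of \emph{solvability} questions for twisted J-equations, and then to read off the answer from the Nakai--Moishezon type description of the K\"ahler cone of a surface. The starting point is two elementary observations. First, integrating the J-equation against $\omega_{\varphi}^2$ forces its topological constant to be $c = 2\int_X\theta\wedge\omega/\int_X\omega^2$, which is exactly the first term of \eqref{Equation formula surfaces intro}. Second, Donaldson's functional is \emph{linear} in its upper index: writing $\mathrm{E}_{\omega}^{\theta} = I_{\theta} - c_{\theta}\,\mathrm{E}$, where $I_{\theta}$ is linear in $\theta$, $c_{\theta}$ is the (linear) topological constant, and $\mathrm{E}$ is the Monge--Amp\`ere energy, one obtains $\mathrm{E}_{\omega}^{\theta-\delta\omega} = \mathrm{E}_{\omega}^{\theta} - \delta\,\mathrm{E}_{\omega}^{\omega}$ for every $\delta\in\mathbb{R}$. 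Because the norm $\|\cdot\|$ is normalized so that $\mathrm{E}_{\omega}^{\omega}(\varphi)-\|\varphi\|$ is bounded (Section \ref{Section variational setup}), subtracting $\delta\|\varphi\|$ from $\mathrm{E}_{\omega}^{\theta}$ amounts to twisting the form by $-\delta\omega$, so that
$$\Gamma^{\mathrm{pp}}_{\theta}(\omega) = \sup\{\delta\in\mathbb{R} \;:\; \mathrm{E}_{\omega}^{\theta-\delta\omega} \text{ is bounded below on } \mathcal{H}\}.$$

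Next I would track how the governing cohomology class moves with $\delta$. The topological constant of $\theta-\delta\omega$ is $c_{\theta-\delta\omega}=c-2\delta$, so the class controlling solvability of the twisted equation is $c_{\theta-\delta\omega}[\omega]-[\theta-\delta\omega] = (c-\delta)[\omega]-[\theta]$. Setting $\delta_0 := \inf\{\delta>0 : \delta[\omega]-[\theta]>0\}$, the key point is that, since the K\"ahler cone is open and convex and $[\omega]$ is K\"ahler, the ray $\mu\mapsto \mu[\omega]-[\theta]$ is K\"ahler precisely for $\mu>\delta_0$ and nef precisely for $\mu\geq\delta_0$; in particular $\delta_0\in(0,\infty)$ because $[\theta]$ is K\"ahler. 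Using the Nakai--Moishezon criterion for surfaces one finds $\delta_0 = \max\big(\mu_*,\ \sup_C \int_C\theta/\int_C\omega\big)$, where $\mu_*$ is the larger root of $(\mu[\omega]-[\theta])^2=0$ and $C$ ranges over irreducible curves; this both yields the explicit rational-function description and identifies the nef threshold with the K\"ahler threshold.

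For the lower bound $\Gamma^{\mathrm{pp}}_{\theta}(\omega)\geq c-\delta_0$ I would use only the sufficiency half of Chen's criterion. If $\delta<c-\delta_0$ then $c-\delta>\delta_0$, so $(c-\delta)[\omega]-[\theta]$ is K\"ahler; hence the twisted J-equation for $\theta-\delta\omega$ is solvable, which forces $\mathrm{E}_{\omega}^{\theta-\delta\omega}$ to be bounded below and therefore places $\delta$ in the set above. Taking the supremum gives $\geq c-\delta_0$. Here the hypothesis that the original equation has \emph{no} solution is precisely the statement $c\leq\delta_0$, i.e. $c-\delta_0\leq0$, which situates us in the non-coercive regime where the threshold is non-positive.

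The main obstacle is the reverse inequality $\Gamma^{\mathrm{pp}}_{\theta}(\omega)\leq c-\delta_0$, which asks for \emph{unboundedness from below}. For $\delta>c-\delta_0$ one has $c-\delta<\delta_0$, so $(c-\delta)[\omega]-[\theta]$ is \emph{not} nef, and I would need to show that this forces $\mathrm{E}_{\omega}^{\theta-\delta\omega}$ to be unbounded below. This is the necessity direction of the cohomological criterion and the genuinely hard step: on a surface the failure of nefness is witnessed either by an irreducible curve $C$ with $\big((c-\delta)[\omega]-[\theta]\big)\cdot C<0$ or by negative self-intersection, and one must convert such an obstruction into an explicit destabilizing sequence $\varphi_j\in\mathcal{H}$ — equivalently a geodesic ray of negative asymptotic $\mathrm{E}_{\omega}^{\theta-\delta\omega}$-slope — concentrating mass along the obstructing curve. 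I would construct this either directly from potentials peaking along $C$ or by invoking the necessity of the subvariety/numerical criterion for boundedness below of the J-functional. Since $\Gamma^{\mathrm{pp}}_{\theta}(\omega)$ is a supremum, only the two open conditions $\delta<c-\delta_0$ and $\delta>c-\delta_0$ must be resolved, so the borderline class $\delta_0[\omega]-[\theta]$ (nef but not K\"ahler) may be left untouched; combining the two inequalities then yields the stated formula.
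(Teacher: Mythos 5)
Your strategy is in substance the paper's own (Theorem \ref{Theorem main for surfaces}): reduce to the pencil $[\theta]-\delta[\omega]$, use the linearity $\mathrm{E}_{\omega}^{\theta-\delta\omega}=\mathrm{E}_{\omega}^{\theta}-\delta\,\mathrm{E}_{\omega}^{\omega}$ together with $\mathrm{E}_{\omega}^{\omega}=\mathrm{I}_{\omega}-\mathrm{J}_{\omega}$ (Proposition \ref{Proposition norm equals twisted}, Lemma \ref{Lemma add omega}), and read off the answer from Chen's cohomological criterion \eqref{Equation Song-Weinkove criterion}; the paper packages the same mechanism as ``two linear functions in the twisting parameter agree at the endpoint and at a common zero, hence everywhere''. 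Your lower bound is correct, with one point you should make explicit: Chen's criterion and the solvability--coercivity equivalence (Theorem \ref{CollinsGabor properness thm}) require the twisting form to be K\"ahler, and this holds for you only because the no-solution hypothesis forces $c\le\delta_0$, so the relevant $\delta<c-\delta_0\le 0$ and $\theta-\delta\omega=\theta+|\delta|\omega$ is indeed K\"ahler.

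The genuine gap is the upper bound. Having reformulated $\Gamma^{\mathrm{pp}}_{\theta}(\omega)$ as $\sup\{\delta:\mathrm{E}_{\omega}^{\theta-\delta\omega}\text{ bounded below}\}$, you must show that $\mathrm{E}_{\omega}^{\theta-\delta\omega}$ is \emph{unbounded below} for $\delta>c-\delta_0$, and you leave this as an unproved ``hard step'', sketching destabilizing potentials concentrated along an obstructing curve. This is strictly stronger than what the quoted results give: Theorem \ref{CollinsGabor properness thm} yields ``no solution $\Rightarrow$ not coercive'', i.e.\ a threshold $\le 0$, not unboundedness from below. Moreover, on a surface the failure of nefness of $(c-\delta)[\omega]-[\theta]$ need not be witnessed by a curve at all --- it can occur purely because the self-intersection becomes negative (this happens exactly when $\delta_0$ equals your root $\mu_*$), and in that regime ``concentrating mass along the obstructing curve'' has no referent. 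The fix is to drop unboundedness entirely. First, the hypothesis plus Theorem \ref{CollinsGabor properness thm} already give $\Gamma^{\mathrm{pp}}_{\theta}(\omega)\le 0$. Next, if some $\delta\le 0$ lies in your set, then for every $\delta'<\delta$ one has $\mathrm{E}_{\omega}^{\theta-\delta'\omega}\ge(\delta-\delta')(\mathrm{I}_{\omega}-\mathrm{J}_{\omega})-C$, so $\mathrm{E}_{\omega}^{\theta-\delta'\omega}$ is coercive with $\theta-\delta'\omega$ K\"ahler; Theorem \ref{CollinsGabor properness thm} and \eqref{Equation Song-Weinkove criterion} then force $(c-\delta')[\omega]-[\theta]>0$, i.e.\ $\delta'<c-\delta_0$, and letting $\delta'\uparrow\delta$ gives $\delta\le c-\delta_0$. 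With this replacement your argument closes and coincides with the paper's proof.
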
 

\noindent In other words, either there exists a solution to the J-equation \eqref{Equation J intro}, or we can quantify explicitly how far the J-functional is from being proper. This follows because, by \cite{CollinsGabor} and by construction, the above stability threshold is positive if and only if there exists a solution to Donaldson's J-equation with respect to $\theta$ and  $\omega$ (and this is in turn equivalent to convergence of the J-flow \cite{DonaldsonJobservation,SongWeinkove,CollinsGabor}).  

For the applications to the constant scalar curvature problem that we have in mind, we in general need a version of Theorem \ref{Thm main intro surfaces small version} valid for arbitrary smooth closed $(1,1)$-forms $\theta$ on $X$ (in particular we wish to take $[\theta] = -c_1(X)$). To state it we 
introduce a cohomological condition, involving the Seshadri type constant
$$
\mathcal{T}([\theta],[\omega]) := \sup \{ \delta \in \mathbb{R} \; \vert \; [\theta] - \delta[\omega] \geq 0 \},
$$
which is often possible to compute in practice. We then obtain the following generalized version of Theorem \ref{Thm main intro surfaces small version}: 
\begin{thm} \label{Thm main intro surfaces}
Let $(X,\omega)$ be a compact K\"ahler surface with discrete automorphism group, and let $\theta$ be any smooth closed $(1,1)$-form on $X$. 
If moreover $\Gamma_{\theta}^{\mathrm{pp}}(\omega) < \mathcal{T}([\theta],[\omega])$, then \eqref{Equation formula surfaces intro} holds.
\end{thm}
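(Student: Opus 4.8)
The plan is to deduce the general statement from the already-established Kähler case, Theorem \ref{Thm main intro surfaces small version}, by translating $\theta$ in the direction of $\omega$. For $s \in \mathbb{R}$ set $\theta_s := \theta + s\omega$. The whole argument rests on one structural fact, which I would isolate as a preliminary lemma: the stability threshold is equivariant under such translations, in the precise sense that
$$\Gamma^{\mathrm{pp}}_{\theta + s\omega}(\omega) = \Gamma^{\mathrm{pp}}_{\theta}(\omega) + s \qquad \text{for all } s \in \mathbb{R}.$$
This should follow directly from the variational setup of Section \ref{Section variational setup}: Donaldson's functional is additive in its auxiliary form, so that $\mathrm{E}_{\omega}^{\theta + s\omega}(\varphi) = \mathrm{E}_{\omega}^{\theta}(\varphi) + s\,\mathrm{E}_{\omega}^{\omega}(\varphi)$, and $\mathrm{E}_{\omega}^{\omega}$ coincides with the norm $\|\cdot\|$ up to a uniformly bounded error. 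Plugging this into the definition \eqref{Equation properness intro}, the inequality $\mathrm{E}_{\omega}^{\theta_s}(\varphi) \geq \delta\|\varphi\| - C$ is equivalent to $\mathrm{E}_{\omega}^{\theta}(\varphi) \geq (\delta - s)\|\varphi\| - C'$, so the set of admissible exponents for $\theta_s$ is that for $\theta$ shifted by $s$, and the suprema differ by $s$.

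Granting the lemma, the role of the hypothesis becomes transparent. On the one hand, $[\theta_s] = [\theta] - (-s)[\omega]$ is a K\"ahler class precisely when $-s < \mathcal{T}([\theta],[\omega])$, that is, for $s > -\mathcal{T}([\theta],[\omega])$. On the other hand, by the lemma $\Gamma^{\mathrm{pp}}_{\theta_s}(\omega) \leq 0$ exactly for $s \leq -\Gamma^{\mathrm{pp}}_{\theta}(\omega)$, which by \cite{CollinsGabor} is exactly the range in which the J-equation for $\theta_s$ has no solution. These two constraints are simultaneously satisfiable --- there exists $s$ with $-\mathcal{T}([\theta],[\omega]) < s \leq -\Gamma^{\mathrm{pp}}_{\theta}(\omega)$ --- if and only if $\Gamma^{\mathrm{pp}}_{\theta}(\omega) < \mathcal{T}([\theta],[\omega])$, which is our standing assumption. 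I would therefore fix any such $s$; since by Proposition \ref{Proposition indep of repr} every quantity involved depends only on cohomology, I may replace $\theta_s$ by a genuine K\"ahler form in its class, after which $(X,\omega)$ together with $\theta_s$ satisfies the hypotheses of Theorem \ref{Thm main intro surfaces small version}.

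Applying Theorem \ref{Thm main intro surfaces small version} to $\theta_s$ yields formula \eqref{Equation formula surfaces intro} for the pair $(\theta_s,\omega)$, and it remains to transport it back. Here one checks that each term on the right-hand side of \eqref{Equation formula surfaces intro} transforms affinely under $\theta \mapsto \theta_s$: indeed $2\int_X \theta_s \wedge \omega / \int_X \omega^2 = 2\int_X \theta \wedge \omega / \int_X \omega^2 + 2s$, while $\inf\{\delta > 0 : \delta[\omega] - [\theta_s] > 0\} = s + \inf\{\delta > 0 : \delta[\omega] - [\theta] > 0\}$, so the right-hand side for $\theta_s$ exceeds that for $\theta$ by exactly $s$. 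Combined with $\Gamma^{\mathrm{pp}}_{\theta_s}(\omega) = \Gamma^{\mathrm{pp}}_{\theta}(\omega) + s$ from the lemma, the validity of \eqref{Equation formula surfaces intro} for $\theta_s$ descends to its validity for $\theta$, as desired.

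I expect two steps to carry the real weight. The first is the equivariance lemma, whose content is the comparison between $\mathrm{E}_{\omega}^{\omega}$ and the norm $\|\cdot\|$ with the correct normalization constant; everything downstream forces this constant to be exactly $1$, since otherwise the two sides of \eqref{Equation formula surfaces intro} would scale differently under translation. The second, and I think the genuine obstacle, is the bookkeeping for the threshold $\inf\{\delta > 0 : \delta[\omega] - [\theta] > 0\}$: its affine transformation law is valid only while the positivity constraint $\delta > 0$ stays inactive, i.e. while $\inf\{\delta \in \mathbb{R} : \delta[\omega] - [\theta] > 0\} \geq 0$. I would establish that the hypothesis $\Gamma^{\mathrm{pp}}_{\theta}(\omega) < \mathcal{T}([\theta],[\omega])$ confines $([\theta],[\omega])$ to precisely this regime, by locating the unstable window $[\,\mathcal{T}([\theta],[\omega]),\, \inf\{\delta \in \mathbb{R} : \delta[\omega] - [\theta] > 0\}\,]$ relative to the origin using the elementary bound $\mathcal{T}([\theta],[\omega]) \leq \int_X \theta \wedge \omega / \int_X \omega^2$ obtained by pairing with $[\omega]$.
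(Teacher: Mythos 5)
Your argument is in substance the paper's own: the proof given there (Theorem \ref{Thm main surfaces big version}, via Theorem \ref{Theorem main for surfaces}) also moves along the line through $[\theta]$ and $[\omega]$, using the segment $\theta_t=(1-t)\theta+t\omega$ and matching two linear functions at $t=1$ and at a common zero $t_0<1$; your translation $\theta_s=\theta+s\omega$ is the same one-parameter family up to positive rescaling, your equivariance lemma is exactly Lemma \ref{Lemma add omega} (with $a=1$, $b=s$, resting on Proposition \ref{Proposition norm equals twisted}), and your packaging --- choose $s$ in the window $-\mathcal{T}([\theta],[\omega])<s\le-\Gamma^{\mathrm{pp}}_{\theta}(\omega)$, invoke Proposition \ref{Proposition indep of repr} to pass to a K\"ahler representative, apply Theorem \ref{Thm main intro surfaces small version}, and transport back --- is a clean and correct reduction to the K\"ahler unstable case.

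The one sub-step that does not go through as you propose is the final reconciliation of the constrained and unconstrained infima. Your transport argument correctly yields $\Gamma^{\mathrm{pp}}_{\theta}(\omega)=2[\theta]\cdot[\omega]/[\omega]^2-\sigma$ with $\sigma:=\inf\{\delta\in\mathbb{R}:\delta[\omega]-[\theta]>0\}$ (the shifted constraint $\delta'>-s$ is indeed inactive, since $\sigma\ge\mathcal{T}([\theta],[\omega])>-s$). But the claim that the hypothesis $\Gamma^{\mathrm{pp}}_{\theta}(\omega)<\mathcal{T}([\theta],[\omega])$ forces $\sigma\ge0$ is false, and the bound $\mathcal{T}([\theta],[\omega])\le[\theta]\cdot[\omega]/[\omega]^2$ cannot deliver it: on $\mathrm{Bl}_p\mathbb{P}^2$ with $[\omega]=2H-E$ and $[\theta]=-(3H-E)$ one computes (by applying the K\"ahler case to $\theta+s\omega$ with $s\in(2,7/3]$ and shifting back) that $\Gamma^{\mathrm{pp}}_{\theta}(\omega)=-7/3<\mathcal{T}([\theta],[\omega])=-2$ while $\sigma=-1<0$, so that $\inf\{\delta>0:\delta[\omega]-[\theta]>0\}=0\neq\sigma$ and the formula with the literal constraint $\delta>0$ fails (it would give $-10/3$ instead of $-7/3$). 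This, however, is an imprecision in the statement rather than a defect of your argument: the paper's own proof works with the unconstrained quantity $\sigma([\theta],[\omega])$ of Lemma \ref{Lemma one over t} (it needs $Q(t)$ to be linear on all of $[0,1]$, which truncation at $0$ would destroy), so \eqref{Equation formula surfaces intro} should be read with $\delta$ ranging over $\mathbb{R}$; with that reading your proof closes with no further work, and you should simply drop the attempted $\sigma\ge0$ step.
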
 

\noindent We emphasize that if the above hypothesis is not satisfied, then there always
exist solutions to the J-equation (assuming that $\theta$ and $\omega$ are K\"ahler forms, since then $\mathcal{T}([\theta],[\omega]) > 0$ and $\Gamma^{\mathrm{pp}}_{\theta}(\omega) > 0$ implies existence of a solution, by \cite[Proposition 21]{CollinsGabor}). Along the same lines it follows that if $[\theta]$ is nef then the statement improves on Theorem \ref{Thm main intro surfaces small version}, but when $[\theta]$ is not semi-positive, the cohomological constant $\mathcal{T}([\theta],[\omega])$ may be negative. The obvious application is to consider $\theta := -\mathrm{Ric}(\omega)$, when it is well known that the threshold value is related to properness of the K-energy via Tian's alpha invariant $\alpha_X([\omega])$ (see \cite{Tian}), since the latter controls the entropy term $\mathrm{H}_{\omega}$ of the Mabuchi K-energy functional. As an application of Theorem \ref{Thm main intro surfaces} we thus obtain the following existence criterion for cscK metrics: 
\begin{cor} \label{Cor existence criterion intro}
Let $(X,\omega)$ be a compact K\"ahler surface with discrete automorphism group. Then $X$ admits a constant scalar curvature K\"ahler metric in $[\omega] \in H^{1,1}(X,\mathbb{R})$ if the numerical conditions
\begin{enumerate}
\item 
$$
-2 \frac{\int_X \mathrm{Ric}(\omega) \wedge \omega}{\int_X \omega^2} - \inf \{\delta : -c_1(X) - \delta[\omega] \leq 0 \} > -\frac{3}{2}\alpha_X([\omega])
$$
\item
$$
\mathcal{T}(-c_1(X),[\omega]) > -\frac{3}{2}\alpha_X([\omega])
$$
\end{enumerate}
are both satisfied.
\end{cor}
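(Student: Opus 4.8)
The plan is to derive Corollary \ref{Cor existence criterion intro} from Theorem \ref{Thm main intro surfaces} applied to the special form $\theta := -\mathrm{Ric}(\omega)$, combined with the Chen-Tian decomposition $\mathrm{M}_{\omega}^{\theta} = \mathrm{E}_{\omega}^{\theta} + \mathrm{H}_{\omega}$ and the standard coercivity estimate for the entropy term $\mathrm{H}_{\omega}$ in terms of Tian's alpha invariant. The key mechanism is that coercivity of the Mabuchi K-energy implies existence of a cscK metric (by the Chen-Cheng theorem cited in the introduction), so it suffices to show that the hypotheses (1) and (2) force the combined functional $\mathrm{E}_{\omega}^{\theta} + \mathrm{H}_{\omega}$ to be coercive.

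First I would record the quantitative lower bound for the entropy term: there is a constant such that $\mathrm{H}_{\omega}(\varphi) \geq \tfrac{3}{2}\alpha_X([\omega]) \, \|\varphi\| - C'$ for all $\varphi \in \mathcal{H}$, where the coefficient $\tfrac{3}{2}\alpha_X([\omega])$ is exactly the slope contribution coming from Tian's alpha invariant in complex dimension two (the factor $3/2 = (n+1)/n$ with $n=2$ is what produces the constant appearing in the statement). Combined with the definition \eqref{Equation properness intro} of the stability threshold, this gives
\begin{equation*}
\mathrm{M}_{\omega}^{\theta}(\varphi) = \mathrm{E}_{\omega}^{\theta}(\varphi) + \mathrm{H}_{\omega}(\varphi) \geq \left( \Gamma_{\theta}^{\mathrm{pp}}(\omega) + \tfrac{3}{2}\alpha_X([\omega]) \right) \|\varphi\| - C''.
\end{equation*}
Thus the Mabuchi K-energy is coercive as soon as $\Gamma_{\theta}^{\mathrm{pp}}(\omega) > -\tfrac{3}{2}\alpha_X([\omega])$ with $\theta = -\mathrm{Ric}(\omega)$, so the whole task reduces to showing the two numerical hypotheses guarantee precisely this lower bound on the stability threshold.

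Next I would split into two cases according to whether the hypothesis $\Gamma_{\theta}^{\mathrm{pp}}(\omega) < \mathcal{T}([\theta],[\omega])$ of Theorem \ref{Thm main intro surfaces} holds. If it does, Theorem \ref{Thm main intro surfaces} applies with $\theta = -\mathrm{Ric}(\omega)$ and gives the explicit formula \eqref{Equation formula surfaces intro}; substituting $[\theta] = -c_1(X)$ and unwinding the definition of the infimum term (noting $\inf\{\delta > 0 : \delta[\omega] - [\theta] > 0\} = \inf\{\delta : -c_1(X) - \delta[\omega] \leq 0\}$, up to the sign conventions for positivity), the right-hand side of \eqref{Equation formula surfaces intro} is exactly the left-hand side of condition (1). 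Hence condition (1) reads precisely $\Gamma_{\theta}^{\mathrm{pp}}(\omega) > -\tfrac{3}{2}\alpha_X([\omega])$ and we are done. If instead the hypothesis of Theorem \ref{Thm main intro surfaces} fails, i.e. $\Gamma_{\theta}^{\mathrm{pp}}(\omega) \geq \mathcal{T}([\theta],[\omega]) = \mathcal{T}(-c_1(X),[\omega])$, then condition (2) directly yields $\Gamma_{\theta}^{\mathrm{pp}}(\omega) \geq \mathcal{T}(-c_1(X),[\omega]) > -\tfrac{3}{2}\alpha_X([\omega])$, again the desired bound. Either way the required coercivity estimate holds.

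The main obstacle I anticipate is not the logical structure, which is a clean case analysis, but rather pinning down the precise constant and sign conventions so that the entropy estimate really produces the coefficient $\tfrac{3}{2}\alpha_X([\omega])$ and so that the infimum term in condition (1) matches the quantity $\inf\{\delta > 0 : \delta[\omega] - [\theta] > 0\}$ appearing in \eqref{Equation formula surfaces intro} after the substitution $[\theta] = -c_1(X)$. In particular one must verify that the alpha-invariant bound on $\mathrm{H}_{\omega}$ is stated with respect to the same normalization of the norm $\|\cdot\|$ used in the definition of $\Gamma_{\theta}^{\mathrm{pp}}(\omega)$, and that the relevant Chen-Cheng properness-implies-existence statement applies under the discrete automorphism hypothesis. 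Once these normalizations are reconciled, the corollary follows immediately from the two cases above.
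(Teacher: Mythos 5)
Your proposal matches the paper's own proof essentially step for step: the same entropy bound $\mathrm{H}_{\omega}(\varphi) \geq \tfrac{3}{2}\alpha_X([\omega])(\mathrm{I}_{\omega}-\mathrm{J}_{\omega})(\varphi) - C$ (which the paper takes from Song--Weinkove), the same reduction via the Chen--Tian decomposition and the Chen--Cheng theorem to showing $\Gamma^{\mathrm{pp}}_{-\mathrm{Ric}(\omega)}(\omega) > -\tfrac{3}{2}\alpha_X([\omega])$, and the same two-case split according to whether $\Gamma^{\mathrm{pp}}_{-\mathrm{Ric}(\omega)}(\omega) < \mathcal{T}(-c_1(X),[\omega])$, using condition (1) with Theorem \ref{Thm main intro surfaces} in one case and condition (2) directly in the other. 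The argument is correct as written.
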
 

\noindent 
This sufficient condition should be compared to other properness criteria using the alpha invariant, see for instance \cite{FLSW, LiShiYao, Dervanalphainvariant}, as well as \cite[Corollary 1.5]{ChenChengII} and references therein (in particular, the above result implies the criteria of Li-Shi-Yao \cite{LiShiYao} and Dervan \cite{Dervanalphainvariant} in the case of surfaces). Higher dimensional analogues may also be obtained, using Theorem \ref{Thm formula higher dimension intro} below, in which case the obtained criterion a priori appears to be completely different to the previously known ones. 

A main point to emphasize is that the expression in Theorem \ref{Thm main intro surfaces} is often explicitly computable in practice, given a good enough understanding of the K\"ahler cone. For explicit computations, see Section \ref{Section examples}, where we revisit J. Ross' slope unstable examples for products of smooth irreducible projective curves $X = C \times C$ of genus $g \geq 2$, see \cite{Rossproductofcurves}. More generally, consider $[\theta] \in \mathcal{C}_X$, let $a \in \partial \mathcal{C}_X$ be a nef but not K\"ahler class, and let $\omega_t$, $t \in [0,1]$, be any K\"ahler forms satisfying $[\omega_t] := (1-t)a + t[\theta]$,  such that $[\omega_t]$ defines a line segment in $H^{1,1}(X,\mathbb{R})$ joining $[\theta]$ to the boundary of the K\"ahler cone at the point $a$. 
Then the cohomological quantity $\Gamma_{\theta}^{\mathrm{pp}}(\omega_t)$ is well-defined for $t > 0$, and the formula \eqref{Equation formula surfaces intro} reduces to the rational function
$$
\Gamma_{\theta}^{\mathrm{pp}}(\omega_t) = 
%2 \frac{[\theta] \cdot [\omega_t]}{[\omega_t]^2} - t^{-1}
2 \frac{\int_X \theta \wedge \omega}{\int_X \omega^2} - t^{-1}
$$
under the same conditions as in Theorem \ref{Thm main intro surfaces}. In fact, it is interesting to note that the typical behaviour seems to be that the stability threshold tends to minus infinity as the underlying K\"ahler class $[\omega]$ approaches the boundary of the K\"ahler cone, see also Example \ref{Example blowup}.

\subsection{A formula in higher dimension for manifolds satisfying the Lejmi-Sz\'ekelyhidi conjecture} 

It is natural to ask about generalizations of Theorem \ref{Thm main intro surfaces} in higher dimension. In order to obtain such results we consider the case of compact K\"ahler manifolds $X$ satisfying the Lejmi-Sz\'ekelyhidi conjecture. In what follows, consider the cohomological constant 
$$
C_{\theta,\omega} := 
n \frac{\int_X \theta \wedge \omega^{n-1}}{\int_X \omega^n},
$$
which coincides with the constant in \eqref{Equation J intro}. 

\begin{conj} \label{Conjecture Collins-Szekelyhidi} \emph{\cite{CollinsGabor, LejmiGabor}} Suppose that $(X,\omega)$ is a compact K\"ahler manifold with $\theta$ an auxiliary K\"ahler form on $X$. Then there exists a solution to equation \eqref{Equation J intro} if and only if 
$$
C_{\theta,\omega} \int_V \omega^p > p \int_V \theta \wedge \omega^{p-1}
$$
for every subvariety $V \subset X$ of dimension $p \leq n -1$.
\end{conj}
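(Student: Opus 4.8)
The plan is to treat the two implications separately; necessity is elementary linear algebra followed by integration, while sufficiency carries all the weight. For necessity, suppose $\omega_\varphi = \omega + \sqrt{-1}\partial\bar\partial\varphi \in \mathcal{H}$ solves $\mathrm{Tr}_{\omega_\varphi}\theta = c$ with $c = C_{\theta,\omega}$, fix a smooth point $x$ of a $p$-dimensional subvariety $V$, and diagonalise $\theta$ with respect to $\omega_\varphi$ at $x$ with eigenvalues $\lambda_1 \geq \cdots \geq \lambda_n > 0$ (positive because $\theta > 0$); the equation reads $\sum_i \lambda_i = c$. Restricting both forms to the tangent space $T_xV$ and letting $\nu_1 \geq \cdots \geq \nu_p$ denote the eigenvalues of $\theta|_V$ relative to $\omega_\varphi|_V$, the Cauchy (min--max) interlacing principle gives $\nu_k \leq \lambda_k$, whence pointwise on $V_{\mathrm{reg}}$
\[
p\,\theta\wedge\omega_\varphi^{p-1}\big|_V \;=\; \Big(\textstyle\sum_{k=1}^{p}\nu_k\Big)\omega_\varphi^{p}\big|_V \;\leq\; \Big(\textstyle\sum_{k=1}^{p}\lambda_k\Big)\omega_\varphi^{p}\big|_V \;<\; c\,\omega_\varphi^{p}\big|_V,
\]
the final strict inequality being forced by the positivity of the omitted $\lambda_{p+1},\dots,\lambda_n$. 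Integrating over $V$ and using that the integrals depend only on $[\theta]$ and $[\omega]$ yields $c\int_V\omega^p > p\int_V\theta\wedge\omega^{p-1}$; the case $p=n$ is excluded precisely because there the inequality degenerates to the defining equality $c\int_X\omega^n = n\int_X\theta\wedge\omega^{n-1}$.

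For the converse I would factor the problem through the subsolution formalism. By Song--Weinkove and the a priori estimates of Collins--Sz\'ekelyhidi \cite{SongWeinkove, CollinsGabor}, equation \eqref{Equation J intro} is solvable as soon as there exists a $\mathcal{C}$-\emph{subsolution}, namely a form $\omega' \in [\omega]$ with $c\,\omega'^{n-1} - (n-1)\theta\wedge\omega'^{n-2} > 0$ pointwise; moreover the existence of such an $\omega'$ is in fact equivalent to solvability, since a genuine solution is itself a subsolution. The conjecture therefore reduces entirely to the purely cohomological implication that numerical positivity on all subvarieties of dimension $p \leq n-1$ forces the existence of a pointwise subsolution on $X$. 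This is a ``reverse Nakai--Moishezon'' statement of exactly the shape proved by Demailly--Paun for the K\"ahler cone, and I would attack it by contradiction: assuming no subsolution exists, run a continuity method (or the J-flow) whose breakdown produces a sequence of metrics degenerating in a controlled way.

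The heart of the argument is then a mass-concentration step. From the degenerating sequence one extracts a nonzero closed positive current; analysing its Lelong super-level sets, applying a Siu-type decomposition, and passing to a resolution of singularities should isolate an irreducible subvariety $V \subset X$ along which the mass concentrates. Tracking the intersection numbers carried by the limit current, one aims to show that such concentration along $V$ is quantitatively incompatible with $c\int_V\omega^p > p\int_V\theta\wedge\omega^{p-1}$, producing the desired contradiction; an induction on $\dim X$ fits naturally here, since the hypothesis restricted to a candidate $V$ is again a numerical condition of the same type in lower dimension. I expect this extraction-and-intersection step to be the decisive obstacle: as in Demailly--Paun, converting a concentrated current into an honest subvariety that carries the correct cohomological defect demands delicate control of Lelong numbers and of the singularities of the limiting current, and it is precisely this analytic input---absent from the elementary necessity direction---that keeps the statement at the level of a conjecture in the present framework.
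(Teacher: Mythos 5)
The statement you are proving is stated in the paper as a \emph{conjecture}: the paper offers no proof of it, only citations — it is known for toric manifolds by \cite{CollinsGabor}, and a uniform version was later established by G.~Chen \cite{GaoChen} — so there is no in-paper argument to compare against. Judged on its own terms, your proposal establishes only the easy half. The necessity direction is essentially correct and standard (it is the computation of Lejmi--Sz\'ekelyhidi): diagonalising $\theta$ against $\omega_\varphi$, using Cauchy interlacing for the compression to $T_xV$, and noting that the omitted eigenvalues are strictly positive does give the pointwise strict inequality $p\,\theta\wedge\omega_\varphi^{p-1}|_V < c\,\omega_\varphi^p|_V$ on $V_{\mathrm{reg}}$, and integration (over the regular locus, with the usual Lelong-type justification that these integrals compute the cohomological intersection numbers even when $V$ is singular) yields the numerical inequality.

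The sufficiency direction, which is the entire content of the conjecture, is not proved. Your reduction to the existence of a $\mathcal{C}$-subsolution via Song--Weinkove and Collins--Sz\'ekelyhidi is a legitimate and standard first step, but the remaining implication — that positivity of $C_{\theta,\omega}\int_V\omega^p - p\int_V\theta\wedge\omega^{p-1}$ on all proper subvarieties forces a pointwise subsolution — is precisely the open problem, and your treatment of it is a program rather than an argument. The mass-concentration step is described only in aspirational terms (``should isolate an irreducible subvariety,'' ``one aims to show''): you do not specify which continuity method or degeneration you run, what positive current is extracted, why its Siu decomposition produces a subvariety rather than a diffuse residual part, or how the intersection numbers of the limit are related to the numerical hypothesis. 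These are exactly the points where the known proofs (Demailly--P\u{a}un for the K\"ahler cone, and G.~Chen's argument for the uniform J-positivity condition) require genuinely new estimates, and you acknowledge yourself that this input is absent. So the proposal should be read as a correct proof of one implication plus a plausible but unexecuted strategy for the other; it does not settle the statement.
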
 

\noindent This conjecture was proven for toric manifolds in \cite{CollinsGabor}. Assuming the Lejmi-Sz\'ekelyhidi conjecture we now obtain the following explicit formula generalizing the one for surfaces (the right hand side can in particular be seen to be finite, see Lemma \ref{Lemma inf is finite}): 

\begin{thm} \label{Thm formula higher dimension intro}
Suppose that $X$ is a compact K\"ahler manifold such that the Lejmi-Sz\'ekelyhidi conjecture holds \emph{(e.g. $X$ toric)}, and suppose that  $([\theta], [\omega]) \in H^{1,1}(X,\mathbb{R}) \times \mathcal{C}_X$ with $\Gamma_{\theta}^{\mathrm{pp}}(\omega) < \mathcal{T}([\theta],[\omega]).$ Then 
the stability threshold satisfies
\begin{equation} \label{Equation higher dim formula intro}
\Gamma^{\mathrm{pp}}_{\theta}(\omega) = 
\inf_{V} \frac{C_{\theta,\omega} \int_V \omega^p - p \int_V \theta \wedge \omega^{p-1}}{{(n-p) \int_V \omega^p}},
\end{equation}
where the infimum is taken over all subvarieties $V \subset X$ of dimension $p \leq n - 1$. 
Moreover, the infimum is then achieved by a subvariety $V_{min} \subset X$.
\end{thm}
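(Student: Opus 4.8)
The plan is to collapse the two-parameter problem into a one-parameter family by replacing $\theta$ with $\theta - \delta\omega$, and then to play two descriptions of solvability of the associated J-equation against each other. Throughout, write $\Gamma^{\mathrm{alg}}_{\theta}(\omega)$ for the right-hand side of \eqref{Equation higher dim formula intro}, which is finite by Lemma \ref{Lemma inf is finite}. First I would record a \emph{shift identity} for the stability threshold. Since Donaldson's functional is linear in its form argument, and since the norm is normalized (as is standard in this circle of ideas) so that $||\varphi|| = \mathrm{E}_{\omega}^{\omega}(\varphi)$, i.e.\ so that Donaldson's functional with $\theta = \omega$ recovers Aubin's J-functional, one obtains
$$
\mathrm{E}_{\omega}^{\theta-\delta\omega}(\varphi) = \mathrm{E}_{\omega}^{\theta}(\varphi) - \delta\, ||\varphi||, \qquad \varphi \in \mathcal{H}.
$$
Reading this against the definition \eqref{Equation properness intro}, a constant $\gamma$ is admissible for $\theta-\delta\omega$ if and only if $\gamma+\delta$ is admissible for $\theta$, so that $\Gamma^{\mathrm{pp}}_{\theta-\delta\omega}(\omega) = \Gamma^{\mathrm{pp}}_{\theta}(\omega) - \delta$ for every $\delta \in \mathbb{R}$. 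Note that everything descends to cohomology by Proposition \ref{Proposition indep of repr}, so $\mathrm{E}_{\omega}^{\theta-\delta\omega}$ and its threshold are defined even when $\theta-\delta\omega$ is not K\"ahler.

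Next I would carry out the cohomological bookkeeping. Using the elementary identity $C_{\theta-\delta\omega,\omega} = C_{\theta,\omega} - n\delta$, a direct rearrangement shows that the Lejmi--Sz\'ekelyhidi inequality for $\theta-\delta\omega$ at a fixed $p$-dimensional subvariety $V$, namely $C_{\theta-\delta\omega,\omega}\int_V \omega^p > p\int_V (\theta-\delta\omega)\wedge\omega^{p-1}$, is equivalent (using $\int_V \omega^p > 0$ and $n-p > 0$) to
$$
\delta < \frac{C_{\theta,\omega}\int_V \omega^p - p\int_V \theta\wedge\omega^{p-1}}{(n-p)\int_V\omega^p}.
$$
Hence these inequalities hold simultaneously for \emph{every} subvariety $V$ of dimension $p \leq n-1$ precisely when $\delta < \Gamma^{\mathrm{alg}}_{\theta}(\omega)$.

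Now I would combine the two descriptions. The role of the hypothesis $\Gamma^{\mathrm{pp}}_{\theta}(\omega) < \mathcal{T}([\theta],[\omega])$ is to keep us in the range where $[\theta]-\delta[\omega]$ is a K\"ahler class: for $\delta < \mathcal{T}([\theta],[\omega])$ the class $[\theta]-\delta[\omega]$ is a sum of a nef class and a K\"ahler class, hence K\"ahler, so that both \cite[Proposition 21]{CollinsGabor} (solvability of the J-equation is equivalent to positivity of the threshold) and the Lejmi--Sz\'ekelyhidi conjecture apply to $\theta-\delta\omega$. Chaining the equivalences, for every $\delta < \mathcal{T}([\theta],[\omega])$,
$$
\delta < \Gamma^{\mathrm{pp}}_{\theta}(\omega) \iff \Gamma^{\mathrm{pp}}_{\theta-\delta\omega}(\omega) > 0 \iff \text{J-eq}(\theta-\delta\omega)\ \text{solvable} \iff \delta < \Gamma^{\mathrm{alg}}_{\theta}(\omega).
$$
Thus the half-lines $\{\delta < \Gamma^{\mathrm{pp}}_{\theta}(\omega)\}$ and $\{\delta < \Gamma^{\mathrm{alg}}_{\theta}(\omega)\}$ coincide on $(-\infty, \mathcal{T}([\theta],[\omega]))$; since $\Gamma^{\mathrm{pp}}_{\theta}(\omega) < \mathcal{T}([\theta],[\omega])$, an elementary comparison (if the two thresholds differed, a $\delta$ lying strictly between them and below $\mathcal{T}([\theta],[\omega])$ would violate the equivalence) forces $\Gamma^{\mathrm{pp}}_{\theta}(\omega) = \Gamma^{\mathrm{alg}}_{\theta}(\omega)$, which is the asserted formula. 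For the \emph{moreover} clause I would set $\delta := \Gamma^{\mathrm{pp}}_{\theta}(\omega) = \Gamma^{\mathrm{alg}}_{\theta}(\omega) < \mathcal{T}([\theta],[\omega])$: then $\Gamma^{\mathrm{pp}}_{\theta-\delta\omega}(\omega) = 0$, so the J-equation for the K\"ahler class $[\theta]-\delta[\omega]$ is \emph{not} solvable, and the Lejmi--Sz\'ekelyhidi conjecture produces a subvariety $V$ failing the strict inequality; by the computation above its ratio is $\leq \delta = \Gamma^{\mathrm{alg}}_{\theta}(\omega)$, and minimality forces equality, so $V_{min} := V$ attains the infimum.

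The main obstacle is precisely the control of positivity: the two external inputs (Collins--Sz\'ekelyhidi and the Lejmi--Sz\'ekelyhidi conjecture) are only available for \emph{K\"ahler} $\theta-\delta\omega$, so the entire argument is confined to the window $\delta < \mathcal{T}([\theta],[\omega])$, and it is exactly the hypothesis $\Gamma^{\mathrm{pp}}_{\theta}(\omega) < \mathcal{T}([\theta],[\omega])$ that places the sought threshold inside this admissible window. A secondary technical point, which I would verify carefully in Section \ref{Section variational setup}, is to confirm the normalization making $||\cdot|| = \mathrm{E}_{\omega}^{\omega}$, so that the shift identity holds with coefficient exactly $\delta$ and the reduction is sharp rather than merely up to a constant.
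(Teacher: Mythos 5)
Your proposal is correct and is essentially the paper's own argument in a different parametrization: the paper runs the segment $\theta_s=(1-s)\theta+s\omega$ and matches two linear functions at $s=1$ and at the point where both thresholds vanish, which via Lemma \ref{Lemma add omega} is exactly your shift identity $\Gamma^{\mathrm{pp}}_{\theta-\delta\omega}(\omega)=\Gamma^{\mathrm{pp}}_{\theta}(\omega)-\delta$ combined with the same two external inputs (\cite[Propositions 21--22]{CollinsGabor} and the Lejmi--Sz\'ekelyhidi conjecture), and the "moreover" clause is extracted at the vanishing point in both proofs. The only nitpick is that ``the inequalities hold for every $V$ precisely when $\delta<\Gamma^{\mathrm{alg}}_{\theta}(\omega)$'' is an equivalence only once one knows the infimum is attained (a priori one only gets $\delta\leq\Gamma^{\mathrm{alg}}_{\theta}(\omega)$), but the two one-directional implications already yield both inequalities between the thresholds, so the argument closes.
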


\begin{rem} 
While this paper was already in preparation a proof of a uniform version of the Lejmi-Sz\'ekelyhidi conjecture was made available by G. Chen \cite{GaoChen}. 
Following their result, we can in fact show that the first part of Theorem \ref{Thm formula higher dimension intro} holds for all compact K\"ahler manifolds with discrete automorphism group, without additional assumptions. Note however that the stronger (non-uniform) statement of the Lejmi-Sz\'ekelyhidi conjecture is needed to guarantee that the infimum is achieved in general, see Remark \ref{Remark inf achieved}.
\end{rem}

\noindent Note that if $X$ is toric then it suffices to test for the (finite number of) toric subvarieties. In general, however, it is helpful to know that the infimum above is in fact achieved, bringing us closer to actually applying the formula for the optimal lower bound in practice.
Characterizing the minimizing subvariety $V_{min} \subset X$ in practice is an important question, related to the circle of ideas surrounding 'optimal degenerations' (geodesic rays or test configurations) in the literature on the Yau-Tian-Donaldson conjecture, which links existence of constant scalar curvature K\"ahler metrics to the algebro-geometric K-stability notion. 

\subsection{Examples and explicit characterization of J-stable classes on surfaces}

A further motivation for the present project is its applications the constant scalar curvature problem, in particular to the discussion on \emph{Calabi dream manifolds}, revived in the recent work of X.X. Chen and J. Cheng \cite{ChenChengII}. 
To put this into context, consider first the case of a compact K\"ahler manifold $X$ with $-c_1(X) > 0$. By analogy with the K\"ahler-Einstein problem and the Aubin-Yau theorem, it was initially suspected that the K-energy should always be proper. This was however shown by J. Ross to be false, by providing examples of products of smooth curves $X = C \times C$ of genus $g \geq 2$ where the twisted K-energy functional is not always proper, see \cite{Rossproductofcurves}. On the other hand, it was shown by Chen-Cheng \cite{ChenChengII}, building on Song-Weinkove \cite{SongWeinkove}, that if $X$ is a compact K\"ahler surface of general type that admits no curve of negative self-intersection, then the energy part of the K-energy is in fact proper over every K\"ahler class. Even though the original expectation of Calabi that every compact K\"ahler manifold is a Calabi Dream Manifold is not correct, it would be important to clarify how ``far'' his vision is from being true. Generally, one would like to understand the picture of the K\"ahler cone; what classes satisfy what stability notions, and what are their relation. Moreover, note that if we can understand precisely what minimal models are Calabi dream manifolds, then 
it would significantly improve our understanding also of constant scalar curvature polarizations on arbitrary compact K\"ahler surfaces, using the blow up results of Arezzo-Pacard \cite{ArezzoPacard}.

As an application of the continuity that follows from the formula in Theorem \ref{Thm main intro surfaces} we now point out a generalization of a result of X.X. Chen and J. Cheng on Calabi dream surfaces. They showed, building on an observation of Donaldson \cite{DonaldsonJobservation}, that every compact K\"ahler manifold with $c_1(X) < 0$ and no curves of negative self-intersection are Calabi dream manifolds. In particular, for such manifolds the cone $\mathrm{Big}_X$ of big $(1,1)$-cohomology classes  equals the K\"ahler cone. In fact, we observe that if $\theta$ is a fixed K\"ahler form, then the $J_{\theta,\omega}$-equation \eqref{Equation J intro} admits a solution in every K\"ahler class $[\omega]$ (for every $\omega \in [\omega]$) if and only if $\mathrm{Big}_X = \mathcal{C}_X$. Moreover, if $\mathrm{Big}_X \neq \mathcal{C}_X$, then it is possible to determine precisely which K\"ahler classes admit a solution: 
 
\begin{thm} \label{Thm application perfect intro}
Let $X$ be a compact K\"ahler surface with discrete automorphism group, with $\eta$ a smooth closed $(1,1)$-form on $X$ such that either $[\eta] \in \mathcal{C}_X$ or $[\eta] = 0$. If $-c_1(X) + [\eta] \geq 0$, then the following holds:   
\begin{enumerate}
\item If $\mathrm{Big}_X = \mathcal{C}_X$, then the $J_{\theta,\omega}$-equation admits a solution for every pair of K\"ahler forms $(\theta,\omega)$ on $X$, and there exists an $\eta$-twisted cscK metric in every K\"ahler class $[\omega] \in \mathcal{C}_X$. 
\item If $\mathrm{Big}_X \neq \mathcal{C}_X$, let $\theta$ be a fixed K\"ahler form on $X$, and suppose that $a \in \partial \mathcal{C}_X$ is a nef but not K\"ahler class which is normalized such that $a^2 = [\theta]^2$. Let $\omega_t$ be any K\"ahler forms on $X$ such that $[\omega_t] := (1-t)a + t[\theta]$ for $t \in (0,1)$. Then the $\mathrm{J}_{\theta,\omega}$-equation \eqref{Equation J intro} admits a solution  
precisely if $[\omega]$ belongs to the subcone 
 $
 \{ \lambda [\omega_t]: \lambda > 0, \; t \in (1/2,1] \} \subset \mathcal{C}_X.
 $
\end{enumerate}   

\end{thm}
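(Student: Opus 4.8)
The plan is to reduce solvability of the J-equation to positivity of the stability threshold and then to evaluate the latter with the explicit formula of Theorem \ref{Thm main intro surfaces small version}. Throughout I would use the equivalence from \cite{CollinsGabor}, recorded in the introduction, that the $J_{\theta,\omega}$-equation admits a solution precisely when $\Gamma^{\mathrm{pp}}_\theta(\omega) > 0$, together with the fact that for K\"ahler $\theta$ the non-solvable regime is governed by
$$
\Gamma^{\mathrm{pp}}_\theta(\omega) = C_{\theta,\omega} - \delta_0(\theta,\omega), \qquad \delta_0(\theta,\omega) := \inf\{\delta > 0 \;:\; \delta[\omega] - [\theta] > 0\},
$$
where $C_{\theta,\omega} = 2\int_X\theta\wedge\omega/\int_X\omega^2$. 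The key geometric input is that on a surface the class $\delta[\omega]-[\theta]$ leaves the K\"ahler cone through its boundary exactly at $\delta = \delta_0$, so that $\alpha_0 := \delta_0[\omega]-[\theta]$ is nef but not K\"ahler.

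For part (1) I would argue by contradiction: suppose the equation is not solvable for some K\"ahler pair $(\theta,\omega)$. Then $\Gamma^{\mathrm{pp}}_\theta(\omega)\le 0$, so the formula gives $C_{\theta,\omega}\le\delta_0$. Since $\mathrm{Big}_X = \mathcal{C}_X$, the boundary class $\alpha_0$ cannot be big, whence $\alpha_0^2 = 0$ (it is nef, so $\alpha_0^2\ge 0$). This forces $\delta_0$ to equal the larger root $\delta_+$ of the quadratic $\delta\mapsto(\delta[\omega]-[\theta])^2$ (the discriminant being positive by the Hodge index theorem unless $[\theta],[\omega]$ are proportional, a trivial case), and since $[\theta]^2 > 0$ one computes $C_{\theta,\omega}-\delta_+ > 0$, contradicting $C_{\theta,\omega}\le\delta_0$. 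Hence the equation is solvable for every K\"ahler $(\theta,\omega)$. For the twisted cscK statement I would set $[\theta] := -c_1(X)+[\eta]\ge 0$ and use the Chen--Tian decomposition $\mathrm{M}^\eta_\omega = \mathrm{E}^\theta_\omega + \mathrm{H}_\omega$: the same threshold computation, now in the nef and possibly non-K\"ahler case where $\mathrm{Big}_X=\mathcal{C}_X$ forces $[\theta]^2 = 0$ on the boundary, shows that $\mathrm{E}^\theta_\omega$ is bounded below, and combining this with coercivity of the entropy $\mathrm{H}_\omega$ and the properness criterion of Chen--Cheng \cite{ChenChengII} yields an $\eta$-twisted cscK metric in every K\"ahler class.

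For part (2) I first note that solvability for fixed $\theta$ is invariant under scaling $[\omega]\mapsto\lambda[\omega]$, so it suffices to analyze the segment $[\omega_t]$. The crucial step is to identify $\delta_0(\theta,\omega_t)$ explicitly. Since $a\in\partial\mathcal{C}_X$ is nef, not K\"ahler, with $a^2 = [\theta]^2 > 0$, the K\"ahler-cone characterization on surfaces provides a curve $C_0$ with $a\cdot C_0 = 0$ and $C_0^2 < 0$. Pairing $\delta[\omega_t]-[\theta] = \delta(1-t)a + (\delta t - 1)[\theta]$ with $C_0$ gives $(\delta t - 1)([\theta]\cdot C_0)$, which is negative for $\delta < 1/t$ and positive for $\delta > 1/t$; since moreover $\delta[\omega_t]-[\theta]$ is a positive combination of the nef class $a$ and the K\"ahler class $[\theta]$, hence K\"ahler, for $\delta > 1/t$, I conclude $\delta_0(\theta,\omega_t) = 1/t$. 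Substituting this and $C_{\theta,\omega_t}$ into the solvability criterion $C_{\theta,\omega_t} > \delta_0$, the normalization $a^2 = [\theta]^2$ makes the cross-term $a\cdot[\theta]$ cancel and reduces the inequality to $t^2 > (1-t)^2$, i.e. $t > 1/2$; the converse implication $t \le 1/2 \Rightarrow$ non-solvability follows from the failure of the cohomological condition of \cite{Chen2000} tested against $C_0$. Scaling back recovers the subcone $\{\lambda[\omega_t] : \lambda > 0,\ t\in(1/2,1]\}$.

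The main obstacle will be the twisted cscK assertion in part (1): establishing properness of $\mathrm{M}^\eta_\omega$ in every K\"ahler class requires controlling the entropy term $\mathrm{H}_\omega$ together with the only marginally positive $\mathrm{E}^\theta_\omega$ when $[\theta]$ is nef but not K\"ahler, and this is where the hypotheses $-c_1(X)+[\eta]\ge 0$ and $[\eta]\in\mathcal{C}_X$ (or $[\eta]=0$), combined with $\mathrm{Big}_X=\mathcal{C}_X$, must be used in full. By contrast, the purely cohomological solvability statements in both parts amount to bookkeeping of the explicit formula once $\delta_0$ has been correctly identified.
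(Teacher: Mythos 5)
Your proposal is correct and follows essentially the same route as the paper: both reduce to Chen's cohomological criterion $C_{\theta,\omega}[\omega]-[\theta]>0$ (equivalently the explicit threshold formula), identify $\inf\{\delta: \delta[\omega_t]-[\theta]>0\}=1/t$ from the boundary structure of the K\"ahler cone on a surface, observe that the normalization $a^2=[\theta]^2$ reduces solvability to $t>1/2$, and handle the twisted cscK assertion via the Chen--Tian decomposition, properness of the entropy, and the Chen--Cheng criterion (the paper phrases the nef case as an $\epsilon$-perturbation $\rho_\epsilon=\rho+\epsilon[\omega]$ combined with Lemma \ref{Lemma add omega}). The only imprecision is that your threshold computation yields $\Gamma^{\mathrm{pp}}_{\theta}(\omega)\geq 0$ rather than a genuine lower bound on $\mathrm{E}^{\theta}_{\omega}$ itself, but this is exactly what is needed once it is combined with the strict coercivity of $\mathrm{H}_{\omega}$, so the argument goes through unchanged.
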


\begin{rem} 
\noindent Taking $[\eta] = 0$, the statement $(1)$ extends \cite[Corollary 1.7]{ChenChengII} to the case when $-c_1(X)$ is nef.
\end{rem}

\noindent 
It is moreover interesting to note the following consequence of the explicit formula in Theorem \ref{Thm main intro surfaces small version}, which gives insight about the geometry of the set of J-stable K\"ahler classes: 

\begin{cor}
For each fixed $[\theta] \in \mathcal{C}_X$ the set of K\"ahler classes $[\omega] \in \mathcal{C}_X$ that admits a solution to \eqref{Equation J intro} is open, connected and star convex. 
\end{cor}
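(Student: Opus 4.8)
The plan is to pass to cohomology and exploit the solvability criterion underlying Theorem \ref{Thm main intro surfaces small version}. Write $S_\theta$ for the set of $[\omega] \in \mathcal{C}_X$ such that the $J$-equation \eqref{Equation J intro} is solvable. Since $\Gamma^{\mathrm{pp}}_\theta(\omega) > 0$ is equivalent to solvability, the surface case of the Lejmi--Sz\'ekelyhidi conjecture (Chen's cohomological condition) together with the Nakai--Moishezon criterion for K\"ahler surfaces identifies
\[
S_\theta = \{[\omega] \in \mathcal{C}_X : \alpha([\omega]) \in \mathcal{C}_X\}, \quad \alpha([\omega]) := c([\omega])[\omega] - [\theta], \quad c([\omega]) := 2\frac{\int_X \theta \wedge \omega}{\int_X \omega^2}.
\]
Writing $\cdot$ for the intersection pairing on $H^{1,1}(X,\mathbb{R})$, two elementary identities drive everything: a direct computation gives $\alpha([\omega])^2 = \theta^2 > 0$ and $\alpha([\omega]) \cdot \omega = \theta \cdot \omega > 0$ for every $[\omega]$, and $\alpha$ is invariant under positive rescaling of $[\omega]$. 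Openness is then immediate, as $\alpha$ is continuous and $S_\theta = \alpha^{-1}(\mathcal{C}_X) \cap \mathcal{C}_X$ is the preimage of the open K\"ahler cone. The natural centre for star-convexity is $[\theta]$ itself: it lies in $S_\theta$ because $\alpha([\theta]) = 2[\theta] - [\theta] = [\theta]$, equivalently $\varphi = 0$ already solves $\mathrm{Tr}_\theta \theta = 2$. Connectedness (indeed contractibility) will follow for free once star-convexity is established.

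For star-convexity I would fix $[\omega] \in S_\theta$ and study the segment $[\omega_t] := (1-t)[\theta] + t[\omega]$, $t \in [0,1]$, which lies in $\mathcal{C}_X$ by convexity of the K\"ahler cone. If $[\theta]$ and $[\omega]$ are proportional the claim is trivial by scale-invariance, so I assume them independent and set $P := \mathrm{span}([\theta],[\omega])$, a plane on which the intersection form has signature $(1,1)$ by the Hodge index theorem. The key structural observation is that $\alpha([\omega_t]) \in P$ and $\alpha([\omega_t])^2 = \theta^2 > 0$ for all $t$, so the path $t \mapsto \alpha([\omega_t])$ is confined to the conic $\mathcal{Q} := \{\beta \in P : \beta^2 = \theta^2\}$, a hyperbola with two branches; since the branches are disconnected and both endpoints $\alpha([\theta]) = [\theta]$ and $\alpha([\omega])$ satisfy $\beta \cdot \theta > 0$, continuity confines the whole path to the single branch $\ell$ through $[\theta]$. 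This reduces the problem to a one-dimensional statement on the curve $\ell$.

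The conclusion then rests on two facts about $\ell$. First, $t \mapsto \alpha([\omega_t])$ is injective: an equality $\alpha([\omega_{t_1}]) = \alpha([\omega_{t_2}])$ forces $c([\omega_{t_1}])[\omega_{t_1}] = c([\omega_{t_2}])[\omega_{t_2}]$, making the distinct line-points $[\omega_{t_1}], [\omega_{t_2}]$ positive multiples of one another; as this line does not pass through the origin, they must coincide. A continuous injection into $\ell \cong \mathbb{R}$ is monotone, so the image of $[0,1]$ is exactly the sub-arc of $\ell$ joining $[\theta]$ to $\alpha([\omega])$. Second, for each irreducible curve $C$ the set $\{\beta \in \ell : \beta \cdot C > 0\}$ is connected: the zero locus $\{\beta \cdot C = 0\} \cap P$ is a line through the origin, which meets a branch of a central conic in at most one point, so $\beta \cdot C$ changes sign at most once along $\ell$. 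Invoking Nakai--Moishezon once more (recall $\beta^2 = \theta^2 > 0$ and $\beta \cdot \theta > 0$ throughout $\ell$), $\mathcal{C}_X \cap \ell = \bigcap_C \{\beta \cdot C > 0\} \cap \ell$ is an intersection of sub-arcs of the line $\ell$, hence connected. Since the two endpoints lie in $\mathcal{C}_X \cap \ell$, so does the sub-arc between them; by monotonicity this sub-arc is precisely $\{\alpha([\omega_t]) : t \in [0,1]\}$, whence every $[\omega_t] \in S_\theta$, proving star-convexity with centre $[\theta]$.

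I expect the main obstacle to be exactly the danger that the path $\alpha([\omega_t])$ leaves $\mathcal{C}_X$ for intermediate $t$ even though both endpoints lie in it --- equivalently, that the indefinite quadratic $t \mapsto (c([\omega_t])[\omega_t] - [\theta]) \cdot C$ develops a sign change inside $(0,1)$. This is precisely what the combination of monotonicity (injectivity of $\alpha$) and the ``at most one sign change'' property (lines through the origin meeting a branch of $\mathcal{Q}$ once) is designed to exclude; arranging for these two geometric inputs to interact correctly, rather than any single computation, is where the care will be needed.
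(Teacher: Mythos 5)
Your argument is correct, and it reaches the conclusion by a genuinely different route than the paper. The paper treats this corollary as a consequence of the explicit threshold formula of Theorem \ref{Thm main intro surfaces small version} together with the normalized boundary-segment analysis of Section \ref{Section main result surfaces} and Theorem \ref{Thm application perfect intro}: there one rescales a nef boundary class $a$ so that $a^2=[\theta]^2$, computes that solvability along $[\omega_t]=(1-t)a+t[\theta]$ occurs exactly for $t>1/2$, and obtains the solvable locus as a union over boundary classes of subcones $\{\lambda[\omega_t]:\lambda>0,\ t\in(1/2,1]\}$, each star-shaped about $[\theta]$, with openness coming from continuity of the rational expression \eqref{Equation formula surfaces intro}. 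You instead bypass the threshold formula entirely and argue directly from Chen's criterion $C_{\theta,\omega}[\omega]-[\theta]>0$, packaging it into the scale-invariant map $\alpha$ whose image lies on the fixed hyperbola $\{\beta^2=\theta^2\}$ inside the Hodge-index plane $\mathrm{Span}([\theta],[\omega])$, and then testing K\"ahlerness of $\alpha([\omega_t])$ curve by curve via the Nakai--Moishezon criterion (Buchdahl--Lamari in the non-projective K\"ahler case). What this buys is independence from the normalization trick and from the formula for $\Gamma^{\mathrm{pp}}$; what it costs is the extra conic geometry. One step should be made explicit: from ``the line $\{\beta\in P:\beta\cdot C=0\}$ meets the branch $\ell$ in at most one point'' you conclude that $\{\beta\in\ell:\beta\cdot C>0\}$ is a sub-arc, but a function vanishing at a single point could a priori be positive on both sides of it, leaving $\ell$ minus a point, which is disconnected and would break the subsequent intersection argument. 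This is excluded because the tangent line to $\{\beta^2=\theta^2\}$ at a point $p$ is $\{\beta:\beta\cdot p=\theta^2\}$, which does not pass through the origin since $\theta^2>0$; hence the line $\{\beta\cdot C=0\}$ is transverse to $\ell$ at its unique intersection point and $\beta\cdot C$ genuinely changes sign there. With that one line added, the proof is complete.
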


\noindent Finally, let $X$ be a Calabi dream surface, and consider $\pi: Y_p \rightarrow X$ the blowup of $X$ at a point $p \in X$. Since blowups admit an exceptional curve, it follows from the above and Arezzo-Pacard \cite{ArezzoPacard} that in any such example there are K\"ahler classes on $X$ that are K-stable but not J-stable. A similar phenomenon can be illustrated concretely in the examples studied by J. Ross in \cite{Rossproductofcurves}, see Section \ref{Section examples}. 

\medskip

\subsection{A formula for algebraic thresholds and applications} The above results can also be proven on the side of algebraic thresholds, related to uniform J-stability, essentially with the same techniques. We state our results using the generalized formalism for test configurations that was introduced in \cite{SD1,DervanRoss}, which yields a J-stability notion which in the projective case coincides with that of Lejmi-Sz\'ekelyhidi \cite{LejmiGabor}. In this terminology, consider the norm of test configuration given by 
$$
||(\mathcal{X},\mathcal{A})|| := \lim_{t \rightarrow +\infty} t^{-1} \mathrm{E}_{\omega}^{\omega}(\varphi_t),
$$
where $(\mathcal{X},\mathcal{A})$ is a test configuration for $(X,[\omega])$ in the sense of \cite{SD1, DervanRoss}. Furthermore, we may define an \emph{algebraic stability threshold} by 
$$
\Delta^{\mathrm{pp}}_{\theta}(\omega) := \sup \{ \delta \in \mathbb{R} \; \vert \; \mathbf{E}^{\theta}_{\omega}(\mathcal{X},\mathcal{A}) \geq \delta ||(\mathcal{X},\mathcal{A})|| \} 
$$
$$
= \inf_{||(\mathcal{X},\mathcal{A})|| = 1} \mathbf{E}^{\theta}_{\omega}(\mathcal{X},\mathcal{A}),
$$
see Section \ref{Section optimal degeneration} for definitions and details. We then note that the optimal lower bound in Theorem \ref{Thm main intro surfaces small version} is in fact achieved also on the side of the non-Archimedean Donaldson J-functional along test configurations: 

\begin{thm} \label{Thm thresholds coincide intro}
Under the conditions of Theorem \ref{Thm formula higher dimension intro} %\emph{(assuming that the Lejmi-Sz\'ekelyhidi conjecture holds)} 
the analytic and algebraic stability thresholds coincide, i.e.
$$ 
\Delta^{\mathrm{pp}}_{\theta}(\omega) = \Gamma_{\theta}^{\mathrm{pp}}(\omega).
$$
%for every $\beta \in H^{1,1}(X,\mathbb{R})$. 
In particular, we have the formula
$$
\Delta^{\mathrm{pp}}_{\theta}(\omega) = 
\inf_{V} \frac{C_{\theta,\omega} \int_V \omega^p - p\int_V \theta \wedge \omega^{p-1}}{{(n-p) \int_V \omega^p}},
$$ 
where the infimum is taken over all subvarieties $V \subset X$ of dimension $p \leq n - 1$.
\end{thm}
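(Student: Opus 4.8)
The plan is to prove that the two thresholds coincide by establishing the two inequalities separately, after which the displayed formula is immediate from Theorem~\ref{Thm formula higher dimension intro}, whose hypotheses are in force. Throughout I would work with the correspondence between test configurations $(\mathcal{X},\mathcal{A})$ and the (weak) geodesic rays $\{\varphi_t\}_{t \geq 0}$ they induce from a fixed base potential, together with the slope formulae
\[
\mathbf{E}^{\theta}_{\omega}(\mathcal{X},\mathcal{A}) = \lim_{t\to+\infty} t^{-1}\mathrm{E}^{\theta}_{\omega}(\varphi_t), \qquad ||(\mathcal{X},\mathcal{A})|| = \lim_{t\to+\infty} t^{-1}||\varphi_t||,
\]
which identify the non-Archimedean J-functional and the norm of a test configuration with the respective asymptotic slopes of their Archimedean counterparts along the associated ray. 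The second identity requires that the analytic norm $||\varphi||$ be normalized so as to agree asymptotically with $\mathrm{E}^{\omega}_{\omega}$ along rays, matching the definition $||(\mathcal{X},\mathcal{A})|| = \lim_t t^{-1}\mathrm{E}^{\omega}_{\omega}(\varphi_t)$.

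First I would prove $\Gamma^{\mathrm{pp}}_{\theta}(\omega) \leq \Delta^{\mathrm{pp}}_{\theta}(\omega)$. Suppose $\delta$ satisfies the coercivity bound $\mathrm{E}^{\theta}_{\omega}(\varphi) \geq \delta||\varphi|| - C$ for all $\varphi \in \mathcal{H}$ and some $C>0$. Evaluating along the geodesic ray $\varphi_t$ attached to an arbitrary test configuration, dividing by $t$ and letting $t \to +\infty$, the additive constant contributes $Ct^{-1}\to 0$ and the slope formulae give $\mathbf{E}^{\theta}_{\omega}(\mathcal{X},\mathcal{A}) \geq \delta||(\mathcal{X},\mathcal{A})||$. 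Hence every $\delta$ admissible for $\Gamma^{\mathrm{pp}}_{\theta}(\omega)$ is admissible for $\Delta^{\mathrm{pp}}_{\theta}(\omega)$, and taking suprema yields the inequality. This is the routine ``geometric stability implies algebraic stability'' direction; the content lies in the reverse one.

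For $\Delta^{\mathrm{pp}}_{\theta}(\omega) \leq \Gamma^{\mathrm{pp}}_{\theta}(\omega)$ I would exhibit a (near-)optimal test configuration. By Theorem~\ref{Thm formula higher dimension intro} the analytic threshold equals the infimum over subvarieties and, crucially, this infimum is \emph{achieved} by some $V_{min} \subset X$ of dimension $p$ --- this is exactly where the hypothesis $\Gamma^{\mathrm{pp}}_{\theta}(\omega) < \mathcal{T}([\theta],[\omega])$ and the non-uniform Lejmi--Sz\'ekelyhidi conjecture enter. I would take the deformation to the normal cone of $V_{min}$, producing a family of test configurations $(\mathcal{X},\mathcal{A}_c)$ indexed by the coefficient $c$ of the exceptional divisor, Kähler for small $c>0$, and compute the relevant intersection numbers so that $\mathbf{E}^{\theta}_{\omega}(\mathcal{X},\mathcal{A}_c)$ and $||(\mathcal{X},\mathcal{A}_c)||$ become explicit polynomials in $c$. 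The claim is that the normalized quotient tends, as $c \to 0^+$, to precisely $\frac{C_{\theta,\omega}\int_{V_{min}}\omega^p - p\int_{V_{min}}\theta\wedge\omega^{p-1}}{(n-p)\int_{V_{min}}\omega^p} = \Gamma^{\mathrm{pp}}_{\theta}(\omega)$; since $\Delta^{\mathrm{pp}}_{\theta}(\omega)$ is bounded above by this quotient for each $c>0$, passing to the limit forces $\Delta^{\mathrm{pp}}_{\theta}(\omega) \leq \Gamma^{\mathrm{pp}}_{\theta}(\omega)$.

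The main obstacle is this second inequality, and within it two technical ingredients. The first is establishing the slope formulae with enough regularity of the only weak geodesic rays to legitimately pass to the limit in both the numerator and the norm. The second, and the true heart of the argument, is the explicit intersection-theoretic expansion of $\mathbf{E}^{\theta}_{\omega}$ and of the norm along the deformation to the normal cone, where I expect both quantities to vanish to the same order in $c$ and their leading coefficients to reproduce exactly the Seshadri-type quotient attached to $V_{min}$; verifying that this leading-order computation matches the expression of Theorem~\ref{Thm formula higher dimension intro} is the delicate step. Once both inequalities are in hand they combine to give $\Delta^{\mathrm{pp}}_{\theta}(\omega) = \Gamma^{\mathrm{pp}}_{\theta}(\omega)$, and substituting the formula of Theorem~\ref{Thm formula higher dimension intro} yields the displayed expression for $\Delta^{\mathrm{pp}}_{\theta}(\omega)$.
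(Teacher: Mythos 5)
Your two-inequality strategy is sound and would yield the theorem, but it is genuinely different from --- and considerably heavier than --- the argument the paper actually uses. Your easy direction $\Gamma^{\mathrm{pp}}_{\theta}(\omega)\le\Delta^{\mathrm{pp}}_{\theta}(\omega)$ (restrict coercivity to the ray, divide by $t$, let $t\to\infty$) is indeed routine, modulo extending the coercivity inequality from $\mathcal{H}$ to the finite-energy rays attached to test configurations. For the reverse inequality you propose the full intersection-theoretic expansion of $\mathbf{E}^{\theta}_{\omega}$ and of the norm along the deformation to the normal cone of $V_{min}$, checking that both vanish to the same order in $c$ and that the ratio of leading coefficients reproduces the quotient $\bigl(C_{\theta,\omega}\int_{V}\omega^{p}-p\int_{V}\theta\wedge\omega^{p-1}\bigr)/\bigl((n-p)\int_{V}\omega^{p}\bigr)$; this is essentially the Lejmi--Sz\'ekelyhidi computation and is delicate, in particular when $V_{min}$ is singular. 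The paper sidesteps all of this by running the same linearity trick as in Theorem \ref{Thm formula higher dimension}: along $[\theta_s]=(1-s)[\theta]+s[\omega]$ both $s\mapsto\Gamma^{\mathrm{pp}}_{\theta_s}(\omega)$ and $s\mapsto\Delta^{\mathrm{pp}}_{\theta_s}(\omega)$ are linear for $s\le 1$ (Lemma \ref{Lemma piecewise linear} and its non-Archimedean analogue), both equal $1$ at $s=1$, and --- since under the Lejmi--Sz\'ekelyhidi conjecture positivity of either threshold is equivalent to solvability of the $J_{\theta_s,\omega}$-equation --- both vanish at the same $s_0<1$, whose existence is guaranteed by the hypothesis $\Gamma^{\mathrm{pp}}_{\theta}(\omega)<\mathcal{T}([\theta],[\omega])$. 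Two linear functions agreeing at two points agree everywhere; setting $s=0$ concludes. What your route buys is an explicit (near-)optimal destabilizer and independence from the linearity formalism; what the paper's buys is brevity and complete avoidance of the normal-cone expansion, at the price of invoking the conjecture on both the analytic and the algebraic side. If you pursue your version, the two points to nail down are the extension of the coercivity bound to weak rays and the exact matching of leading coefficients in $c$ for possibly singular $V_{min}$.
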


\noindent In other words, the infimum is realized by the test configurations given by the degeneration to the normal cone of $V \subset X$ (and it is therefore not surprising that the analytic and algebraic thresholds coincide, in view of Theorem \ref{Thm formula higher dimension intro}). 
We moreover note that in view of \cite{SD3} it follows that the restrictions of the stability threshold functions $\Gamma_{\theta}^{\mathrm{pp}}(\omega)$ and $\Delta^{\mathrm{pp}}_{\theta}(\omega)$ to the open subset 
$$
\mathcal{V}_X := \{([\theta],[\omega]) \in H^{1,1}(X,\mathbb{R}) \times \mathcal{C}_X \; : \; \Gamma^{\mathrm{pp}}_{\theta}(\omega) < \mathcal{T}([\theta],[\omega]) \}
$$ 
of $H^{1,1}(X,\mathbb{R}) \times \mathcal{C}_X$ are upper semi-continuous. 
Since it is strongly expected that $\Gamma_{\theta}^{\mathrm{pp}}(\omega)$ is lower semi-continuous, this would lead to a proof of continuity of both thresholds, valid also in higher dimension. As previously noted, in the case of surfaces the thresholds are even rational functions. As a consequence we can therefore confirm in this case that the closure of the set of J-stable classes equals the set of J-semistable classes, see Corollary \ref{Cor closure}. 

\subsection{Organization of the paper } 
In Section 2 we introduce analytic preliminaries and definitions of energy functionals and stability thresholds. In Section 3 we establish a variety of properties of the introduced stability thresholds and prove several of our main results: The explicit formula for surfaces is proven in Section \ref{Section main result surfaces} (see Theorems \ref{Theorem main for surfaces} and \ref{Thm main surfaces big version}). The main results in higher dimension is proven in Section \ref{Section higher dim}. In Section \ref{Section examples} we discuss examples and prove Theorem \ref{Thm application perfect intro} (see Theorem \ref{Theorem application 3}). We also discuss applications to twisted constant scalar curvature metrics, and the existence criterion for cscK metrics (Corollary \ref{Cor existence criterion intro}) is proven in Corollary \ref{Corollary existence of cscK criterion}. Finally, in Section \ref{Section optimal degeneration} we study analogous algebraic stability thresholds and prove Theorem \ref{Thm thresholds coincide intro}.

We caution the reader that certain statements in Sections 3-5 should be interpreted from the point of view of cohomology, as explained in Remark \ref{Remark cohomology notation} and Notation \ref{Remark notation}. 

\bigskip

\section{Preliminaries}

\bigskip

\subsection{Variational setup} \label{Section variational setup}
We employ the standard variational setup that is frequently used throughout the K\"ahler geometry literature. To introduce our notation, let $(X,\omega)$ be a compact K\"ahler manifold of complex dimension $n \geq 2$ and write $[\omega]\in H^{1,1}(X,\mathbb{R})$ for the associated K\"ahler class. Let $$V_{\omega} := \int_X \frac{\omega^n}{n!}$$ be the K\"ahler volume of $(X,\omega)$. Let $\mathrm{Ric}(\omega)$ be the Ricci curvature form, normalized such that $[\mathrm{Ric}(\omega)] = c_1(X)$, and write $S(\omega) := \mathrm{Tr}_{\omega}\mathrm{Ric}(\omega)$ for the scalar curvature of $(X,\omega)$.  
Denote the automorphism group of $X$ by $\mathrm{Aut}(X)$ and its connected component of the identity by $\mathrm{Aut}_0(X)$. Write $\mathcal{C}_X \subset H^{1,1}(X,\mathbb{R})$ for the cone of K\"ahler cohomology classes on $X$. Let $\overline{\mathcal{C}}_X$ be the nef cone, $\partial \mathcal{C}_X$ its boundary, and let $\mathrm{Big}_X$ be the cone of big $(1,1)$-classes on $X$.  

We write $(\mathcal{H},d_1)$ for the space of K\"ahler potentials on $X$ endowed with the $L^1$-Finsler metric $d_1$, and denote by $(\mathcal{E}^1,d_1)$ its metric completion (see \cite{Darvas14, Darvas15, Darvassurvey, Darvas, BBGZ, BDLconvexity} and references therein). Write $\mathrm{PSH}(X,\omega) \cap L^{\infty}(X)$ for the space of bounded $\omega$-psh functions on $X$.

Now consider $\varphi \in \mathrm{PSH}(X,\omega) \cap L^{\infty}(X)$. We may define well-known energy functionals
$$
\mathrm{I}_{\omega}(\varphi) := \frac{1}{V_{\omega}n!} \int_X \varphi (\omega^n - \omega_{\varphi}^n)
$$ 
$$
\mathrm{J}_{\omega}(\varphi) = \frac{1}{V_{\omega}n!}\int_X \varphi \omega^n - \frac{1}{V_{\omega}(n+1)!} \int_X \varphi \sum_{j = 0}^n \omega^j \wedge \omega_{\varphi}^{n-j}
$$
$$
\mathrm{E}_{\omega}^{\theta}(\varphi) := \frac{1}{V_{\omega}n!} \int_X \varphi \sum_{j = 0}^{n-1} \theta \wedge \omega^j \wedge \omega_{\varphi}^{n-j-1} - \frac{1}{V_{\omega}(n+1)!}\int_X \varphi \underline{\theta} \sum_{j = 0}^{n} \omega^j \wedge \omega_{\varphi}^{n-j}   
$$
where $\theta$ is any smooth closed $(1,1)$-form on $X$ and $\underline{\theta}$ is the topological constant given by 
$$
\underline{\theta} := 
\frac{\int_X \theta \wedge \frac{\omega^{n-1}}{(n-1)!}}{\int_X \frac{\omega^n}{n!}}.
$$ 
By the Chen-Tian formula \cite{Chen2000} the K-energy functional can be written as the sum of an energy/pluripotential part and an entropy part as
\begin{equation} \label{Equation K-energy}
\mathrm{M}_{\omega} = \mathrm{E}_{\omega}^{-\mathrm{Ric}(\omega)} + \mathrm{H}_{\omega}
\end{equation}
where 
$$
\mathrm{H}_{\omega}(\varphi) := \frac{1}{V_{\omega}n!} \int_X \log \left( \frac{\omega_{\varphi}^n}{\omega^n} \right) \omega_{\varphi}^n
$$
is the relative entropy of the probability measures $\omega_{\varphi}^n/V_{\omega}$ and $\omega^n/V_{\omega}$. In particular, it is well known that $\mathrm{H}_{\omega}(\varphi)$ is always non-negative. 

For any given smooth closed $(1,1)$-form $\theta$ on $X$ we also consider the $\theta$-twisted K-energy functional
$$
\mathrm{M}_{\omega}^{\theta} := \mathrm{M}_{\omega} + \mathrm{E}_{\omega}^{\theta}.
$$
In this paper it will be convenient to measure properness of the K-energy against the functional
$$
(\mathrm{I}_{\omega} - \mathrm{J}_{\omega})(\varphi) = \frac{1}{V_{\omega}(n+1)!} \int_X  \varphi \sum_{j=0}^n \omega^j \wedge \omega_{\varphi}^{n-j} - \frac{1}{V_{\omega}n!} \int_X \varphi \omega_{\varphi}^n
$$
rather than against the usual Aubin $\mathrm{J}$-functional or the $d_1$-distance introduced in \cite{Darvas14}. The following is a standard definition: 

\begin{mydef} \label{Definition properness}
Let $\mathrm{F}: \mathcal{H} \rightarrow \mathbb{R}$ be any of the above considered energy functionals. We then say that $\mathrm{F}$ is coercive if 
$$
\mathrm{F}(\varphi) \geq \delta (\mathrm{I}_{\omega} - \mathrm{J}_{\omega})(\varphi) - C
$$
for some $\delta, C > 0$ and all $\varphi \in \mathcal{H}$.
\end{mydef}

\noindent It is well known that the resulting coercivity notion is equivalent to the other commonly seen coercivity notions in the literature. Indeed, the $(\mathrm{I}_{\omega} - \mathrm{J}_{\omega})$-functional is comparable to $\mathrm{J}_{\omega}$, since we have
$$
\frac{1}{n} \mathrm{J}_{\omega} \leq \mathrm{I}_{\omega} - \mathrm{J}_{\omega} \leq n \mathrm{J}_{\omega},
$$
see \cite[Lemma 6.19, Remark 6.20]{Tianlecturenotes}. The above functionals are moreover comparable to $d_1(0,\varphi)$ (see e.g. \cite{Darvassurvey}) and in addition we have the following relationship between $\mathrm{I}_{\omega} - \mathrm{J}_{\omega}$ and the $E_{\omega}^{\theta}$-functionals: 

\begin{prop} \label{Proposition norm equals twisted} For any compact K\"ahler manifold $(X,\omega)$, and any $\varphi \in \mathcal{H}$, we have
$$
\mathrm{E}_{\omega}^{\omega}(\varphi) = (\mathrm{I}_{\omega} - \mathrm{J}_{\omega})(\varphi).
$$
\end{prop}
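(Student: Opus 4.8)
The plan is to verify the identity $\mathrm{E}_{\omega}^{\omega}(\varphi) = (\mathrm{I}_{\omega} - \mathrm{J}_{\omega})(\varphi)$ by direct substitution of $\theta = \omega$ into the defining formula for $\mathrm{E}_{\omega}^{\theta}$ and comparing term-by-term with the explicit expression for $\mathrm{I}_{\omega} - \mathrm{J}_{\omega}$ already recorded in the excerpt. When $\theta = \omega$, the topological constant $\underline{\theta}$ becomes $\underline{\omega} = \left(\int_X \omega^n/(n-1)!\right)/\left(\int_X \omega^n/n!\right) = n$, and the first sum $\sum_{j=0}^{n-1} \theta \wedge \omega^j \wedge \omega_{\varphi}^{n-j-1}$ collapses to $\sum_{j=0}^{n-1} \omega^{j+1} \wedge \omega_{\varphi}^{n-j-1} = \sum_{k=1}^{n} \omega^k \wedge \omega_{\varphi}^{n-k}$. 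So the first step is simply to carry out these two reductions carefully and write down what $\mathrm{E}_{\omega}^{\omega}(\varphi)$ becomes.

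\medskip

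Next I would reconcile the resulting expression with the target. After substituting $\underline{\theta} = n$, the second term of $\mathrm{E}_{\omega}^{\omega}$ is $-\frac{n}{V_{\omega}(n+1)!}\int_X \varphi \sum_{j=0}^{n} \omega^j \wedge \omega_{\varphi}^{n-j}$, which already matches (up to the factor $n$) the first term in the displayed formula for $\mathrm{I}_{\omega} - \mathrm{J}_{\omega}$; note that $n/(n+1)! = 1/(n \cdot (n-1)! ) \cdot \tfrac{n}{n+1}$ requires care, so the bookkeeping of factorials is where one must be attentive. For the first term, the key observation is that the index-shifted sum $\sum_{k=1}^{n}\omega^k \wedge \omega_{\varphi}^{n-k}$ equals $\sum_{k=0}^{n}\omega^k \wedge \omega_{\varphi}^{n-k} - \omega_{\varphi}^{n}$, i.e.\ it is the full symmetric sum minus the pure $\omega_{\varphi}^n$ term. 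This is precisely what produces the two pieces $\frac{1}{V_\omega (n+1)!}\int_X \varphi \sum_{j=0}^n \omega^j \wedge \omega_\varphi^{n-j}$ and $-\frac{1}{V_\omega n!}\int_X \varphi \, \omega_\varphi^n$ appearing in $\mathrm{I}_\omega - \mathrm{J}_\omega$, once the constants are matched.

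\medskip

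I expect the only real obstacle to be a purely algebraic one: tracking the normalizing factorial constants $1/(V_\omega n!)$ versus $1/(V_\omega (n+1)!)$ through the substitution and the index shift, and confirming that the coefficients of the two surviving integrals $\int_X \varphi \sum_j \omega^j \wedge \omega_\varphi^{n-j}$ and $\int_X \varphi\, \omega_\varphi^n$ come out exactly as in the definition of $\mathrm{I}_\omega - \mathrm{J}_\omega$. There is no analytic content here — the identity holds at the level of the integrands for any $\varphi \in \mathrm{PSH}(X,\omega)\cap L^\infty(X)$, and in particular for smooth $\varphi \in \mathcal{H}$ — so once the combinatorial matching is done the proof is complete. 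A clean way to present it is to compute $\mathrm{E}_\omega^\omega(\varphi)$ after substitution in a single display and observe that it is literally the same expression as the one given for $(\mathrm{I}_\omega - \mathrm{J}_\omega)(\varphi)$, so that the equality is an identity of definitions rather than an estimate.
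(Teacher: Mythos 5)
Your proposal is correct and follows exactly the paper's own argument: substitute $\theta=\omega$, compute $\underline{\omega}=n$, shift the index in the first sum to get $\sum_{k=1}^{n}\omega^{k}\wedge\omega_{\varphi}^{n-k}$, split off the $\omega_{\varphi}^{n}$ term, and match the factorial constants against the displayed formula for $(\mathrm{I}_{\omega}-\mathrm{J}_{\omega})(\varphi)$. The bookkeeping works out as you anticipate (e.g.\ $\tfrac{1}{n!}=\tfrac{n+1}{(n+1)!}$ and $\tfrac{n+1}{(n+1)!}-\tfrac{n}{(n+1)!}=\tfrac{1}{(n+1)!}$), so there is nothing to add.
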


\begin{proof}
In this situation we have
$$
\underline{\omega} = n,
$$
and hence rearranging terms yields
\begin{equation*}
\begin{split}
\mathrm{E}_{\omega}^{\omega}(\varphi) & = \frac{1}{V_{\omega}n!} \int_X \varphi \sum_{j = 0}^{n-1} \omega^{j+1} \wedge \omega_{\varphi}^{n-j-1} - \frac{n}{V_{\omega}(n+1)!}\int_X \varphi \sum_{j = 0}^{n} \omega^j \wedge \omega_{\varphi}^{n-j}  \\
& = \frac{n+1}{V_{\omega}(n+1)!} \int_X \varphi \sum_{l = 1}^{n} \omega^{l} \wedge \omega_{\varphi}^{n-l} - \frac{n}{V_{\omega}(n+1)!}\int_X \varphi \sum_{l = 0}^{n} \omega^l \wedge \omega_{\varphi}^{n-l}  \\ 
& = \frac{1}{V_{\omega}(n+1)!} \int_X  \varphi \sum_{l=1}^n \omega^l \wedge \omega_{\varphi}^{n-l} - \frac{n}{V_{\omega}(n+1)!} \int_X  \varphi \omega_{\varphi}^n \\
& = \frac{1}{V_{\omega}(n+1)!} \int_X  \varphi \sum_{l=0}^n \omega^l \wedge \omega_{\varphi}^{n-l} - \frac{1}{V_{\omega} n!} \int_X  \varphi \omega_{\varphi}^n  \\
& = (\mathrm{I}_{\omega} - \mathrm{J}_{\omega})(\varphi).\\
\end{split}
\end{equation*}
\end{proof}

\noindent The above picture is introduced with the following deep result in mind, relating coercivity to existence of constant scalar curvature K\"ahler metrics:

\begin{thm} \label{Theorem ChenCheng main} \emph{(Main theorem of \cite{ChenChengII}, and \cite{BDL})} Let $(X,\omega)$ be a compact K\"ahler manifold and $\theta$ a K\"ahler form on $X$. Then there exists a $\theta$-twisted cscK metric in $[\omega] \in H^{1,1}(X,\mathbb{R})$, i.e. a solution $\varphi \in \mathcal{H}$ to the equation
\begin{equation}\label{Equation twisted cscK}
-\mathrm{Ric}(\omega_{\varphi}) \wedge \omega_{\varphi}^{n-1} + \theta \wedge \omega_{\varphi}^{n-1} = \lambda_{\theta,\omega} \omega_{\varphi}^n
\end{equation}
$$
\lambda_{\theta,\omega} = \frac{(-c_1(X) + [\theta]) \cdot [\omega]^{n-1}}{[\omega]^n}
$$
if and only if $\mathrm{M}_{\omega}^{\theta}$ is coercive. 
\end{thm}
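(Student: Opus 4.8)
The plan is to establish the two implications by completely different methods, reflecting the fact that this is a deep equivalence combining the variational theory of \cite{BDL} with the a priori estimates of \cite{ChenChengI, ChenChengII}. Throughout I would treat the twisting by the K\"ahler form $\theta$ as a favorable perturbation: since $\mathrm{E}_{\omega}^{\theta}$ is convex along geodesics and contributes positively, it only strengthens coercivity and introduces no essential new difficulty in the estimates.

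For the direction ``existence $\Rightarrow$ coercivity'' I would argue variationally. First invoke convexity of $\mathrm{M}_{\omega}^{\theta} = \mathrm{M}_{\omega} + \mathrm{E}_{\omega}^{\theta}$ along the finite-energy geodesics of the metric completion $(\mathcal{E}^1, d_1)$: convexity of the Mabuchi functional $\mathrm{M}_{\omega}$ is the hard input of Berman--Berndtsson \cite{BB}, while convexity of $\mathrm{E}_{\omega}^{\theta}$ for $\theta$ K\"ahler is elementary. A $\theta$-twisted cscK metric is then a critical point, hence by convexity a global minimizer of $\mathrm{M}_{\omega}^{\theta}$ over $\mathcal{E}^1$. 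The structural result I would take from \cite{BDL} is that every finite-energy minimizer is in fact a smooth solution of \eqref{Equation twisted cscK}, and that, because $X$ has discrete automorphism group, the minimizer is unique. Finally I would upgrade uniqueness of the minimizer to coercivity by a compactness-and-convexity argument in the style of Darvas--Rubinstein \cite{DR}: were coercivity to fail, one would produce a sequence $\varphi_j$ with $(\mathrm{I}_{\omega} - \mathrm{J}_{\omega})(\varphi_j) \to +\infty$ along which $\mathrm{M}_{\omega}^{\theta}$ grows sublinearly; normalizing by $d_1$-length and passing to the geodesic rays issuing from the minimizer, convexity and $\mathcal{E}^1$-compactness force a limiting ray along which $\mathrm{M}_{\omega}^{\theta}$ is nonincreasing, contradicting uniqueness of the minimizer.

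For the converse ``coercivity $\Rightarrow$ existence'' I would run a continuity method along a path of twisted equations interpolating between a trivially solvable equation and \eqref{Equation twisted cscK}. Openness is routine: since $X$ has discrete automorphism group, the linearized Lichnerowicz-type operator is invertible and the implicit function theorem applies. The entire weight of the proof lies in closedness, i.e.\ uniform a priori estimates, and this is where I expect the main obstacle to be. Here I would exploit coercivity exactly as in \cite{ChenChengII}: along the path the inequality $\mathrm{M}_{\omega}^{\theta}(\varphi_t) \geq \delta(\mathrm{I}_{\omega} - \mathrm{J}_{\omega})(\varphi_t) - C$, together with control of the non-entropy pieces $\mathrm{E}_{\omega}^{-\mathrm{Ric}(\omega)}$ and $\mathrm{E}_{\omega}^{\theta}$, forces a uniform bound on the entropy $\mathrm{H}_{\omega}(\varphi_t)$. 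The crucial step is to convert this entropy bound into a uniform $C^0$ estimate on $\varphi_t$; from there the twisted cscK equation yields a second-order (Laplacian) estimate on $\omega_{\varphi_t}$, after which Evans--Krylov and Schauder bootstrapping supply uniform higher-order bounds and hence the compactness needed to close the path.

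The genuine difficulty, as indicated, is the passage from an entropy bound to the $C^0$ and second-order estimates for a fully nonlinear fourth-order equation; this is precisely the technical heart of \cite{ChenChengI, ChenChengII} and cannot be reduced to routine computation. By contrast, the variational direction, though also deep, is conceptually cleaner once the regularity of weak minimizers from \cite{BDL} is taken as a black box. Since the present paper only uses this equivalence as an input, I would ultimately cite \cite{ChenChengII} and \cite{BDL} for the two directions rather than reprove them.
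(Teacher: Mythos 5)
The paper offers no proof of this statement at all: it is imported verbatim as the main theorem of \cite{ChenChengII} combined with \cite{BDL}, so your decision to ultimately cite those references rather than reprove the equivalence is exactly what the paper does, and your sketch of the two directions (the variational argument via convexity along finite-energy geodesics and the Darvas--Rubinstein properness principle for ``existence $\Rightarrow$ coercivity'', and the continuity method with entropy-to-$C^0$-to-Laplacian a priori estimates for the converse) accurately reflects how those works argue. One small correction: the theorem as stated assumes no discreteness of $\mathrm{Aut}(X)$ --- since $\theta$ is a genuine K\"ahler form, it is the strict convexity contributed by $\mathrm{E}_{\omega}^{\theta}$ along geodesics, not the automorphism hypothesis, that yields uniqueness of the minimizer and invertibility of the linearized operator, so you should not lean on discrete automorphisms at those two points.
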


\noindent In view of this result we can reduce our study to understanding coercivity of the relevant (twisted) energy functionals. For the closely related J-equation, introduced by Donaldson in \cite{DonaldsonJequation}, motivated by \cite{UhlenbeckYau}, we also have equivalence between existence and coercivity \cite{CollinsGabor, SongWeinkove}:

\begin{thm} \label{CollinsGabor properness thm} \emph{(\cite{SongWeinkove, CollinsGabor})}
Let $(X,\omega)$ be a compact K\"ahler manifold and $\theta$ a K\"ahler form on $X$. Then there exists a solution $\varphi \in \mathcal{H}$ to the equation
\begin{equation} \label{Equation J}
n\theta \wedge \omega_{\varphi}^{n-1} = C_{\theta,\omega}\omega_{\varphi}^n \; , \; \; C_{\theta,\omega} = n\frac{[\theta] \cdot [\omega]^{n-1}}{[\omega]^n},
\end{equation}
if and only if $\mathrm{E}_{\omega}^{\theta}$ is coercive.
\end{thm}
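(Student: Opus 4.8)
The plan is to exploit the variational structure: the first variation of $\mathrm{E}_{\omega}^{\theta}$ along a path $\varphi_t \in \mathcal{H}$ is
\[
\frac{d}{dt}\mathrm{E}_{\omega}^{\theta}(\varphi_t) = \frac{1}{V_{\omega}(n-1)!}\int_X \dot\varphi_t\left(\theta \wedge \omega_{\varphi_t}^{n-1} - \tfrac{1}{n}C_{\theta,\omega}\,\omega_{\varphi_t}^n\right),
\]
so that (recalling $\underline{\theta} = C_{\theta,\omega}$) the critical points of $\mathrm{E}_{\omega}^{\theta}$ on $\mathcal{H}$ are exactly the solutions of \eqref{Equation J}. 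The two structural facts I would lean on are: (i) the map $\theta \mapsto \mathrm{E}_{\omega}^{\theta}$ is \emph{linear}, so that by Proposition \ref{Proposition norm equals twisted} one has $\mathrm{E}_{\omega}^{\theta} - \delta(\mathrm{I}_{\omega}-\mathrm{J}_{\omega}) = \mathrm{E}_{\omega}^{\theta-\delta\omega}$ for every $\delta \in \mathbb{R}$; and (ii) $\mathrm{E}_{\omega}^{\theta}$ is convex along the weak $\mathcal{C}^{1,1}$-geodesics of $(\mathcal{E}^1,d_1)$ whenever $\theta \geq 0$, since then the second-variation integrand is non-negative. By (i), coercivity of $\mathrm{E}_{\omega}^{\theta}$ is \emph{equivalent} to $\mathrm{E}_{\omega}^{\theta-\delta\omega}$ being bounded below for some $\delta > 0$, which is the reformulation I would work with throughout.

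For the implication \textbf{coercivity $\Rightarrow$ existence} I would argue by the direct method. Coercivity forces $\mathrm{E}_{\omega}^{\theta}$ to be bounded below, and since $(\mathrm{I}_{\omega}-\mathrm{J}_{\omega})$ is comparable to $d_1(0,\cdot)$, every sublevel set is $d_1$-bounded and hence $d_1$-relatively compact in $\mathcal{E}^1$. Together with the $d_1$-lower semicontinuity of $\mathrm{E}_{\omega}^{\theta}$ this yields a minimiser $u \in \mathcal{E}^1$. The essential analytic input is then regularity: invoking the a priori $\mathcal{C}^{\infty}$ estimates for the J-equation of Song--Weinkove (equivalently, the long-time convergence of the J-flow, which is the downward gradient flow of $\mathrm{E}_{\omega}^{\theta}$), one upgrades $u$ to a smooth solution of \eqref{Equation J}.

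For the converse \textbf{existence $\Rightarrow$ coercivity} I would combine openness with linearity. Given a smooth solution $\varphi_\star$, the linearisation of the J-operator at $\varphi_\star$ is a second-order elliptic operator with positive definite symbol, invertible modulo constants; by the implicit function theorem the equation attached to $\theta - \delta\omega$ (still Kähler for small $\delta > 0$, once one tracks the change of the normalising constant $C_{\theta-\delta\omega,\omega}$) again admits a smooth solution for all sufficiently small $\delta > 0$. Convexity, fact (ii), forces such a critical point to be a global minimiser, so $\mathrm{E}_{\omega}^{\theta-\delta\omega}$ is bounded below; by fact (i) this is exactly coercivity of $\mathrm{E}_{\omega}^{\theta}$ with constant $\delta > 0$. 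The point here is that a bare critical point only yields boundedness below (the $\delta = 0$ case), so the openness step is what genuinely produces a \emph{positive} $\delta$.

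I expect the main obstacle to lie in the two analytic points rather than in the formal skeleton above. In the first implication the crux is the a priori estimate guaranteeing smoothness of the weak minimiser, which is precisely the deep content supplied by Song--Weinkove; in the second it is the invertibility of the linearised operator together with the bookkeeping of the moving topological constant in the implicit function theorem step. By contrast, the linearity in $\theta$ and the geodesic convexity are comparatively soft once the first-variation formula is in hand.
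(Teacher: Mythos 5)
This theorem is not proved in the paper: it is imported verbatim from Song--Weinkove and Collins--Sz\'ekelyhidi (the paper's ``proof'' is the citation), so your attempt must be judged against those references. Your direction \emph{existence} $\Rightarrow$ \emph{coercivity} is essentially the argument of Collins--Sz\'ekelyhidi: linearity in $\theta$ reduces coercivity to boundedness below of $\mathrm{E}_{\omega}^{\theta-\delta\omega}$; the linearisation of $\varphi\mapsto \mathrm{Tr}_{\omega_{\varphi}}\theta$ at a solution is $\psi\mapsto -\tilde\theta^{p\bar q}\psi_{p\bar q}$ with $\tilde\theta$ positive definite, hence invertible modulo constants, so the implicit function theorem produces a solution for $\theta-\delta\omega$ (still K\"ahler for small $\delta>0$); and convexity along weak geodesics promotes that critical point to a global minimiser. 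This half is sound, modulo the standard care needed to differentiate $\mathrm{E}_{\omega}^{\theta-\delta\omega}$ at the endpoint of a $\mathcal{C}^{1,1}$ geodesic.

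The direction \emph{coercivity} $\Rightarrow$ \emph{existence} has a genuine gap. Your compactness claim --- that $d_1$-bounded sublevel sets are $d_1$-relatively compact in $\mathcal{E}^1$ --- is false: $(\mathcal{E}^1,d_1)$ is an infinite-dimensional metric space in which closed bounded sets are far from compact; the compactness theorems available in this setting require an additional entropy bound, which $\mathrm{E}_{\omega}^{\theta}$ does not control. Without this the direct method produces no minimiser, and even if one extracts a limit in a coarser topology, $\mathrm{E}_{\omega}^{\theta}$ is a \emph{difference} of Monge--Amp\`ere-type energies and is not lower semicontinuous there; moreover a putative minimiser $u\in\mathcal{E}^1$ satisfies no Euler--Lagrange equation to which elliptic regularity could be applied. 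The Song--Weinkove estimates you invoke are a priori estimates along the J-flow and are conditional on the existence of a subsolution, i.e.\ a metric $\omega'\in[\omega]$ with $C_{\theta,\omega}\,\omega'^{\,n-1}-(n-1)\theta\wedge\omega'^{\,n-2}>0$; deducing this condition (equivalently, the convergence of the flow) from coercivity of the functional is precisely the non-trivial content of Proposition 22 of Collins--Sz\'ekelyhidi, and it is the step your sketch replaces with an unavailable compactness argument. The parenthetical appeal to ``long-time convergence of the J-flow'' points at the correct route, but that convergence is the conclusion to be derived from coercivity, not a black box that can be quoted independently of it.
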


\medskip

\noindent The approach of this paper is to study the 'optimal constant' $\delta \in \mathbb{R}$ in the definition of coercivity (Definition \ref{Definition properness}). More precisely, we consider the quantities
$$
\Gamma_{\theta}(\omega) := \sup \{ \delta \in \mathbb{R} \; \vert \; \exists C > 0, \; \mathrm{M}_{\omega}^{\theta}(\varphi) \geq \delta (\mathrm{I}_{\omega} - \mathrm{J}_{\omega})(\varphi) - C , \; \forall \varphi \in \mathcal{H} \}
$$
and
$$
\Gamma_{\theta}^{\mathrm{pp}}(\omega) := \sup \{ \delta \in \mathbb{R} \; \vert \; \exists C > 0, \; \mathrm{E}_{\omega}^{\theta}(\varphi) \geq \delta (\mathrm{I}_{\omega} - \mathrm{J}_{\omega})(\varphi) - C , \; \forall \varphi \in \mathcal{H} \}
$$
\\
\noindent which are related to existence of twisted cscK metrics and solutions to equation \eqref{Equation J} respectively. We refer to the above quantities as the \emph{$\theta$-twisted stability threshold} and the \emph{pluripotential stability threshold} respectively, and use the special shorthand $\Gamma(\omega) := \Gamma_0(\omega)$ for the threshold corresponding to the (untwisted) K-energy. Note in particular that the above quantities are well-defined, namely that there always exists a candidate for the supremum in the above definitions (see Section \ref{Section threshold properties} below for a proof of this). Finally, we emphasize that the above stability thresholds are positive if and only if the corresponding functionals are proper, and this is in turn related to existence of solutions to equations \eqref{Equation twisted cscK} and \eqref{Equation J}, provided that $\theta$ and $\omega$ are K\"ahler forms on $X$. The point of view of the remaining sections of this paper is however to use that properness of functionals can be considered even when $\theta$ is taken to be any smooth closed $(1,1)$-form on $X$. 

\bigskip

\section{Properties of stability thresholds and proof of main results}\label{Section threshold properties}

\noindent In this section we establish a number of fundamental properties of stability thresholds $\Gamma_{\theta}^{\mathrm{pp}}(\omega)$ on compact K\"ahler manifolds. In particular we underline that the above stability thresholds are well-defined, and depend only on the underlying cohomology classes. 

\subsubsection{Finiteness and independence of representatives}
First we check that the stability thresholds are well-defined for pairs $(\theta,\omega)$ where $\omega$ is a K\"ahler form on $X$ and $\theta$ is any smooth closed $(1,1)$-form on $X$. More precisely, we check that the thresholds $\Gamma_{\theta}(\omega)$ and $\Gamma^{\mathrm{pp}}_{\theta}(\omega)$ are finite real numbers, that is, the set of constants $\delta \in \mathbb{R}$ such that $\mathrm{E}_{\omega}^{\theta} \geq \delta(\mathrm{I}_{\omega} - \mathrm{J}_{\omega}) - C$ is always non-empty. 

\begin{prop} \label{Prop welldef} For each K\"ahler form $\omega$ on $X$ and each smooth closed $(1,1)$-form $\theta$ on $X$, the threshold values $\Gamma_{\theta}(\omega)$ and $\Gamma_{\theta}^{\mathrm{pp}}(\omega)$ are well-defined and finite, i.e. $$\Gamma_{\theta}(\omega) \in \mathbb{R}, \; \; \Gamma_{\theta}^{\mathrm{pp}}(\omega) \in \mathbb{R}.$$
\end{prop}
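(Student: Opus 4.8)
The plan is to show that for each functional the set of admissible constants $\delta$ is a nonempty half-line that is bounded above, so that its supremum is a finite real number. The first, purely formal, observation is that since $(\mathrm{I}_\omega - \mathrm{J}_\omega)(\varphi) \geq 0$ for all $\varphi$, each admissible set is downward closed: if $\delta$ works with constant $C$ and $\delta' < \delta$, then $\delta(\mathrm{I}_\omega-\mathrm{J}_\omega) \geq \delta'(\mathrm{I}_\omega-\mathrm{J}_\omega)$ and so $\delta'$ works with the same $C$. Hence $\Gamma^{\mathrm{pp}}_\theta(\omega),\Gamma_\theta(\omega) \in \mathbb{R}$ exactly when the corresponding set is nonempty and bounded above. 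I will also use that $\mathrm{I}_\omega - \mathrm{J}_\omega$ is unbounded above on $\mathcal{H}$, and that all the functionals involved are invariant under $\varphi \mapsto \varphi + c$ (a short computation using that all the wedge products appearing have the cohomology class of $\omega^n$), so that I may normalise $\sup_X \varphi = 0$ and assume $-\varphi \geq 0$ throughout.

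The heart of the matter is a two-sided comparison $|\mathrm{E}^\theta_\omega(\varphi)| \leq A\,(\mathrm{I}_\omega - \mathrm{J}_\omega)(\varphi) + B$ for constants $A,B \geq 0$ depending only on $\theta$ and $\omega$. Granting this, the lower bound shows $-A$ is admissible (nonemptiness), while the upper bound forces every admissible $\delta$ to satisfy $\delta \leq A$ (since $\mathrm{I}_\omega - \mathrm{J}_\omega$ is unbounded above, the inequality $(\delta-A)(\mathrm{I}_\omega-\mathrm{J}_\omega) \leq B + C$ cannot hold for $\delta > A$), so that $\Gamma^{\mathrm{pp}}_\theta(\omega) \in [-A,A]$. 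To prove the comparison I would first use compactness of $X$ and positivity of $\omega$ to fix $\lambda > 0$ with $-\lambda\omega \leq \theta \leq \lambda\omega$ pointwise, whence $|\underline{\theta}| \leq \lambda n$. With $-\varphi \geq 0$ and $\Omega_j := \omega^j \wedge \omega_\varphi^{n-1-j} \geq 0$, the pointwise bound on $\theta$ gives $|\int_X \varphi\,\theta \wedge \Omega_j| \leq \lambda \int_X (-\varphi)\,\omega^{j+1}\wedge\omega_\varphi^{n-1-j}$, and the second group of terms of $\mathrm{E}^\theta_\omega$ is bounded similarly by a multiple of $\int_X (-\varphi)\,\omega^j\wedge\omega_\varphi^{n-j}$. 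Each such mixed integral $\int_X(-\varphi)\,\omega^a\wedge\omega_\varphi^b$ is in turn controlled by $C(\mathrm{I}_\omega-\mathrm{J}_\omega)(\varphi) + C'$ by the standard comparison estimates for mixed Monge--Amp\`ere energies, which rest on integration by parts turning telescoping differences into nonnegative Dirichlet energies $\int_X \sqrt{-1}\,\partial\varphi \wedge \bar\partial\varphi \wedge (\text{positive form}) \geq 0$; note $\mathrm{I}_\omega - \mathrm{J}_\omega = \mathrm{E}^\omega_\omega$ is itself such a sum by Proposition \ref{Proposition norm equals twisted}. Summing the finitely many terms yields the claimed bound.

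For the twisted threshold I would use the Chen--Tian decomposition \eqref{Equation K-energy} and linearity in $\theta$ to write $\mathrm{M}^\theta_\omega = \mathrm{E}^{\theta - \mathrm{Ric}(\omega)}_\omega + \mathrm{H}_\omega$, where $\theta - \mathrm{Ric}(\omega)$ is again a smooth closed $(1,1)$-form and $\mathrm{H}_\omega \geq 0$. Applying the comparison of the previous paragraph to the form $\theta - \mathrm{Ric}(\omega)$ and discarding the nonnegative entropy gives $\mathrm{M}^\theta_\omega(\varphi) \geq -A'(\mathrm{I}_\omega - \mathrm{J}_\omega)(\varphi) - B'$, so the admissible set for $\Gamma_\theta(\omega)$ is nonempty. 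Boundedness from above is the one point that does not reduce to the pluripotential comparison, because $\mathrm{H}_\omega$ is not dominated by any multiple of $\mathrm{I}_\omega - \mathrm{J}_\omega$; I therefore would exhibit a single nontrivial degeneration, for instance the (weak) geodesic ray $\varphi_t$ associated to the deformation to the normal cone of a point, along which $(\mathrm{I}_\omega - \mathrm{J}_\omega)(\varphi_t)$ has positive linear slope while $\mathrm{M}^\theta_\omega(\varphi_t)$ has finite asymptotic slope. Dividing $\mathrm{M}^\theta_\omega(\varphi_t) \geq \delta(\mathrm{I}_\omega - \mathrm{J}_\omega)(\varphi_t) - C$ by $(\mathrm{I}_\omega - \mathrm{J}_\omega)(\varphi_t)$ and letting $t \to \infty$ then bounds $\delta$, hence $\Gamma_\theta(\omega)$, above by a finite ratio of slopes.

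The main obstacle is twofold. First, one cannot feed the pointwise inequality $-\lambda\omega \leq \theta \leq \lambda\omega$ directly into $\mathrm{E}^\theta_\omega$, since the integrands carry the factor $\varphi$ of indefinite sign; it is precisely the normalisation $\sup_X\varphi = 0$ together with the sign-definite Dirichlet-energy identities that rescues the estimate, and obtaining the right mixed-energy comparison is the technical core. Second, for the upper bound on $\Gamma_\theta(\omega)$ the entropy genuinely fails to be controlled by $\mathrm{I}_\omega - \mathrm{J}_\omega$, so finiteness there requires the extra geometric input of a degeneration with finite Mabuchi slope rather than a pluripotential inequality.
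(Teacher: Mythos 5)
Your argument is correct and follows essentially the same route as the paper: after normalising the potential, the pointwise comparison $-\lambda\omega\le\theta\le\lambda\omega$ reduces the two-sided bound $|\mathrm{E}^{\theta}_{\omega}(\varphi)|\le A(\mathrm{I}_{\omega}-\mathrm{J}_{\omega})(\varphi)+B$ to standard comparisons of mixed Monge--Amp\`ere energies (the paper routes these through $d_1(0,\varphi)$ via Darvas's estimates), and non-negativity of the entropy then yields the lower bound for $\Gamma_{\theta}(\omega)$ exactly as in the text. Your additional third paragraph, ruling out $\Gamma_{\theta}(\omega)=+\infty$ by exhibiting a degeneration with positive $(\mathrm{I}_{\omega}-\mathrm{J}_{\omega})$-slope and finite Mabuchi slope, addresses a point the paper's proof leaves implicit (it explicitly argues only that the admissible set is non-empty) and is a sound way to complete that half of the statement.
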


\begin{proof}
It is enough to show that the threshold value cannot attain $-\infty$, as long as $\omega$ is K\"ahler. To see this we may without loss of generality assume that 
$$
%\mathrm{E}(\varphi) := 
\int_X  \sum_{j = 0}^n \varphi \omega^j \wedge \omega_{\varphi}^{n-j} = 0.
$$
We can always find a constant $C > 0$ such that $-C\omega \leq \theta \leq C\omega$. Then we have the estimate
$$
|\mathrm{E}_{\omega}^{\theta}(\varphi)| \leq \frac{1}{V_{\omega} n!} \int_X \sum_{j=1}^n |\varphi| \omega^j \wedge \omega_{\varphi}^{n-j} \leq Cd_1(0,\varphi/2),
%\leq Cd_1(0,\varphi/2) \leq C'd_1(0,\varphi) \leq C''(\mathrm{I}-\mathrm{J})(\varphi).
$$
and (up to changing $C$ if necessary) we have
$$ 
Cd_1(0,\varphi/2) \leq Cd_1(0,\varphi) \leq C(\mathrm{I}-\mathrm{J})(\varphi).
$$
Here we have used \cite[Lemma 3.33]{Darvassurvey} and \cite[Theorem 3]{Darvas15} for the second and third inequalities, and finally the last step can be justified by the well-known double inequality
$$
\frac{1}{C}d_1(0,\varphi) \leq (\mathrm{I}-\mathrm{J})(\varphi) \leq Cd_1(0,\varphi),
$$
see \cite[Remark 6.3]{Darvas15} and \cite[Proposition 5.5]{DR}. 
In conclusion, there is a constant $C := C(||\varphi||_{\theta}, X) > 0$ such that
$$
\mathrm{E}_{\omega}^{\theta}(\varphi) \geq -C(\mathrm{I}-\mathrm{J})(\varphi) - C
$$ 
for all $\varphi \in \mathrm{PSH}(X,\omega) \cap L^{\infty}(X)$. Since the entropy term of the K-energy \eqref{Equation K-energy} is always non-negative, it follows that both 
$\Gamma_{\theta}(\omega)$
and $\Gamma_{\theta}^{\mathrm{pp}}(\omega)$ are well-defined finite real numbers.
\end{proof}

\noindent We moreover show that the stability thresholds depend only on the cohomology classes $[\theta]$ and $[\omega]$, and not on the individual representatives (as long as $\omega$ is assumed K\"ahler).  The following result is a simple refinement of \cite[Corollary 4.7]{ChenChengII} and \cite[Theorem 1]{CollinsGabor}, such that it encompasses also the possibility of twisting forms that are not necessarily K\"ahler: 

\begin{prop} \label{Proposition indep of repr}
Suppose that $(X,\omega)$ is a compact K\"ahler manifold, and let $\theta_1, \theta_2$ be smooth closed $(1,1)$-forms on $X$, with $[\theta_1], [\theta_2] \in H^{1,1}(X,\mathbb{R})$ the associated $(1,1)$-cohomology classes. If $[\theta_1] = [\theta_2]$ then
$$
\Gamma^{\mathrm{pp}}_{\theta_1}(\omega) = \Gamma^{\mathrm{pp}}_{\theta_2}(\omega)
$$
and
$$
\Gamma_{\theta_1}(\omega) = \Gamma_{\theta_2}(\omega).
$$
\end{prop}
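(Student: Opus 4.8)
The plan is to reduce everything to a single uniform boundedness estimate. Observe that both thresholds depend on $\theta$ only through the functional $\mathrm{E}_{\omega}^{\theta}$: indeed $\mathrm{M}_{\omega}^{\theta} = \mathrm{M}_{\omega} + \mathrm{E}_{\omega}^{\theta}$, so that $\mathrm{M}_{\omega}^{\theta_1} - \mathrm{M}_{\omega}^{\theta_2} = \mathrm{E}_{\omega}^{\theta_1} - \mathrm{E}_{\omega}^{\theta_2}$, and it suffices to treat the $\mathrm{E}$-functional. From the defining formula, $\theta \mapsto \mathrm{E}_{\omega}^{\theta}(\varphi)$ is linear, so $\mathrm{E}_{\omega}^{\theta_1}(\varphi) - \mathrm{E}_{\omega}^{\theta_2}(\varphi) = \mathrm{E}_{\omega}^{\theta_1 - \theta_2}(\varphi)$. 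I claim this difference is bounded by a constant independent of $\varphi$. Granting this, if $\mathrm{E}_{\omega}^{\theta_1}(\varphi) \geq \delta(\mathrm{I}_{\omega} - \mathrm{J}_{\omega})(\varphi) - C$ holds for all $\varphi \in \mathcal{H}$, then the same inequality holds for $\theta_2$ with $C$ replaced by $C + \mathrm{const}$ and the same $\delta$; hence the two suprema defining $\Gamma^{\mathrm{pp}}_{\theta_1}(\omega)$ and $\Gamma^{\mathrm{pp}}_{\theta_2}(\omega)$ coincide, and likewise for $\Gamma_{\theta_1}(\omega) = \Gamma_{\theta_2}(\omega)$.

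To establish the claimed bound, since $[\theta_1] = [\theta_2]$ the $\partial\bar\partial$-lemma on the compact Kähler manifold $X$ gives a smooth function $f$ with $\theta_1 - \theta_2 = \sqrt{-1}\partial\bar\partial f$. Moreover $\underline{\theta_1 - \theta_2} = 0$ because the average is the topological quantity $([\theta_1 - \theta_2]\cdot[\omega]^{n-1})/[\omega]^n = 0$, so the second term in the formula for $\mathrm{E}_{\omega}^{\theta_1 - \theta_2}$ vanishes identically, leaving only $\frac{1}{V_{\omega}n!}\int_X \varphi \sum_{j=0}^{n-1} \sqrt{-1}\partial\bar\partial f \wedge \omega^j \wedge \omega_{\varphi}^{n-j-1}$. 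For $\varphi \in \mathcal{H}$ smooth, I integrate by parts: each form $\omega^j \wedge \omega_{\varphi}^{n-j-1}$ is closed, so the two derivatives transfer from $f$ to $\varphi$, and using $\sqrt{-1}\partial\bar\partial\varphi = \omega_{\varphi} - \omega$ this equals $\frac{1}{V_{\omega}n!}\int_X f \sum_{j=0}^{n-1} (\omega_{\varphi} - \omega) \wedge \omega^j \wedge \omega_{\varphi}^{n-j-1} = \frac{1}{V_{\omega}n!}\int_X f \sum_{j=0}^{n-1}\bigl(\omega^j \wedge \omega_{\varphi}^{n-j} - \omega^{j+1} \wedge \omega_{\varphi}^{n-j-1}\bigr)$.

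The key observation is that this sum telescopes to $\frac{1}{V_{\omega}n!}\int_X f(\omega_{\varphi}^n - \omega^n)$. Since $\int_X \omega_{\varphi}^n = \int_X \omega^n = V_{\omega}n!$ by cohomological invariance of the mass, I conclude $|\mathrm{E}_{\omega}^{\theta_1 - \theta_2}(\varphi)| \leq \frac{1}{V_{\omega}n!}\,\sup_X|f|\,\int_X(\omega_{\varphi}^n + \omega^n) = 2\sup_X|f|$, which is the desired uniform bound. The only point requiring care is the integration by parts, but for smooth $\varphi$ (and the fixed smooth $f$) it is entirely standard, and the telescoping is the place where the specific coefficients of the Chen--Tian functional make everything collapse. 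I do not expect a genuine obstacle here; the content is precisely the recognition that the representative-dependent part of $\mathrm{E}_{\omega}^{\theta}$ reduces, after integration by parts, to a difference of Monge--Ampère measures paired against a fixed bounded function, which is automatically bounded.
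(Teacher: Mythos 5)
Your proof is correct, and it takes a genuinely different and more self-contained route than the paper. The paper's own proof first invokes \cite[Corollary 4.7]{ChenChengII} and \cite[Theorem 1]{CollinsGabor} to handle the case where both representatives are K\"ahler forms, and then reduces the general case to that one by adding $\lambda\omega$ for $\lambda \gg 0$ and applying Lemma \ref{Lemma add omega}, so that the actual work of comparing the two functionals is outsourced to the cited references. You instead prove directly that $\mathrm{E}_{\omega}^{\theta_1} - \mathrm{E}_{\omega}^{\theta_2} = \mathrm{E}_{\omega}^{\theta_1-\theta_2}$ is uniformly bounded: writing $\theta_1 - \theta_2 = \sqrt{-1}\partial\bar\partial f$ via the $\partial\bar\partial$-lemma, noting $\underline{\theta_1 - \theta_2} = 0$, integrating by parts against the closed forms $\omega^j \wedge \omega_{\varphi}^{n-j-1}$, and observing that the resulting sum telescopes to $\frac{1}{V_{\omega}n!}\int_X f(\omega_{\varphi}^n - \omega^n)$, which is bounded by $2\sup_X|f|$ since the total masses agree. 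All the steps check out (linearity of $\theta \mapsto \mathrm{E}_{\omega}^{\theta}$, vanishing of the average, the Stokes argument for smooth $\varphi$ and $f$, and the telescoping identity $\sum_{j=0}^{n-1}(\omega^j\wedge\omega_{\varphi}^{n-j} - \omega^{j+1}\wedge\omega_{\varphi}^{n-j-1}) = \omega_{\varphi}^n - \omega^n$), and the uniform bound immediately gives equality of the sets of admissible $\delta$, hence of both thresholds, the $\mathrm{M}$-case following since $\mathrm{M}_{\omega}^{\theta_1} - \mathrm{M}_{\omega}^{\theta_2}$ is the same bounded quantity. Your version buys a proof that treats arbitrary smooth closed $(1,1)$-forms in one stroke with no positivity shift and no external input, and it yields the stronger statement that the two functionals differ by a bounded function rather than merely sharing coercivity constants; the paper's version is shorter on the page but only because the analogous computation lives inside the cited results.
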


\begin{proof}
By \cite[Corollary 4.7]{ChenChengII} (and \cite[Theorem 1]{CollinsGabor} for the J-equation) the result holds whenever $\theta_1$ and $\theta_2$ are K\"ahler forms representing the same K\"ahler class $[\theta_1] = [\theta_2]$. In general, we fix a K\"ahler representative $\omega \in [\omega]$ and pick $\lambda > 0$ large enough so that $\theta_1 + \lambda\omega > 0$ and $\theta_2 + \lambda\omega > 0$ are both K\"ahler representatives of the class $[\theta] + \lambda[\omega]$. By Lemma \ref{Lemma add omega} and \cite[Corollary 4.7]{ChenChengII} we then have 
$$
\Gamma_{\theta_1}(\omega) = \Gamma_{\theta_1 + \lambda\omega}(\omega) - \lambda = \Gamma_{\theta_2 + \lambda\omega}(\omega) - \lambda = \Gamma_{\theta_2}(\omega), 
$$
and the same argument applies also for $\Gamma^{\mathrm{pp}}$.
\end{proof}

\begin{rem} \label{Remark cohomology notation}
It is moreover straightforward to check that $\Gamma^{\mathrm{pp}}_{\theta}(\omega)$ and $\Gamma_{\theta}(\omega)$ are independent of the choice of K\"ahler form $\omega \in [\omega]$. As a consequence of Propositions \ref{Prop welldef} and \ref{Proposition indep of repr} we may thus naturally view the stability thresholds as well-defined functions 
$$\Gamma: H^{1,1}(X,\mathbb{R}) \times \mathcal{C}_X \rightarrow \mathbb{R}$$ and $$\Gamma^{\mathrm{pp}}: H^{1,1}(X,\mathbb{R}) \times \mathcal{C}_X \rightarrow \mathbb{R}$$
on the level of cohomology. The pairs $([\theta],[\omega]) \in \mathcal{C}_X \times \mathcal{C}_X$ for which the function $\Gamma$ \emph{(resp. $\Gamma^{pp}$)} is positive, are precisely those classes where the corresponding constant scalar curvature equation \emph{(resp. $\mathrm{J}$-equation, see \eqref{Equation J})} is satisfied for some suitable choice of K\"ahler forms $\theta \in [\theta]$ and $\omega \in [\omega]$. Note that forms are needed to discuss the J-equation and its solvability, but the abstract conditions both for solvability and properness depend only on cohomology.
\end{rem}

\noindent 
\begin{notation} \label{Remark notation}
\emph{Motivated by the above it would be natural to emphasize the dependence only on cohomology in our notation, by writing $\Gamma_{[\theta]}([\omega])$ and $\Gamma^{\mathrm{pp}}_{[\theta]}([\omega])$ where $([\theta],[\omega]) \in H^{1,1}(X,\mathbb{R}) \times \mathcal{C}_X$. 
It would then be implicit in the notation that we refer to the numerical quantities which are equal respectively to $\Gamma_{\theta}(\omega)$ and $\Gamma^{\mathrm{pp}}_{\theta}(\omega)$ for all choices of K\"ahler forms $\omega \in [\omega]$ and all (not necessarily K\"ahler) smooth closed $(1,1)$-forms $\theta \in [\theta]$. This notation will sometimes be used in the following sections, but since stability thresholds occur so frequently our arguments we most often omit the brackets in our notation, and let the dependence only on cohomology be understood.}
\end{notation}

\noindent For the sequel it is useful to note also the following simple property: 

\begin{lem} \label{Lemma add omega}
Suppose that $\theta$ is a smooth closed $(1,1)$-form on $X$ and $\omega$ a K\"ahler form on $X$. Let $a,b \in  \mathbb{R}$ with $a \geq 0$. Then
$$
\Gamma_{a\theta + b\omega}(\omega) = a \Gamma_{\theta}(\omega) + b
$$
and
$$
\Gamma^{\mathrm{pp}}_{a\theta + b\omega}(\omega) = a\Gamma^{\mathrm{pp}}_{\theta}(\omega) + b.
$$
\end{lem}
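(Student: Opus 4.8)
The plan is to reduce everything to two facts. First, the Donaldson functional $\mathrm{E}_{\omega}^{\theta}$ is \emph{linear} in the twisting form $\theta$: this is immediate from its defining formula, since both the first term $\sum_{j}\int_X \varphi\,\theta\wedge\omega^j\wedge\omega_{\varphi}^{n-j-1}$ and the topological constant $\underline{\theta}$ depend linearly on $\theta$. Second, $\mathrm{E}_{\omega}^{\omega}=\mathrm{I}_{\omega}-\mathrm{J}_{\omega}$, which is exactly Proposition \ref{Proposition norm equals twisted}. Combining these yields the master identity
$$
\mathrm{E}_{\omega}^{a\theta+b\omega}(\varphi)=a\,\mathrm{E}_{\omega}^{\theta}(\varphi)+b\,(\mathrm{I}_{\omega}-\mathrm{J}_{\omega})(\varphi),
$$
valid for all $\varphi\in\mathcal{H}$ and all $a,b\in\mathbb{R}$. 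Writing $N:=\mathrm{I}_{\omega}-\mathrm{J}_{\omega}$ for brevity, and recalling that $N\geq 0$ and is unbounded above on $\mathcal{H}$, the statement about $\Gamma^{\mathrm{pp}}$ becomes a formal assertion about how the supremum defining the threshold transforms under this affine substitution in $\theta$.

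For $\Gamma^{\mathrm{pp}}$ I would argue by two inequalities. For the lower bound, take any admissible $\delta$ for $\theta$, i.e. $\mathrm{E}_{\omega}^{\theta}\geq\delta N-C$; multiplying by $a\geq 0$ (which preserves the inequality) and adding $bN$ to both sides, the master identity turns the left side into $\mathrm{E}_{\omega}^{a\theta+b\omega}$ and the right side into $(a\delta+b)N-aC$, so $a\delta+b$ is admissible for $a\theta+b\omega$. Taking the supremum over $\delta$ gives $\Gamma^{\mathrm{pp}}_{a\theta+b\omega}(\omega)\geq a\Gamma^{\mathrm{pp}}_{\theta}(\omega)+b$. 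For the reverse inequality when $a>0$ I would use that the substitution is invertible: applying the lower bound just proved to the form $a\theta+b\omega$ with coefficients $a^{-1}>0$ and $-b/a$ recovers $\theta=a^{-1}(a\theta+b\omega)-(b/a)\omega$, giving $\Gamma^{\mathrm{pp}}_{\theta}(\omega)\geq a^{-1}\Gamma^{\mathrm{pp}}_{a\theta+b\omega}(\omega)-b/a$, which rearranges to the matching upper bound. The boundary case $a=0$ is handled directly: here $\mathrm{E}_{\omega}^{b\omega}=bN$, so the defining inequality reads $(b-\delta)N\geq -C$, which holds for all $\varphi$ exactly when $\delta\leq b$ (using that $N\geq 0$ is unbounded above), whence $\Gamma^{\mathrm{pp}}_{b\omega}(\omega)=b$.

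For the twisted K-energy threshold $\Gamma_{\theta}(\omega)$, the same substitution applies to $\mathrm{M}_{\omega}^{a\theta+b\omega}=\mathrm{M}_{\omega}+\mathrm{E}_{\omega}^{a\theta+b\omega}=\mathrm{M}_{\omega}^{a\theta}+bN$. The additive $b\omega$-part behaves exactly as before, since the extra summand $bN$ shifts the admissible constants by precisely $b$; thus $\Gamma_{a\theta+b\omega}(\omega)=\Gamma_{a\theta}(\omega)+b$ for every twist, and in particular the homogeneous case $a=1$, namely $\Gamma_{\theta+b\omega}(\omega)=\Gamma_{\theta}(\omega)+b$, is immediate and is exactly the instance invoked later (e.g. in Proposition \ref{Proposition indep of repr}). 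The step I expect to require the most care, and hence the main obstacle, is the scaling in $a$ for $\Gamma$ rather than $\Gamma^{\mathrm{pp}}$: unlike the pluripotential functional, $\mathrm{M}_{\omega}^{a\theta}=\mathrm{M}_{\omega}+a\mathrm{E}_{\omega}^{\theta}$ carries the fixed entropy/K-energy baseline $\mathrm{M}_{\omega}$, which is not homogeneous in $a$, so one cannot simply multiply an admissible inequality through by $a$ as in the pluripotential case. I would therefore organize the proof so as to isolate this genuine homogeneity issue from the routine affine shift by $b$, applying the invertibility trick only where the functional is truly homogeneous in the twist (namely for $\Gamma^{\mathrm{pp}}$, and for $\Gamma$ in the form actually used downstream), and treating the general scaling separately with the extra bookkeeping that the non-scaling $\mathrm{M}_{\omega}$-term demands.
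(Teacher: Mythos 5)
Your treatment of $\Gamma^{\mathrm{pp}}$ is correct and is essentially the paper's argument: the paper also reduces to the identity $\mathrm{E}_{\omega}^{a\theta+b\omega}=a\mathrm{E}_{\omega}^{\theta}+b(\mathrm{I}_{\omega}-\mathrm{J}_{\omega})$ via Proposition \ref{Proposition norm equals twisted} and then simply says ``the conclusion follows by taking the infimum.'' You have carried out that last step properly (one-sided estimate from $a\geq 0$, inversion of the affine substitution for $a>0$, direct treatment of $a=0$ using that $\mathrm{I}_{\omega}-\mathrm{J}_{\omega}$ is nonnegative and unbounded), which is exactly the bookkeeping the paper suppresses.

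Your hesitation about the $\Gamma$ statement is not excessive caution: the identity $\Gamma_{a\theta+b\omega}(\omega)=a\Gamma_{\theta}(\omega)+b$ is in fact \emph{false} for general $a\geq 0$ as stated, and no amount of extra bookkeeping will recover it. Take $\theta=0$ and $a=0$: then $\mathrm{M}_{\omega}^{b\omega}=\mathrm{M}_{\omega}+b(\mathrm{I}_{\omega}-\mathrm{J}_{\omega})$, so $\Gamma_{b\omega}(\omega)=\Gamma(\omega)+b$, whereas the claimed right-hand side is $b$; these differ whenever $\Gamma(\omega)\neq 0$ (e.g.\ on any cscK manifold, where $\Gamma(\omega)>0$). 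More generally $\mathrm{M}_{\omega}^{a\theta+b\omega}=(1-a)\mathrm{M}_{\omega}+a\mathrm{M}_{\omega}^{\theta}+b(\mathrm{I}_{\omega}-\mathrm{J}_{\omega})$, and the non-scaling baseline $\mathrm{M}_{\omega}$ (equivalently, the entropy term in the Chen--Tian decomposition, which is not affine in the twist) obstructs homogeneity in $a$ exactly as you suspected. The paper's one-line proof glosses over this; what is true, and what the paper actually uses (e.g.\ in Proposition \ref{Proposition indep of repr}), is the affine shift $\Gamma_{\theta+\lambda\omega}(\omega)=\Gamma_{\theta}(\omega)+\lambda$, i.e.\ the case $a=1$, which your argument does establish. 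So the right resolution is not ``more care with the scaling'' but a restriction of the $\Gamma$-statement to $a=1$ (or an added hypothesis such as $\Gamma(\omega)=0$); your proposal is correct on every instance of the lemma that is actually valid and invoked downstream.
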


\begin{proof}
First note that the $(a\theta + b\omega)$-twisted $\mathrm{E}_{\omega}^{a\theta + b\omega}$-functional is linear in the twist. In fact, we have
\begin{equation} \label{Equation linear functional}
\mathrm{E}_{\omega}^{a\theta + b\omega} = a\mathrm{E}_{\omega}^{\theta} + b\mathrm{E}_{\omega}^{\omega} = a\mathrm{E}_{\omega}^{\theta} + b(\mathrm{I}_{\omega} - \mathrm{J}_{\omega}),  
\end{equation}
where in the last step we have used Proposition \ref{Proposition norm equals twisted}.
The conclusion follows by taking the infimum in \eqref{Equation linear functional}. 
\end{proof}

\subsection{Piecewise linearity of $\Gamma^{\mathrm{pp}}$ in the twisting form}

As a consequence of Lemma \ref{Lemma add omega} we now make the key observation that 
the threshold function
$
\Gamma_{\theta_s}^{\mathrm{pp}}(\omega)
$
is piecewise linear as we vary the twisting form $\theta_s$ along the straight line $\theta_s := (1-s)\theta +s\omega,$
where $s \in \mathbb{R}$. By Proposition \ref{Proposition indep of repr} the piecewise linearity can moreover be described purely in terms of the cohomology classes of the twisting forms. To describe more precisely in what sense this is true, fix any pair $(\theta,\omega)$ as before, such that $([\theta],[\omega]) \in H^{1,1}(X,\mathbb{R}) \times \mathcal{C}_X$, and consider the two dimensional subset of $H^{1,1}(X,\mathbb{R})$ that is linearly spanned by $[\theta]$ and $[\omega]$.
Decompose this subset into two components
$$
\mathrm{Span}([\theta],[\omega]) = \mathrm{Span}([\theta],[\omega])^+  \cup \mathrm{Span}([\theta],[\omega])^- ,
$$
where $$\mathrm{Span}([\theta],[\omega])^+ := \{ a[\theta] + b[\omega], a \geq 0, b \in \mathbb{R} \}$$ and 
$$\mathrm{Span}([\theta],[\omega])^- := \{ a[\theta] + b[\omega], a \leq 0, b \in \mathbb{R}\}.$$ 

\medskip

\noindent We then make the following key observation:

\begin{lem} \label{Lemma piecewise linear}
Suppose that $\beta_0$ and $\beta_1$ are smooth closed $(1,1)$-forms on $X$ such that either $([\beta_0], [\beta_1]) \in \mathrm{Span}([\theta],[\omega])^+ \times \mathrm{Span}([\theta],[\omega])^+$ or $([\beta_0], [\beta_1]) \in \mathrm{Span}([\theta],[\omega])^- \times \mathrm{Span}([\theta],[\omega])^-$.
Then 
\begin{equation} \label{Equation linearity lemma}
\Gamma^{\mathrm{pp}}_{(1-t)\beta_0 + t\beta_1}(\omega) = (1-t)\Gamma^{\mathrm{pp}}_{\beta_0}(\omega) + t\Gamma^{\mathrm{pp}}_{\beta_1}(\omega),
\end{equation}
for each $t \in [0,1]$. 
\end{lem}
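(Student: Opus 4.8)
The plan is to reduce everything to the single linearity identity of Lemma \ref{Lemma add omega}, after first passing to cohomology. By Proposition \ref{Proposition indep of repr} the quantity $\Gamma^{\mathrm{pp}}_{\beta}(\omega)$ depends only on the class $[\beta]$, so I may freely write each class lying in the two-dimensional subspace $\mathrm{Span}([\theta],[\omega])$ in coordinates: set $[\beta_0] = a_0[\theta] + b_0[\omega]$ and $[\beta_1] = a_1[\theta] + b_1[\omega]$ with $a_0,a_1,b_0,b_1 \in \mathbb{R}$.

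Consider first the case $([\beta_0],[\beta_1]) \in \mathrm{Span}([\theta],[\omega])^+ \times \mathrm{Span}([\theta],[\omega])^+$, i.e. $a_0 \geq 0$ and $a_1 \geq 0$. Lemma \ref{Lemma add omega} (applicable precisely because the coefficient of $\theta$ is nonnegative) gives $\Gamma^{\mathrm{pp}}_{\beta_i}(\omega) = a_i\Gamma^{\mathrm{pp}}_{\theta}(\omega) + b_i$ for $i = 0,1$. The key structural point is that the half-plane $\mathrm{Span}([\theta],[\omega])^+$ is convex, so the class of $(1-t)\beta_0 + t\beta_1$, namely $((1-t)a_0 + ta_1)[\theta] + ((1-t)b_0 + tb_1)[\omega]$, again has nonnegative $\theta$-coefficient $(1-t)a_0 + ta_1 \geq 0$ for all $t \in [0,1]$. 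Hence the same instance of Lemma \ref{Lemma add omega} applies throughout the segment, and substituting yields the claimed identity \eqref{Equation linearity lemma} by a one-line computation: the left-hand side equals $((1-t)a_0 + ta_1)\Gamma^{\mathrm{pp}}_{\theta}(\omega) + (1-t)b_0 + tb_1$, which matches $(1-t)\Gamma^{\mathrm{pp}}_{\beta_0}(\omega) + t\Gamma^{\mathrm{pp}}_{\beta_1}(\omega)$ after expanding.

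For the case $([\beta_0],[\beta_1]) \in \mathrm{Span}([\theta],[\omega])^- \times \mathrm{Span}([\theta],[\omega])^-$, I would run the very same argument with the base form $-\theta$ in place of $\theta$: writing $a_i \leq 0$ as $a_i = -|a_i|$, the coefficient of $-\theta$ becomes $|a_i| \geq 0$, so Lemma \ref{Lemma add omega} now gives $\Gamma^{\mathrm{pp}}_{\beta_i}(\omega) = |a_i|\Gamma^{\mathrm{pp}}_{-\theta}(\omega) + b_i$. Again $\mathrm{Span}([\theta],[\omega])^-$ is convex, so the connecting segment stays in it and a single affine formula governs the whole segment; the identity follows exactly as before.

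Thus there is no genuine analytic obstacle: the content is entirely the linearity of $\mathrm{E}^{\theta}_{\omega}$ in the twist already recorded in Lemma \ref{Lemma add omega}. The only point requiring care -- and the reason the statement is merely \emph{piecewise} linear rather than globally linear -- is the sign constraint $a \geq 0$ in that lemma. The function $[\beta] \mapsto \Gamma^{\mathrm{pp}}_{\beta}(\omega)$ is affine on each of the two half-planes, but with generally different $\theta$-slopes ($\Gamma^{\mathrm{pp}}_{\theta}(\omega)$ versus $-\Gamma^{\mathrm{pp}}_{-\theta}(\omega)$), producing a kink along the line $a = 0$. The hypothesis that $\beta_0$ and $\beta_1$ lie in a common half-plane is exactly what forces the segment joining them to avoid this kink, so that one affine expression applies uniformly along the segment.
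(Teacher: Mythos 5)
Your proof is correct and follows essentially the same route as the paper: reduce to cohomology via Proposition \ref{Proposition indep of repr}, write the classes in coordinates with respect to $[\theta]$ and $[\omega]$, observe that convexity of the half-plane keeps the $\theta$-coefficient of the same sign along the segment, and apply Lemma \ref{Lemma add omega} (with $-\theta$ in the negative chamber). Your direct substitution is in fact slightly cleaner than the paper's argument, which instead checks that both sides are affine in $t$ and agree at two points.
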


\begin{proof}
Suppose first that $([\beta_0], [\beta_1]) \in \mathrm{Span}([\theta],[\omega])^+ \times \mathrm{Span}([\theta],[\omega])^+$. By Proposition \ref{Proposition indep of repr} we may without loss of generality assume that 
$$
\beta_0 = \lambda_0\theta + \lambda'_0\omega
$$
$$
\beta_1 = \lambda_1\theta + \lambda'_1\omega
$$
for some $\lambda_0, \lambda_1, \lambda'_0, \lambda'_1 \in \mathbb{R}$, with $\lambda_0, \lambda_1 \geq 0$. As a consequence
$$
\beta_t := (1-t) \beta_0 + t\beta_1 = \lambda_t \theta + \lambda'_t \omega,
$$
where
$$
\lambda_t := (1-t)\lambda_0 + t\lambda_1 \; , \; \; \lambda'_t := (1-t)\lambda'_0 + t\lambda'_1,
$$
and since $\lambda_0, \lambda_1 \geq 0$ we also have $\lambda_t \geq 0$ for each $t \in [0,1]$. By Lemma \ref{Lemma add omega} we moreover have 
$$
\mathrm{E}_{\omega}^{\beta_t}(\varphi) = \mathrm{E}_{\omega}^{\lambda_t\theta + \lambda'_t\omega}(\varphi) = \lambda_t \mathrm{E}_{\omega}^{\theta}(\varphi) + \lambda'_t(\mathrm{I} - \mathrm{J})(\varphi)
$$
for each $\varphi \in \mathrm{PSH}(X,\omega) \cap L^{\infty}(X)$. In case $\lambda_t = 0$ it is easy to see that $\Gamma_{[\beta_t]}([\omega]) = \lambda'_t$, which is linear in $t$. If we assume instead that $\lambda_t > 0$ then clearly $$\lambda_t \mathrm{E}_{\omega}^{\theta}(\varphi) + \lambda'_t(\mathrm{I} - \mathrm{J})(\varphi) \geq \delta (\mathrm{I} - \mathrm{J})(\varphi) - C$$ if and only if $$\mathrm{E}_{\omega}^{\theta}(\varphi) \geq \frac{(\delta - \lambda'_t )}{\lambda_t} (\mathrm{I} - \mathrm{J})(\varphi) - C.$$ 
This in turn implies that
$$
\Gamma_{\beta_t}^{\mathrm{pp}}(\omega) =  \lambda_t \Gamma_{\theta}^{\mathrm{pp}}(\omega) + \lambda'_t = \lambda_t \Gamma_{\theta}^{\mathrm{pp}}(\omega) + \lambda'_t \Gamma_{\omega}^{\mathrm{pp}}(\omega),
$$
where in the last step we have used that $\Gamma_{[\omega]}^{\mathrm{pp}}(\omega) = 1$ (see Lemma \ref{Lemma add omega}). Since the obtained expression is clearly linear in $t$ it must follow that 
$$
\Gamma^{\mathrm{pp}}_{(1-t)\beta_0 + t\beta_1}(\omega) = (1-t)\Gamma^{\mathrm{pp}}_{\beta_0}(\omega) + t\Gamma^{\mathrm{pp}}_{\beta_1}(\omega),
$$ 
since two linear functions whose values coincide at two points are equal everywhere. This concludes the first part of the proof.

Finally, in case $([\beta_0], [\beta_1]) \in \mathrm{Span}([\theta],[\omega])^- \times \mathrm{Span}([\theta],[\omega])^-$ it suffices to apply the same exact argument but replacing $\theta$ everywhere with $\theta^* := -\theta$. Indeed, then $([\beta_0], [\beta_1]) \in \mathrm{Span}([\theta^*],[\omega])^+ \times \mathrm{Span}([\theta^*],[\omega])^+$ and we show as before that the left hand side of \eqref{Equation linearity lemma} is linear for $t \in [0,1]$. But the left hand side of \eqref{Equation linearity lemma} coincides with the expression on the right hand side for $t = 0$ and $t = 1$. Hence equality must hold for all $t \in [0,1]$, finishing the proof.
\end{proof}

\noindent If we view the threshold function $\Gamma^{\mathrm{pp}}$ as defined on cohomology classes (in the sense of Remark \ref{Remark cohomology notation}) it is thus linear in the twisting form on each of the components $\mathrm{Span}([\theta],[\omega])^+$ and $\mathrm{Span}([\theta],[\omega])^-,$ and takes the value $1$ when $[\theta] = [\omega]$, i.e. on the intersection of the positive and negative chambers. We may further note that the stability threshold is continuous but not differentiable at that point. 
This completes the picture when varying the twisting form $\theta$ while keeping the underlying K\"ahler form $\omega$ fixed.  The behaviour of the stability threshold functions as we vary $\omega$ is more complicated. This question is treated below, first in the case of surfaces, and then in higher dimension.

\medskip

\subsection{Proof of main results for surfaces}  
\label{Section main result surfaces}
Following the above description of the threshold function $\Gamma^{\mathrm{pp}}_{\theta}(\omega)$ 
when varying the twisting class $[\theta]$ (see Remark \ref{Remark cohomology notation}) we can find an explicit formula for the above function in many situations of interest. In the case of compact K\"ahler surfaces this builds on the well-known criterion of \cite{Chen2000} (see also \cite[Theorem 1.1]{SongWeinkove}). It states that there is a solution $\omega_{\varphi} \in [\omega]$ to the equation
\begin{equation} \label{Equation J dim 2}
2\theta \wedge \omega_{\varphi} = C_{\theta,\omega} \omega_{\varphi}^2
\end{equation}
if and only if the difference of K\"ahler classes
\begin{equation} \label{Equation Song-Weinkove criterion}
C_{\theta,\omega}[\omega] - [\theta] > 0,
\end{equation}
where the cohomological constant 
$$
C_{\theta,\omega} := 2\frac{[\theta] \cdot [\omega]}{[\omega]^2},$$
see \cite{Chen2000}. We will sometimes refer to the above as the $J_{\theta,\omega}$-equation, making reference to the underlying K\"ahler forms.

Using the above criterion it is possible to understand more precisely for which pairs of K\"ahler 
forms $(\theta,\omega)$ the above equation is solvable, by working entirely on the level of cohomology classes (this should be compared with a computation in \cite{DonaldsonJobservation}). For example, fix the auxiliary K\"ahler form $\theta$ on $X$, and assume that $a \in \partial \mathcal{C}_X$ is a nef but not K\"ahler cohomology class. Let $\omega_t$ be any K\"ahler form on $X$ such that  
$$
[\omega_t] := (1-t)a + t[\theta], \; \; t \in (0,1].
$$
This defines a line segment of cohomology classes in the K\"ahler cone (and we caution the reader that we do not specify anything about representatives of $a$ here, we only work with the element $a \in H^{1,1}(X,\mathbb{R})$). It is then clear that \eqref{Equation Song-Weinkove criterion} holds for the pair $(\theta, \omega_t)$ 
%for every K\"ahler form $\omega_t \in [\omega_t]$, 
precisely if 
\begin{equation} \label{Equation Chen reinterpreted}
C_{\theta,\omega_t} - \inf \{\delta \in \mathbb{R} \; \vert \; [\theta] - \delta[\omega_t] \leq 0 \} > 0 
\end{equation}
We first note the following: 

\begin{lem} \label{Lemma one over t}
Let $\theta$ be a K\"ahler form on $X$ and fix $a \in \mathcal{C}_X$ a K\"ahler class. 
%Let $a, \beta \in \mathcal{C}_X$. 
For $t \in (0,1]$, let $\omega_t$ be K\"ahler forms such that 
$
 [\omega_t] := (1-t)a + t[\theta],
$
and write
$$
\sigma([\theta],[\omega_t]) := \inf \{\delta \in \mathbb{R} \; \vert \; [\theta] - \delta[\omega_t] \leq 0 \}
$$
$$
\mathcal{T}([\theta],[\omega_t]) := \sup \{ \delta \in \mathbb{R} \; \vert \; [\theta] - \delta [\omega_t] \geq 0\}. 
$$
\\
Then we have the formula
$$
\sigma([\theta],[\omega_t]) = \frac{1}{\mathcal{T}([\omega_t], [\theta])} = \frac{1}{(1-t)\mathcal{T}(a,[\theta]) + t} 
$$
and if $a \in \bar{\mathcal{C}}_X$ is a nef but not K\"ahler cohomology class, then
$$
\sigma([\theta],[\omega_t]) = \frac{1}{t}.
$$
\end{lem}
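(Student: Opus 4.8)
The plan is to reduce the entire lemma to the elementary observation that, on the positive reals, the nefness conditions defining $\sigma([\theta],[\omega_t])$ and $\mathcal{T}([\omega_t],[\theta])$ are exchanged by the order-reversing map $\delta \mapsto 1/\delta$. Everything else is bookkeeping with the affine reparametrization coming from $[\omega_t] = (1-t)a + t[\theta]$, together with two standard positivity facts about the nef and Kähler cones.

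First I would establish the reciprocal identity $\sigma([\theta],[\omega_t]) = 1/\mathcal{T}([\omega_t],[\theta])$. Rewriting the defining condition $[\theta] - \delta[\omega_t] \leq 0$ as $\delta[\omega_t] - [\theta] \geq 0$, I would first rule out $\delta \leq 0$: pairing against $[\omega_t]^{n-1}$ gives $\delta[\omega_t]^n - [\theta]\cdot[\omega_t]^{n-1} < 0$, which is impossible for a nef class. For $\delta > 0$ one divides by $\delta$ to see that $\delta[\omega_t] - [\theta] \geq 0$ holds if and only if $[\omega_t] - (1/\delta)[\theta] \geq 0$, i.e. if and only if $1/\delta$ is admissible in the supremum defining $\mathcal{T}([\omega_t],[\theta])$. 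Since $[\omega_t]$ is Kähler, the admissible set for $\mathcal{T}([\omega_t],[\theta])$ contains a neighbourhood of $0$, and pairing against $[\theta]^{n-1}$ and using $[\theta]^n > 0$ shows it is bounded above, so $\mathcal{T}([\omega_t],[\theta])$ is a positive finite number. Applying the order-reversing bijection $\delta \mapsto 1/\delta$ to these two admissible sets then yields $\sigma([\theta],[\omega_t]) = 1/\mathcal{T}([\omega_t],[\theta])$.

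Next I would compute $\mathcal{T}([\omega_t],[\theta])$ explicitly by substituting $[\omega_t] = (1-t)a + t[\theta]$. For $t \in (0,1)$ one has $[\omega_t] - \mu[\theta] = (1-t)\bigl(a - \nu[\theta]\bigr)$ under the increasing affine reparametrization $\mu = (1-t)\nu + t$, so nefness of $[\omega_t] - \mu[\theta]$ is equivalent to nefness of $a - \nu[\theta]$. This reparametrization carries the supremum defining $\mathcal{T}(a,[\theta])$ to the one defining $\mathcal{T}([\omega_t],[\theta])$, giving $\mathcal{T}([\omega_t],[\theta]) = (1-t)\mathcal{T}(a,[\theta]) + t$; the case $t = 1$ is immediate since then $[\omega_t] = [\theta]$ and $\mathcal{T}([\theta],[\theta]) = 1$. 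Note that this step needs only that $[\omega_t]$ and $[\theta]$ are Kähler, so it applies verbatim whether $a$ is Kähler or merely nef.

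Finally, for $a$ nef but not Kähler I would show $\mathcal{T}(a,[\theta]) = 0$. The value $\nu = 0$ is admissible because $a$ is nef, whereas no $\nu > 0$ can be admissible: if $a - \nu[\theta] \geq 0$ then $a = (a - \nu[\theta]) + \nu[\theta]$ would be the sum of a nef class and a Kähler class, hence Kähler, contradicting the hypothesis on $a$. Substituting $\mathcal{T}(a,[\theta]) = 0$ into the two identities above then gives $\sigma([\theta],[\omega_t]) = 1/t$. The computations are routine, and the only genuine inputs are the standard positivity facts that a nef class pairs non-negatively against $[\omega]^{n-1}$ for Kähler $[\omega]$ and that the sum of a nef class and a Kähler class is Kähler; the only point requiring care is to restrict the infima and suprema to the positive reals before inverting, so I expect no serious obstacle.
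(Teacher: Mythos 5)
Your proposal is correct and follows essentially the same route as the paper: the reciprocal identity $\sigma([\theta],[\omega_t]) = 1/\mathcal{T}([\omega_t],[\theta])$, the affine reparametrization $[\omega_t]-\mu[\theta] = (1-t)\bigl(a - \tfrac{\mu-t}{1-t}[\theta]\bigr)$ giving linearity of $\mathcal{T}([\omega_t],[\theta])$ in $t$, and $\mathcal{T}(a,[\theta]) = 0$ for $a$ nef but not K\"ahler. You merely supply details the paper leaves as "straightforward" (ruling out $\delta \leq 0$, finiteness and positivity of $\mathcal{T}([\omega_t],[\theta])$, and the nef-plus-K\"ahler-is-K\"ahler argument), which is all sound.
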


\begin{proof}
To streamline notation, let us write $\beta := [\theta]$ and $\gamma_t := [\omega_t]$. It is then straightforward to note that
$$
\sigma(\beta,\gamma_t) := \inf \{\delta > 0 \; : \; \beta - \delta\gamma_t \leq 0 \}= \frac{1}{\sup \{\delta > 0 \; : \; \gamma_t - \delta\beta \geq 0 \}} 
$$
$$
= \frac{1}{\mathcal{T}(\gamma_t,\beta)}.
$$
Moreover
$$
\gamma_t - \delta\beta = (1-t)a - (\delta - t)\beta \geq 0
$$
if and only if
$$
a \geq \frac{\delta - t}{1-t}\beta \geq 0,
$$
using that $t \in (0,1]$ by hypothesis. In other words, the quantity $\mathcal{T}(\gamma_t, \beta)$ is linear, namely
$$
\mathcal{T}(\gamma_t, \beta) = (1-t)\mathcal{T}(a,\beta) + t, 
$$
for $t \in [0,1)$. Finally, if $a \in \partial \mathcal{C}_X$, then $\mathcal{T}(a,\beta) = 0$.  
\end{proof} 

\noindent Combining the above Lemma \ref{Lemma one over t} with \eqref{Equation Chen reinterpreted}, it follows that the condition \eqref{Equation Song-Weinkove criterion} is satisfied for $(\theta,\omega_t)$ if and only if
\begin{equation} \label{Equation R(t)}
R(t) := 2\frac{[\theta] \cdot [\omega_t]}{[\omega_t]^2} - \frac{1}{t} = \frac {([\theta]^2 - a^2)t^2 + 2a^2t - a^2}{t[\omega_t]^2} > 0,
\end{equation}
so $R(t) > 0$ is equivalent to solvability of the $J_{\theta,\omega}$-equation, by \cite{Chen2000}. 
We may immediately note that as long as $a^2 = 0$, then $R(t) > 0$ for all $t \in (0,1]$ and so the $J_{\theta,\omega_t}$-equation admits a solution for all pairs $(\theta,\omega_t)$ as above.
In particular, if $X$ is a compact K\"ahler surface such that the big cone equals the K\"ahler cone, then $a^2 = 0$ for all boundary classes, and hence for every pair of K\"ahler classes $(\theta,\omega)$ on $X$ there is a solution to the $J_{\theta,\omega}$-equation. As a special case of interest, if $[\theta] = -c_1(X) > 0$ it follows from the above and Proposition \ref{Proposition indep of repr} that $X$ admits a cscK metric in every K\"ahler class $[\omega] \in \mathcal{C}_X$ (this should be compared with \cite[Corollary 1.7]{ChenChengII} building on Donaldson's observation \cite{DonaldsonJobservation}). 

On the other hand, if $a^2 > 0$, then it is easy to see that $R(t) < 0$ as $[\omega_t]$ approaches the boundary, i.e. for $t > 0$ small enough. For instance, if we normalize $a \in \partial \mathcal{C}_X$ such that $[\theta]^2 = a^2$, then $R(t) < 0$ precisely for $t < 1/2$. 

\begin{thm} \label{Theorem main for surfaces}
Suppose that $X$ is a compact K\"ahler surface with discrete automorphism group.

\begin{itemize}
\item If $\mathrm{Big}_X = \mathcal{C}_X$, then equation \eqref{Equation J} admits a solution for every pair $(\theta,\omega)$ of K\"ahler forms on $X$. 
\item  If $\mathrm{Big}_X \neq \mathcal{C}_X$, then for every pair $(\theta,\omega)$ of K\"ahler forms on $X$, either
\begin{itemize}
\item The J-equation \eqref{Equation J} admits a solution, or 
\item The stability threshold satisfies
\begin{equation}\label{Equation formula surfaces}
\Gamma_{\theta}^{\mathrm{pp}}(\omega) = 
2 \frac{[\theta] \cdot [\omega]}{[\omega]^2} - \inf \{\delta > 0 \; : \; [\theta] - \delta[\omega] < 0 \}.
\end{equation}
\end{itemize} 
\end{itemize} 
\end{thm}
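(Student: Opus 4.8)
The plan is to treat the two bullets separately and, in the unstable second case, to extract the formula by deforming the twist $\theta$ along the ray $\theta - s\omega$ and locating the value of $s$ at which positivity of the threshold is lost.

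For the first bullet I would reduce existence to Chen's cohomological criterion \eqref{Equation Song-Weinkove criterion}, so that it suffices to prove that $\gamma := C_{\theta,\omega}[\omega] - [\theta]$ is a K\"ahler class whenever $\theta,\omega$ are K\"ahler and $\mathrm{Big}_X = \mathcal{C}_X$. A one-line intersection computation gives $\gamma \cdot [\omega] = [\theta] \cdot [\omega] > 0$ and $\gamma^2 = [\theta]^2 > 0$, so $\gamma$ lies in the forward positive cone and is therefore big; the hypothesis $\mathrm{Big}_X = \mathcal{C}_X$ upgrades big to K\"ahler, and Chen's criterion produces a solution. This is the coordinate-free version of the observation made around \eqref{Equation R(t)}, where writing $[\omega] = (1-t)a + t[\theta]$ for a boundary class $a$ on the far side forces $a^2 = 0$ and hence $R(t) > 0$.

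For the second bullet I would assume the J-equation has no solution, which by Theorem \ref{CollinsGabor properness thm} together with Remark \ref{Remark cohomology notation} is equivalent to $\Gamma^{\mathrm{pp}}_\theta(\omega) \leq 0$, and then compute the threshold. The key tool is the exact linearity $\Gamma^{\mathrm{pp}}_{\theta - s\omega}(\omega) = \Gamma^{\mathrm{pp}}_\theta(\omega) - s$ from Lemma \ref{Lemma add omega}. For each $s$ small enough that $\theta - s\omega$ is still K\"ahler, i.e. $s < \mathcal{T}([\theta],[\omega])$ (a positive number since both forms are K\"ahler), Theorem \ref{CollinsGabor properness thm} makes $\Gamma^{\mathrm{pp}}_{\theta - s\omega}(\omega) > 0$ equivalent to solvability of the J-equation for the pair $(\theta - s\omega, \omega)$, which by \eqref{Equation Song-Weinkove criterion} and the elementary identity $C_{\theta - s\omega,\omega} = C_{\theta,\omega} - 2s$ is equivalent to the K\"ahler positivity of $(C_{\theta,\omega} - s)[\omega] - [\theta]$, i.e. to $s < C_{\theta,\omega} - \sigma([\theta],[\omega])$ with $\sigma$ as in Lemma \ref{Lemma one over t}. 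Thus, for all admissible $s$, the conditions $s < \Gamma^{\mathrm{pp}}_\theta(\omega)$ and $s < C_{\theta,\omega} - \sigma([\theta],[\omega])$ coincide.

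Comparing these two conditions then forces the two threshold values to be equal, which is exactly \eqref{Equation formula surfaces} once one notes that $\inf\{\delta > 0 : [\theta] - \delta[\omega] < 0\} = \sigma([\theta],[\omega])$, the open and closed thresholds agreeing since K\"ahler classes are dense among nef ones. The step I expect to be most delicate is this final comparison: the equivalences coming from Theorem \ref{CollinsGabor properness thm} and \eqref{Equation Song-Weinkove criterion} are only valid while the deformed twist $\theta - s\omega$ remains K\"ahler, so the argument can only probe the threshold through the window $s < \mathcal{T}([\theta],[\omega])$. The crux is therefore to ensure that both $\Gamma^{\mathrm{pp}}_\theta(\omega)$ and the candidate value $C_{\theta,\omega} - \sigma([\theta],[\omega])$ lie inside this window; this is precisely where the no-solution hypothesis is used, as it forces both quantities to be nonpositive while $\mathcal{T}([\theta],[\omega]) > 0$. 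If instead the J-equation were solvable, the candidate value could exceed $\mathcal{T}([\theta],[\omega])$, the deformation would leave the K\"ahler cone before reaching it, and the formula need not hold --- this being the other alternative of the dichotomy.
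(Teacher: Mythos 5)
Your proposal is correct and follows essentially the same route as the paper: both arguments rest on the affine dependence of $\Gamma^{\mathrm{pp}}$ on the twist along the $\omega$-direction (Lemma \ref{Lemma add omega}) combined with the equivalence between positivity of the threshold and Chen's criterion \eqref{Equation Song-Weinkove criterion}, the only difference being that you parametrize the deformation as $\theta - s\omega$ and compare the positivity loci $\{s < \Gamma^{\mathrm{pp}}_{\theta}(\omega)\}$ and $\{s < C_{\theta,\omega}-\sigma([\theta],[\omega])\}$ inside the window $s<\mathcal{T}([\theta],[\omega])$, while the paper uses $\theta_t=(1-t)\theta+t\omega$ and matches two linear functions at $t=1$ and at a zero crossing $t_0$. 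Your coordinate-free treatment of the first bullet (computing $\gamma\cdot[\omega]=[\theta]\cdot[\omega]>0$ and $\gamma^2=[\theta]^2>0$ for $\gamma=C_{\theta,\omega}[\omega]-[\theta]$, so that $\gamma$ is big and hence K\"ahler when $\mathrm{Big}_X=\mathcal{C}_X$) is a clean repackaging of the paper's observation that $R(t)>0$ whenever $a^2=0$.
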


\begin{proof}
Fix K\"ahler forms $\theta$ and $\omega$ on $X$. Consider the path $\theta_t := (1-t)\theta + t\omega$ where $t \in [0,1]$, and introduce the following shorthand notation
$$
P(t) := 2 \frac{[\theta_t] \cdot [\omega]}{[\omega]^2}, \; \;  Q(t) := \sigma([\theta_t],[\omega]), \; \;  R(t) := \Gamma_{\theta_t}^{\mathrm{pp}}(\omega).
$$
It is immediate to check that $P(t)$ and $Q(t)$ are linear, and it follows from Lemma \ref{Lemma add omega} that so is $R(t)$ (having restricted here to $t \in [0,1]$, in general we only get piecewise linearity). In order to show equality of the lhs and rhs it is thus enough to show that the linear functions $L(t) := P(t) - Q(t)$ and $R(t)$ agree for $t = 1$ and for a point $t < 1$. 
First of all, it is immediate to check that $L(1) = R(1) = 1$. Assuming that $L(t) \leq 0$ for some $t \in [0,1)$, there must in particular exist $t_0 \in [0,1)$ for which $L(t_0) = 0$, by continuity. Since $L(t)$ and $R(t)$ are both continuous functions in $t \in (0,1]$ (see Lemma \ref{Lemma piecewise linear}), and moreover $R(t) > 0$ if and only if $L(t) > 0$ (by the existence criterion \eqref{Equation Chen reinterpreted}), it follows that also $R(t_0) = 0$. Hence $L(t)$ and $R(t)$ are both linear and coincide at two points, so $R(t) = L(t)$ for all $t \in [0,1]$. Finally, the first part is an immediate consequence of \eqref{Equation R(t)}.
\end{proof}

\begin{rem} \label{Remark formula hypothesis}
\emph{The above proof works for any $[\omega] \in \mathcal{C}_X$ such that equation \eqref{Equation J} can \emph{not} be solved for \emph{all} pairs $([\theta],[\omega]) \in \mathcal{C}_X \times \mathcal{C}_X$. One way to characterize this is that the formula holds (at least) whenever $$\Gamma_{\theta}^{pp}(\omega) < \mathcal{T}([\theta],[\omega]) := \sup \{\delta \in \mathbb{R} \; \vert \; [\theta] - \delta[\omega] > 0 \}.$$
If we are not in this case then we already know about existence of cscK metrics as well as convergence of the J-flow, so this is not a serious restriction. On the other hand, this adds to the information of Chen's criterion \cite{Chen2000} in a significant way, since being able to measure precisely 'how negative' the threshold value in the unstable cases is important for applications to the constant scalar curvature equation.}
\end{rem}

\begin{thm} \label{Thm main surfaces big version} \emph{(cf. Theorem \ref{Thm main intro surfaces})}
Suppose that $(X,\omega)$ is a compact K\"ahler surface with discrete automorphism group. If $\theta$ is a smooth closed $(1,1)$-form on $X$ such that 
$\Gamma_{\theta}^{\mathrm{pp}}(\omega) < \mathcal{T}([\theta],[\omega])$, then the stability threshold satisfies 
$$
\Gamma_{\theta}^{\mathrm{pp}}(\omega) = 2 \frac{[\theta] \cdot [\omega]}{[\omega]^2} - \inf \{\delta > 0 \; : \; [\theta] - \delta[\omega] < 0 \}.
$$
\end{thm}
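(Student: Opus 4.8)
The plan is to reduce the statement for an arbitrary smooth closed $(1,1)$-form $\theta$ to the already-established case of K\"ahler twisting forms, namely Theorem~\ref{Theorem main for surfaces} together with Remark~\ref{Remark formula hypothesis}, by adding a large multiple of $\omega$ to $\theta$ in order to make it K\"ahler. This is exactly the device used in the proof of Proposition~\ref{Proposition indep of repr}, and it is available here precisely because $\mathrm{E}_{\omega}^{\bullet}$ is linear in the twist (Lemma~\ref{Lemma add omega}).

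Concretely, I would first fix $\lambda > 0$ large enough that $\theta + \lambda\omega > 0$ is a K\"ahler form, and record the two linear shift identities
$$
\Gamma^{\mathrm{pp}}_{\theta + \lambda\omega}(\omega) = \Gamma^{\mathrm{pp}}_{\theta}(\omega) + \lambda, \qquad \mathcal{T}([\theta + \lambda\omega],[\omega]) = \mathcal{T}([\theta],[\omega]) + \lambda,
$$
the first being Lemma~\ref{Lemma add omega} (with $a = 1$, $b = \lambda$) and the second following from the change of variables $\delta \mapsto \delta - \lambda$ in the definition of $\mathcal{T}$. Subtracting, the hypothesis $\Gamma^{\mathrm{pp}}_{\theta}(\omega) < \mathcal{T}([\theta],[\omega])$ is seen to be \emph{equivalent}, independently of $\lambda$, to $\Gamma^{\mathrm{pp}}_{\theta + \lambda\omega}(\omega) < \mathcal{T}([\theta + \lambda\omega],[\omega])$. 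Hence the shifted pair $(\theta + \lambda\omega, \omega)$ of K\"ahler forms satisfies the hypothesis of Theorem~\ref{Theorem main for surfaces} in the sense of Remark~\ref{Remark formula hypothesis}, and the surface formula \eqref{Equation formula surfaces} applies to it.

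It then remains to substitute the resulting expression for $\Gamma^{\mathrm{pp}}_{\theta + \lambda\omega}(\omega)$ into $\Gamma^{\mathrm{pp}}_{\theta}(\omega) = \Gamma^{\mathrm{pp}}_{\theta + \lambda\omega}(\omega) - \lambda$ and simplify. The cohomological pairing term expands as $2\,[\theta + \lambda\omega]\cdot[\omega]/[\omega]^2 = 2\,[\theta]\cdot[\omega]/[\omega]^2 + 2\lambda$, while the Seshadri-type infimum transforms, again via $\delta \mapsto \delta - \lambda$, into $\lambda + \inf\{\delta : [\theta] - \delta[\omega] < 0\}$. All occurrences of $\lambda$ then cancel, as they must since the left-hand side does not depend on $\lambda$, leaving exactly the asserted formula.

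The step that requires the most care, and which I expect to be the main obstacle, is the bookkeeping of the Seshadri-type infimum through the shift. After the change of variables the constraint $\delta > 0$ used for the K\"ahler pair $(\theta + \lambda\omega, \omega)$ becomes $\delta > -\lambda$, and one must verify that for $\lambda$ large this constraint is inactive, so that the infimum genuinely reduces to the threshold $\inf\{\delta : [\theta] - \delta[\omega] < 0\}$ appearing in the statement; finiteness of this threshold is where one uses that $[\omega]$ is K\"ahler, so that $[\theta] - \delta[\omega]$ is eventually strictly negative. The role of the standing hypothesis $\Gamma^{\mathrm{pp}}_{\theta}(\omega) < \mathcal{T}([\theta],[\omega])$ is precisely to guarantee that the shifted K\"ahler pair lands in the case of Theorem~\ref{Theorem main for surfaces} where the formula holds, rather than the alternative where the J-equation is solvable; without it the reduction would only produce the trivial dichotomy.
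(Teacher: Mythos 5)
Your reduction runs on the same engine as the paper's argument, namely linearity of $\Gamma^{\mathrm{pp}}_{\bullet}(\omega)$ in the twisting class (Lemma \ref{Lemma add omega}), and your bookkeeping of the shift --- both the pairing term and the Seshadri-type infimum moving by exactly $\lambda$, with the constraint $\delta>0$ versus $\delta>-\lambda$ correctly checked to be inactive for large $\lambda$ --- is accurate. The problem is the target of the reduction. For $\lambda$ large the shifted pair satisfies $\Gamma^{\mathrm{pp}}_{\theta+\lambda\omega}(\omega)=\Gamma^{\mathrm{pp}}_{\theta}(\omega)+\lambda>0$, so by Theorem \ref{CollinsGabor properness thm} the J-equation for $(\theta+\lambda\omega,\omega)$ \emph{is} solvable, and Theorem \ref{Theorem main for surfaces} then only returns the first branch of its dichotomy; its proof establishes the formula \eqref{Equation formula surfaces} only under the assumption that the linear function $L(t)$ is $\leq 0$ somewhere on $[0,1)$, i.e.\ only in the J-unstable case $\Gamma^{\mathrm{pp}}\leq 0$. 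The case you actually land in --- a K\"ahler twist with $0<\Gamma^{\mathrm{pp}}<\mathcal{T}$ --- is asserted only in Remark \ref{Remark formula hypothesis}, and that assertion is precisely the present theorem specialized to K\"ahler $\theta$. As written, the argument therefore quotes as input a special case of the result it is proving.

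What is missing is the production of a \emph{second} point at which the two affine functions agree when the threshold is positive. The paper's proof supplies this directly on the segment $\theta_t=(1-t)\theta+t\omega$: writing $R(t)=\Gamma^{\mathrm{pp}}_{\theta_t}(\omega)$ and $L(t)$ for the cohomological expression, both are affine with $R(1)=L(1)=1$, and the hypothesis $\Gamma^{\mathrm{pp}}_{\theta}(\omega)<\mathcal{T}([\theta],[\omega])$ (which propagates along the segment, both sides being affine in $t$) is used to locate $t_0\in[0,1)$ with $R(t_0)=L(t_0)$ --- essentially a point where the path meets the locus where the J-equation is critically non-solvable, so that Chen's criterion forces both quantities to vanish there; linearity then gives equality on all of $[0,1]$. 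Your shift trick would become a complete proof if you supplied this step for the K\"ahler pair $(\theta+\lambda\omega,\omega)$ rather than outsourcing it to Remark \ref{Remark formula hypothesis}; but at that point you would be re-proving the theorem for K\"ahler twists, and the additive shift adds nothing beyond what Lemma \ref{Lemma add omega} already gives along the segment joining $[\theta]$ to $[\omega]$.
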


\begin{proof}
The proof is essentially the same as that of Theorem \ref{Theorem main for surfaces}. Indeed, suppose that $\Gamma_{\theta}^{\mathrm{pp}}(\omega) < \mathcal{T}([\theta], [\omega])$. It then follows from Lemma \ref{Lemma add omega} that there exists $t_0 \in [0,1)$ such that $R(t_0) = L(t_0)$ (using the notation of the proof of Theorem \ref{Theorem main for surfaces}). The argument then concludes by noting that $R(1) = L(1)$ and invoking the piecewise linearity (Proposition \ref{Lemma piecewise linear}) as before.
\end{proof}

\noindent For clarity we also write down explicitly a particular case relevant for comparisons with the constant scalar curvature equation on surfaces with ample canonical bundle: 

\begin{cor}
Suppose that $X$ is a compact K\"ahler surface with $c_1(X) < 0$. If $X$ does not admit any solution $\omega_{\varphi} \in [\omega]$ to the J-equation 
$$
-n\mathrm{Ric}(\omega) \wedge \omega_{\varphi} = c \omega_{\varphi}^2,
$$
then 
$$
\Gamma_{-\mathrm{Ric}(\omega)}^{\mathrm{pp}}(\omega) = 
-2 \frac{c_1(X) \cdot [\omega]}{[\omega]^2} - \inf \{\delta > 0 \; : \; -c_1(X) - \delta[\omega] < 0 \}.
$$
\end{cor}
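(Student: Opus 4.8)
The plan is to realize this statement as a direct specialization of Theorem~\ref{Thm main surfaces big version} to the twisting form $\theta := -\mathrm{Ric}(\omega)$, whose cohomology class is $[\theta] = -c_1(X)$ since $[\mathrm{Ric}(\omega)] = c_1(X)$. Substituting $[\theta] = -c_1(X)$ into the formula of Theorem~\ref{Thm main surfaces big version} immediately produces the claimed expression, with $[\theta]\cdot[\omega] = -c_1(X)\cdot[\omega]$ accounting for the signs. Thus the entire content of the corollary reduces to verifying that the hypothesis $\Gamma_{\theta}^{\mathrm{pp}}(\omega) < \mathcal{T}([\theta],[\omega])$ of that theorem is met under the stated assumptions.

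To verify the hypothesis I would argue on two fronts. First, since $c_1(X) < 0$ by assumption, the class $[\theta] = -c_1(X)$ lies in the open Kähler cone $\mathcal{C}_X$; consequently $[\theta] - \delta[\omega]$ remains Kähler, hence in particular nef, for all sufficiently small $\delta > 0$, which forces $\mathcal{T}([\theta],[\omega]) = \sup\{\delta : [\theta] - \delta[\omega] \geq 0\} > 0$. Second, I would show that the non-existence hypothesis forces $\Gamma_{\theta}^{\mathrm{pp}}(\omega) \leq 0$. The key point is that by Proposition~\ref{Proposition indep of repr} the threshold $\Gamma_{\theta}^{\mathrm{pp}}(\omega)$ depends only on the class $[\theta] = -c_1(X)$, so I may replace the (not necessarily positive) form $-\mathrm{Ric}(\omega)$ by a genuine Kähler representative of $-c_1(X)$ without changing its value. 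For such a representative Theorem~\ref{CollinsGabor properness thm} applies and gives that $\Gamma_{\theta}^{\mathrm{pp}}(\omega) > 0$ precisely when the J-equation in $[\omega]$ is solvable; since solvability is itself a purely cohomological condition via Chen's criterion~\eqref{Equation Song-Weinkove criterion}, the assumed absence of a solution yields $\Gamma_{\theta}^{\mathrm{pp}}(\omega) \leq 0$. Combining the two observations gives $\Gamma_{\theta}^{\mathrm{pp}}(\omega) \leq 0 < \mathcal{T}([\theta],[\omega])$, as required.

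With the hypothesis in hand, Theorem~\ref{Thm main surfaces big version} applies verbatim and delivers the formula. The only genuinely delicate point, and the step I would be most careful about, is the passage between the specific form $-\mathrm{Ric}(\omega)$ appearing in the stated J-equation and a positive representative of $-c_1(X)$: one must check that both solvability of the J-equation and the value of the threshold are insensitive to this replacement, which is exactly what Proposition~\ref{Proposition indep of repr} together with the cohomological nature of Chen's criterion guarantees. Everything else is a direct substitution, so I do not anticipate any further obstruction.
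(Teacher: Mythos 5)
Your proposal is correct and follows essentially the same route as the paper: the corollary is stated there precisely as the specialization $[\theta]=-c_1(X)$ of Theorem \ref{Thm main surfaces big version} (equivalently, the dichotomy in Theorem \ref{Theorem main for surfaces} after passing to a K\"ahler representative of $-c_1(X)$ via Proposition \ref{Proposition indep of repr}), and your verification that non-solvability forces $\Gamma^{\mathrm{pp}}_{\theta}(\omega)\leq 0 < \mathcal{T}([\theta],[\omega])$ is exactly the observation recorded in Remark \ref{Remark formula hypothesis}. No gaps.
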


\noindent As a direct consequence of Lemma \ref{Lemma one over t} and Theorem \ref{Theorem main for surfaces} we moreover note that in many cases the stability threshold tends to minus infinity as we approach the boundary of the K\"ahler cone: 

\begin{cor}
Let $X$ be a compact K\"ahler surface. Suppose that $\theta$ is a smooth closed $(1,1)$-form on $X$, and suppose that $a \in \partial\mathcal{C}_X$ with $a^2 > 0$. Let $\omega_t$ be any K\"ahler forms on $X$ such that $[\omega_t] := (1-t)a + t[\theta]$, $t \in [0,1]$. Then there is a uniform constant $C > 0$ such that
$$
\Gamma_{\theta}^{\mathrm{pp}}(\omega_t) \leq C - t^{-1}
$$
for all $t \in (0,1)$.
\end{cor}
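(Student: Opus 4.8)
The plan is to apply the explicit formula of Theorem~\ref{Thm main surfaces big version} and reduce the estimate to two elementary facts: that the Seshadri-type quantity $\sigma([\theta],[\omega_t])$ equals $t^{-1}$ along the boundary-approaching segment, and that the leading term $2[\theta]\cdot[\omega_t]/[\omega_t]^2$ stays bounded precisely because $a^2>0$. First I would record the computation of $\sigma$, which is the cohomological heart of the argument. Writing $[\omega_t]=(1-t)a+t[\theta]$ and solving for $[\theta]$ gives $\delta[\omega_t]-[\theta]=(\delta-\tfrac1t)[\omega_t]+\tfrac{1-t}{t}a$. For $\delta\ge t^{-1}$ this is a nonnegative combination of the K\"ahler class $[\omega_t]$ and the nef class $a$, hence nef, and at $\delta=t^{-1}$ it equals $\tfrac{1-t}{t}a\ge 0$; for $\delta<t^{-1}$ it equals $\tfrac{1-t}{t}\bigl(a-s[\omega_t]\bigr)$ with $s=\tfrac{1-\delta t}{1-t}>0$, which cannot be nef, since otherwise $a=(a-s[\omega_t])+s[\omega_t]$ would be a sum of a nef and a K\"ahler class and thus K\"ahler, contradicting $a\in\partial\mathcal{C}_X$. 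Therefore $\sigma([\theta],[\omega_t])=t^{-1}$, exactly as in Lemma~\ref{Lemma one over t}, now for arbitrary $(1,1)$-class $[\theta]$.

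Next I would control the leading term. Since $a^2>0$, the function $t\mapsto[\omega_t]^2=((1-t)a+t[\theta])^2$ is a polynomial that is positive on $(0,1]$ and equals $a^2>0$ at $t=0$, hence is bounded below by a positive constant on $[0,1]$, while the numerator $[\theta]\cdot[\omega_t]$ is bounded. Consequently $t\mapsto 2[\theta]\cdot[\omega_t]/[\omega_t]^2$ extends continuously to $[0,1]$, with value $2[\theta]\cdot a/a^2$ at $t=0$, and is bounded above there by some constant $C_1$. This is exactly the step where the hypothesis $a^2>0$ enters: if instead $a^2=0$ the leading term could blow up, and one would be in the solvable regime (cf.\ the discussion around \eqref{Equation R(t)}).

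Finally I would assemble these. For $t$ small the quantity $L(t):=2[\theta]\cdot[\omega_t]/[\omega_t]^2-t^{-1}$ tends to $-\infty$, so by the solvability criterion \eqref{Equation R(t)} the class is unstable; this places us under the hypothesis $\Gamma_\theta^{\mathrm{pp}}(\omega_t)<\mathcal{T}([\theta],[\omega_t])$ of Theorem~\ref{Thm main surfaces big version}, which then gives $\Gamma_\theta^{\mathrm{pp}}(\omega_t)=L(t)\le C_1-t^{-1}$. For $t$ bounded away from $0$, say $t\in[t_0,1)$, the threshold $\Gamma_\theta^{\mathrm{pp}}(\omega_t)$ is bounded above by continuity along the segment (Theorem~\ref{Theorem main for surfaces}) while $t^{-1}\le t_0^{-1}$, so the inequality holds automatically after enlarging the constant; taking $C$ to be the maximum of the resulting constants yields the uniform bound. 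The main obstacle I anticipate is precisely this last bookkeeping: verifying that the hypothesis $\Gamma_\theta^{\mathrm{pp}}(\omega_t)<\mathcal{T}([\theta],[\omega_t])$ genuinely holds for \emph{all} small $t$ (so that one has the exact formula, and not merely an inequality) without circular reasoning. This is cleanest when $[\theta]$ is nef, where $\mathcal{T}([\theta],[\omega_t])>0$ so that instability ($\Gamma_\theta^{\mathrm{pp}}(\omega_t)\le 0$) immediately forces $\Gamma_\theta^{\mathrm{pp}}(\omega_t)<\mathcal{T}([\theta],[\omega_t])$; the point is that $a^2>0$ is what drives $L(t)\to-\infty$ and hence the instability that triggers the formula.
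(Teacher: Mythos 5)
Your proposal is correct and is exactly the argument the paper intends: the corollary is stated there as a ``direct consequence'' of Lemma \ref{Lemma one over t} (which gives $\sigma([\theta],[\omega_t])=t^{-1}$) and Theorem \ref{Theorem main for surfaces}, with no further proof supplied, and the role of $a^2>0$ is precisely what you say -- it keeps $[\omega_t]^2$ bounded away from zero so that the leading term $2[\theta]\cdot[\omega_t]/[\omega_t]^2$ stays bounded while $\sigma=t^{-1}$ blows up. Your direct computation of $\sigma$ is in fact a little cleaner than the paper's, since it avoids the reciprocal identity and does not need $[\theta]$ K\"ahler.

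The two loose ends you flag can both be closed. For the first (verifying $\Gamma_\theta^{\mathrm{pp}}(\omega_t)<\mathcal{T}([\theta],[\omega_t])$ for small $t$ without circularity): note that the hypothesis that $[\omega_t]=(1-t)a+t[\theta]$ is K\"ahler for $t$ up to $1$ forces $[\theta]$ to be K\"ahler (or at worst nef, if one only assumes this for $t<1$), so $\mathcal{T}\geq 0$ automatically; and instability for small $t$ is obtained non-circularly as follows. If $\Gamma_\theta^{\mathrm{pp}}(\omega_t)\geq 0$, then $\Gamma^{\mathrm{pp}}_{\theta+\epsilon\omega_t}(\omega_t)=\Gamma_\theta^{\mathrm{pp}}(\omega_t)+\epsilon>0$ for every $\epsilon>0$ by Lemma \ref{Lemma add omega}, so by Theorem \ref{CollinsGabor properness thm} the J-equation is solvable for the K\"ahler class $[\theta]+\epsilon[\omega_t]$, and Chen's criterion \eqref{Equation Song-Weinkove criterion} then gives $(C_{\theta,\omega_t}+\epsilon)[\omega_t]-[\theta]>0$, i.e.\ $t^{-1}=\sigma([\theta],[\omega_t])\leq C_{\theta,\omega_t}+\epsilon$; letting $\epsilon\to 0$ contradicts the boundedness of $C_{\theta,\omega_t}$ once $t<1/C_1$. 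This also repairs the strictness issue in your ``nef'' case, where $\mathcal{T}=0$ and $\Gamma\leq 0$ alone would not suffice. For the second loose end, the bound on $[t_0,1)$ should not be attributed to ``continuity'' (Theorem \ref{Theorem main for surfaces} gives no formula in the stable range); instead use the a priori estimate from the proof of Proposition \ref{Prop welldef}, which yields $\Gamma_\theta^{\mathrm{pp}}(\omega_t)\leq C$ whenever representatives with $-C\omega_t\leq\theta\leq C\omega_t$ exist -- and by your own decomposition $\delta[\omega_t]-[\theta]$ is K\"ahler for every $\delta>t^{-1}$, so $C=t_0^{-1}+O(1)$ works uniformly for $t\geq t_0$. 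With these two patches your argument is complete.
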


\noindent This is elaborated on in the examples of Section \ref{Section examples}.

\medskip

\subsection{A formula in higher dimension for manifolds satisfying the Lejmi-Sz\'ekelyhidi conjecture}\label{Section higher dim}
In this section we discuss natural generalizations of Theorem \ref{Thm main intro surfaces} to higher dimensions, and provide such a result for manifolds satisfying a conjecture of Lejmi-Sz\'ekelyhidi 
(see \cite[Conjecture 1]{LejmiGabor}). In order to state it, recall that 
$$
C_{\theta,\omega} := 
n\frac{\int_X \theta \wedge \omega^{n-1}}{\int_X \omega^n}.
$$   
The Lejmi-Sz\'ekelyhidi conjecture then states the following: 

\begin{conj} \label{Conjecture Collins-Szekelyhidi} \emph{\cite{CollinsGabor, LejmiGabor}} Suppose that $(X,\omega)$ is a compact K\"ahler manifold with $\theta$ an auxiliary K\"ahler form on $X$. Then there exists a solution $\omega_{\varphi} \in [\omega]$ to the equation \eqref{Equation J} if and only if 
$$
C_{\theta,\omega} \int_V \omega^p > p\int_V \omega^{p-1} \wedge \theta > 0
$$
for every subvariety $V \subset X$ of dimension $p \leq n -1$.
\end{conj}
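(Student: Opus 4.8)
The plan is to prove the two implications separately. The forward (necessity) direction is elementary and essentially pointwise; the reverse (sufficiency) direction is the substantial analytic statement, and it is here that one genuinely needs the sharp a priori estimates underlying the recent resolution by G. Chen \cite{GaoChen}, or the convex-geometric input available in the toric case of Collins--Sz\'ekelyhidi \cite{CollinsGabor}.

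For necessity, I would start from a solution $\omega_{\varphi} \in [\omega]$ of $\mathrm{Tr}_{\omega_{\varphi}} \theta = C_{\theta,\omega}$ and diagonalize $\theta$ with respect to $\omega_{\varphi}$ at a given point, writing the eigenvalues as $\lambda_1, \dots, \lambda_n > 0$, so that the equation reads $\sum_i \lambda_i = C_{\theta,\omega}$. A direct computation in these coordinates shows that the coefficient of each basis $(p,p)$-form in $C_{\theta,\omega}\, \omega_{\varphi}^p - p\, \theta \wedge \omega_{\varphi}^{p-1}$ equals $p!\,(C_{\theta,\omega} - \sum_{i \in I} \lambda_i)$ for the corresponding index set $I$ of size $p$, and since $p \leq n-1$ and all $\lambda_i > 0$ we have $\sum_{i \in I} \lambda_i < \sum_i \lambda_i = C_{\theta,\omega}$. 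Hence $C_{\theta,\omega}\, \omega_{\varphi}^p - p\, \theta \wedge \omega_{\varphi}^{p-1}$ is a \emph{strictly} positive $(p,p)$-form. Restricting it to an arbitrary $p$-dimensional subvariety $V$ (integrating over its smooth locus) and using $[\omega_{\varphi}] = [\omega]$ yields $C_{\theta,\omega} \int_V \omega^p > p \int_V \theta \wedge \omega^{p-1}$, the final positivity $\int_V \theta \wedge \omega^{p-1} > 0$ being automatic from $\theta, \omega$ K\"ahler.

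For sufficiency I would run the continuity method of Collins--Sz\'ekelyhidi \cite{CollinsGabor} inside Sz\'ekelyhidi's framework for fully non-linear equations admitting a $C$-subsolution. The scheme has three steps: (i) deform along a path of twisting data joining $\theta$ to a reference form for which the equation is manifestly solvable, arranged so that the numerical inequalities persist for every parameter; (ii) show that the Nakai--Moishezon type inequalities over \emph{all} subvarieties force the existence of a $C$-subsolution at each stage; and (iii) feed the subsolution into the $C^0$ and $C^2$ estimates, then close the method via Evans--Krylov and Schauder theory, with openness and closedness of the set of solvable parameters following from the implicit function theorem together with these estimates. The decisive obstacle is step (ii): converting integral positivity over all subvarieties into a \emph{pointwise} subsolution is a delicate concentration-of-mass / envelope problem, and it is exactly this passage that the uniform estimates of \cite{GaoChen} supply in general. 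In the toric situation it suffices to test the torus-invariant subvarieties, the conditions collapse to finitely many explicit linear inequalities on the moment polytope, and the subsolution can be built directly by convex geometry --- the model realized in \cite{CollinsGabor} that I would aim to imitate.
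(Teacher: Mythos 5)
The first thing to say is that the paper does not prove this statement: it appears only as Conjecture \ref{Conjecture Collins-Szekelyhidi} and is used as a \emph{hypothesis} in Theorems \ref{Thm formula higher dimension intro} and \ref{Thm formula higher dimension}. The paper records that the toric case is due to Collins--Sz\'ekelyhidi \cite{CollinsGabor} and that a uniform version was later established by G.~Chen \cite{GaoChen}, but it supplies no argument of its own, so there is no internal proof to compare yours against.

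On the merits of your sketch: the necessity half is correct and essentially complete. Diagonalizing $\theta$ against $\omega_{\varphi}$ and computing the coefficient of each basis $(p,p)$-form in $C_{\theta,\omega}\,\omega_{\varphi}^p - p\,\theta\wedge\omega_{\varphi}^{p-1}$ as $p!\,(C_{\theta,\omega}-\sum_{i\in I}\lambda_i)$ over index sets $|I|=p$ is the standard argument; strong positivity of the resulting $(p,p)$-form gives a strictly positive integral over the smooth locus of $V$, and cohomological invariance transfers this to $\omega$. The sufficiency half, however, is a plan rather than a proof. Your step (ii) --- extracting a $C$-subsolution at every stage of the continuity path from the integral inequalities over all subvarieties --- is not a technical lemma to be "fed into" the $C^0$ and $C^2$ estimates; it is the entire open content of the conjecture, and it is precisely what \cite{GaoChen} was written to supply. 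Moreover, \cite{GaoChen} proves only the \emph{uniform} version, in which the inequalities are assumed to hold with a uniform gap of the form $\epsilon\int_V\omega^p$; this is strictly stronger than the stated hypothesis, and the paper itself is careful about the distinction (see Remark \ref{Remark inf achieved}). Outside the toric case, where only finitely many torus-invariant subvarieties need be tested and the subsolution is built by convex geometry, your outline therefore leaves the decisive step unproved --- as does the paper, which is why the statement is labelled a conjecture.
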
 

\noindent The above conjecture was proven for toric manifolds by T. Collins and G. Sz\'ekelyhidi \cite{CollinsGabor}, but is expected to hold for all compact K\"ahler manifolds (in fact, a proof of a uniform version of this conjecture was published by G. Chen \cite{GaoChen} while this paper was already in preparation). In the surface case, the above reduces to a special case of Chen's theorem \cite{Chen2000} (see also \cite[Theorem 1.1]{SongWeinkove}). That can be seen from the Nakai-Moishezon criterion.

Deducing a generalized formula for stability thresholds in higher dimension essentially follows the same idea as in the surface case, exploiting the piecewise linearity in the twisting argument. Indeed, the abstract principle is to find an expression which is linear in the above mentioned argument, which takes the value $1$ when $[\theta] = [\omega]$, and which is strictly positive precisely when the corresponding $J_{\theta,\omega}$-equation \eqref{Equation J} can be solved. In order to state the main result of this section we first need the following lemma: 

\begin{lem} \label{Lemma inf is finite} For any compact K\"ahler manifold $X$ the quantity
$$
\inf_{V} \frac{ C_{\theta,\omega} \int_V \omega^p - p\int_V \omega^{p-1} \wedge \theta}{{(n-p) \int_V \omega^p}}
$$
is finite, where the infimum is taken over all subvarieties $V \subset X$ of dimension $p \leq n-1$. 
\end{lem}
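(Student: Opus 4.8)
The plan is to bound the displayed ratio below by a constant that does not depend on the subvariety $V$; since the family of subvarieties of dimension $p \le n-1$ is nonempty in the cases of interest (e.g.\ toric or projective $X$), such a uniform lower bound immediately yields finiteness of the infimum. The essential observation is that only the numerator can cause trouble, and it is controlled by a pointwise comparison of $\theta$ with $\omega$.

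First I would record the elementary estimate already exploited in the proof of Proposition~\ref{Prop welldef}: since $\theta$ is a smooth closed $(1,1)$-form and $\omega$ a K\"ahler form on the compact manifold $X$, there is a constant $C > 0$ with $-C\omega \le \theta \le C\omega$ pointwise, i.e.\ both $C\omega \pm \theta \ge 0$ are semipositive. Note also that $C_{\theta,\omega}$ is a fixed topological constant depending only on $[\theta]$ and $[\omega]$.

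Next, fix a subvariety $V \subset X$ of dimension $p$ with $1 \le p \le n-1$. Wedging the semipositive $(1,1)$-forms $C\omega \pm \theta$ with the positive $(p-1,p-1)$-form $\omega^{p-1}$ produces semipositive $(p,p)$-forms, whose integrals against the integration current $[V]$ (equivalently, over $V_{\mathrm{reg}}$) are nonnegative. This gives the two-sided bound $\bigl| \int_V \theta \wedge \omega^{p-1} \bigr| \le C \int_V \omega^p$. Since $\omega$ is K\"ahler we have $\int_V \omega^p > 0$ and $n - p \ge 1$, so
\[
\frac{C_{\theta,\omega}\int_V \omega^p - p\int_V \theta\wedge\omega^{p-1}}{(n-p)\int_V\omega^p} \;\ge\; \frac{C_{\theta,\omega} - pC}{n-p}.
\]
The right-hand side depends only on $p$ and the fixed constants $C_{\theta,\omega}, C$, while $p$ ranges over the finite set $\{1,\dots,n-1\}$; taking the minimum over $p$ furnishes a finite lower bound $m := \min_{1 \le p \le n-1}(C_{\theta,\omega}-pC)/(n-p)$ valid for every $V$, so the infimum is $\ge m > -\infty$.

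Rather than a serious obstacle, the point requiring care is the uniformity of the bound across the (infinite, unbounded-dimension) family of all subvarieties at once. This dissolves because the estimate for each fixed dimension $p$ is manifestly independent of the particular $V$ and there are only finitely many admissible dimensions, so no compactness argument over the space of subvarieties is needed. The only genuine technicality is integrating semipositive $(p,p)$-forms over a possibly singular $V$, which is handled in the standard way by pairing with the positive closed current of integration $[V]$.
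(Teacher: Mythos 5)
Your argument is correct, but it follows a genuinely different route from the paper's. The paper does not estimate the intersection numbers directly: it sets $L_V([\theta],[\omega])$ equal to the displayed ratio, notes $L_V([\omega],[\omega])=1$ and that $s\mapsto L_V([\theta_s],[\omega])$ is affine along $\theta_s=(1-s)\omega+s\theta$, then uses the bound $|\mathrm{E}_{\omega}^{\theta}|\leq C\,\mathrm{E}_{\omega}^{\omega}+C$ from Proposition \ref{Prop welldef} to get properness of $\mathrm{E}_{\omega}^{\theta_s}$ for small $s$, hence (via the known necessity direction of the Lejmi--Sz\'ekelyhidi criterion) $L_V([\theta_s],[\omega])>0$ for all $V$ and all $s<C'_{\theta}$; an affine function equal to $1$ at $s=0$ and positive up to $s=C'_\theta$ is then bounded below at $s=1$ uniformly in $V$. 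You instead take the pointwise domination $-C\omega\leq\theta\leq C\omega$, integrate the semipositive forms $(C\omega\pm\theta)\wedge\omega^{p-1}$ against the positive current $[V]$ to get $\bigl|\int_V\theta\wedge\omega^{p-1}\bigr|\leq C\int_V\omega^p$, and conclude with a minimum over the finitely many dimensions $p$. Your version is more elementary and self-contained: it needs no properness statement, no energy functionals, and no input from the J-flow literature, only the positivity of integration currents. The paper's version is less direct but is the same linearity-in-the-twist mechanism (Lemma \ref{Lemma piecewise linear}) that drives all the main theorems, so it costs nothing extra in context. Two minor points to tidy in your write-up: the case $p=0$ (if admitted) gives the constant value $C_{\theta,\omega}/n$ and should be included in the minimum, and, as you note, on a compact K\"ahler manifold with no proper positive-dimensional subvarieties the infimum is over the empty set; the substantive content of the lemma, which both proofs establish, is the uniform lower bound.
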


\begin{proof}
Introduce the shorthand notation
$$
L_V([\theta],[\omega]) := \frac{ C_{\theta,\omega} \int_V \omega^p - p\int_V \omega^{p-1} \wedge \theta}{{(n-p) \int_V \omega^p}},
$$
and observe that for any $[\omega] \in \mathcal{C}_X$, and any subvariety $V \subset X$, we have $L_V([\omega],[\omega]) = 1$. For any fixed $V \subset X$ and any pair $(\theta,\omega)$ of K\"ahler forms on $X$, let $\theta_s := (1-s)\omega + s\theta$, with $s \in [0,1]$. Then clearly the function $s \mapsto L_V([\theta_s],[\omega])$ is linear. Moreover, as in Proposition \ref{Prop welldef} there is a uniform constant $C > 0$ such that $$|\mathrm{E}_{\omega}^{\theta}(\varphi)| \leq C\mathrm{E}_ {\omega}^{\omega}(\varphi) - C.$$
It follows from this that $\mathrm{E}_{\omega}^{\theta_s}$ is proper for $s \in [0,1]$ small enough, and hence there must exist a constant $C_{\theta}' > 0$ such that $L_V([\theta_s],[\omega]) > 0$ for all $V \subset X$ and all $s < C'_{\theta}$. But then
$$
L_V([\theta],[\omega]) \geq -C_{\theta}'
$$ 
for all smooth $(1,1)$-forms on $X$ and all $V \subset X$ by linearity (Lemma \ref{Lemma piecewise linear}). In particular it follows that $\inf_V L_V([\theta],[\omega])$ is finite whenever $([\theta],[\omega]) \in H^{1,1}(X,\mathbb{R}) \times \mathcal{C}_X$. 
\end{proof}

\noindent The precise formulation of our main result in higher dimension $n \geq 2$ is then as follows: 

\begin{thm} \label{Thm formula higher dimension}
Suppose that $(X,\omega)$ is a compact K\"ahler manifold and assume that the Lejmi-Sz\'ekelyhidi conjecture holds for $X$ \emph{(e.g. $X$ toric)}. Let $\theta$ be a smooth closed $(1,1)$-form on $X$ such that $\Gamma^{\mathrm{pp}}_{\theta}(\omega) < \mathcal{T}([\theta],[\omega])$. Then we have the formula
$$
\Gamma^{\mathrm{pp}}_{\theta}(\omega) = \inf_{V} \frac{ C_{\theta,\omega} \int_V \omega^p - p\int_V \omega^{p-1} \wedge \theta}{{(n-p) \int_V \omega^p}},
$$ 
where the infimum is taken over all subvarieties $V \subset X$ of dimension $p \leq n - 1$. 
\end{thm}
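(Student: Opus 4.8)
The plan is to reduce everything to the positivity criterion for the $J$-equation, exploiting that both sides of the claimed identity transform linearly under the shift $\theta \mapsto \theta - \delta\omega$. Write $L_V([\theta],[\omega])$ for the quantity appearing under the infimum, as in Lemma \ref{Lemma inf is finite}, and record the two structural facts I will lean on. First, $L_V$ is linear (homogeneous) in the twisting argument, with $L_V([\omega],[\omega]) = 1$ for every $V$ (this last identity is exactly the computation in the proof of Lemma \ref{Lemma inf is finite}), so that $L_V([\theta]-\delta[\omega],[\omega]) = L_V([\theta],[\omega]) - \delta$. Second, by Lemma \ref{Lemma add omega} one has $\Gamma^{\mathrm{pp}}_{\theta-\delta\omega}(\omega) = \Gamma^{\mathrm{pp}}_{\theta}(\omega) - \delta$ for every $\delta \in \mathbb{R}$. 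Thus both $\delta \mapsto \Gamma^{\mathrm{pp}}_{\theta-\delta\omega}(\omega)$ and $\delta \mapsto \inf_V L_V([\theta]-\delta[\omega],[\omega])$ are affine of slope $-1$, vanishing respectively at $\delta = \Gamma^{\mathrm{pp}}_{\theta}(\omega)$ and at $\delta = \inf_V L_V([\theta],[\omega])$, and the entire content of the theorem is that these two vanishing points coincide.

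Next I would set up the positivity dictionary on the range of $\delta$ where it is legitimate to apply the solvability criteria. For $\delta < \mathcal{T}([\theta],[\omega])$ the class $[\theta] - \delta[\omega]$ is K\"ahler, since it may be written as the nef class $[\theta]-\mathcal{T}([\theta],[\omega])[\omega]$ plus the strictly positive multiple $(\mathcal{T}([\theta],[\omega])-\delta)[\omega]$ of a K\"ahler class. I may therefore fix a K\"ahler representative $\theta' \in [\theta]-\delta[\omega]$ and, by Proposition \ref{Proposition indep of repr}, replace $\theta-\delta\omega$ by $\theta'$ without changing the threshold. For such $\delta$, Theorem \ref{CollinsGabor properness thm} gives $\Gamma^{\mathrm{pp}}_{\theta-\delta\omega}(\omega) > 0$ if and only if the $J$-equation for $(\theta',\omega)$ is solvable, and the Lejmi-Sz\'ekelyhidi conjecture (Conjecture \ref{Conjecture Collins-Szekelyhidi}) converts solvability into the numerical condition $L_V([\theta'],[\omega]) > 0$ for all subvarieties $V$, i.e. $L_V([\theta],[\omega]) > \delta$ for all $V$. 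In summary, on the open half-line $\{\delta < \mathcal{T}([\theta],[\omega])\}$ I obtain the equivalence $\Gamma^{\mathrm{pp}}_{\theta-\delta\omega}(\omega) > 0 \iff \inf_V L_V([\theta],[\omega]) \geq \delta$, with strictness for at least one $V$ failing precisely in the non-solvable case.

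With this dictionary I would conclude by a two-sided squeeze, which is exactly where the hypothesis $\Gamma^{\mathrm{pp}}_{\theta}(\omega) < \mathcal{T}([\theta],[\omega])$ enters. For every $\delta < \Gamma^{\mathrm{pp}}_{\theta}(\omega)$ one also has $\delta < \mathcal{T}([\theta],[\omega])$, so $\Gamma^{\mathrm{pp}}_{\theta-\delta\omega}(\omega) = \Gamma^{\mathrm{pp}}_{\theta}(\omega) - \delta > 0$ forces $\inf_V L_V([\theta],[\omega]) \geq \delta$; letting $\delta \uparrow \Gamma^{\mathrm{pp}}_{\theta}(\omega)$ gives $\inf_V L_V([\theta],[\omega]) \geq \Gamma^{\mathrm{pp}}_{\theta}(\omega)$. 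Conversely, because the interval $(\Gamma^{\mathrm{pp}}_{\theta}(\omega),\, \mathcal{T}([\theta],[\omega]))$ is now nonempty, any $\delta$ in it satisfies $\Gamma^{\mathrm{pp}}_{\theta-\delta\omega}(\omega) = \Gamma^{\mathrm{pp}}_{\theta}(\omega) - \delta < 0$, so the equivalence (in its non-solvable direction) yields some $V$ with $L_V([\theta],[\omega]) \leq \delta$, hence $\inf_V L_V([\theta],[\omega]) \leq \delta$; letting $\delta \downarrow \Gamma^{\mathrm{pp}}_{\theta}(\omega)$ gives $\inf_V L_V([\theta],[\omega]) \leq \Gamma^{\mathrm{pp}}_{\theta}(\omega)$. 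The two bounds combine to the asserted equality, where finiteness of the infimum comes from Lemma \ref{Lemma inf is finite} and finiteness of the threshold from Proposition \ref{Prop welldef}.

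I expect the only genuine difficulty, and the main obstacle, to be the K\"ahlerness constraint on $\theta - \delta\omega$: both criteria I invoke are available only for K\"ahler twists, so the positivity dictionary is valid strictly to the left of $\mathcal{T}([\theta],[\omega])$. This is precisely the role of the hypothesis, which places the crossover value $\Gamma^{\mathrm{pp}}_{\theta}(\omega)$ in the interior of the admissible range and thereby supplies values of $\delta$ on \emph{both} sides to run the squeeze; in the complementary regime $\Gamma^{\mathrm{pp}}_{\theta}(\omega) \geq \mathcal{T}([\theta],[\omega])$ (where a solution already exists and the formula is not needed) the argument would lose access to the relevant $\delta$. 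A secondary bookkeeping point, absorbed by the limiting procedure, is that the strict-for-all-$V$ formulation in Conjecture \ref{Conjecture Collins-Szekelyhidi} matches the strict infimum inequality, which is what makes the non-solvable direction produce $\inf_V L_V([\theta],[\omega]) \leq \delta$ at the boundary value.
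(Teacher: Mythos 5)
Your proof is correct and follows essentially the same route as the paper's: both arguments rest on the affine dependence of the threshold and of $\inf_V L_V$ on the twisting class (Lemma \ref{Lemma add omega}, Lemma \ref{Lemma piecewise linear}), the normalization to the value $1$ at $[\theta]=[\omega]$, and the identification of the common zero via Theorem \ref{CollinsGabor properness thm} combined with the Lejmi--Sz\'ekelyhidi criterion. The only difference is cosmetic: you parametrize the line as $[\theta]-\delta[\omega]$ and locate the zero crossing by a two-sided squeeze, while the paper interpolates along $(1-s)[\theta]+s[\omega]$ and matches two affine functions at $s=1$ and at the solvability crossover $s_0$.
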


\noindent In particular, it should be noted that the right hand side is in fact a finite real number, see Lemma \ref{Lemma inf is finite}.  

\begin{rem} \label{Remark inf achieved} 
\emph{In a recent work \cite{GaoChen} which was made available when this note was already in preparation, G. Chen showed a uniform version of Conjecture \ref{Conjecture Collins-Szekelyhidi}. Following their result, the above Theorem \ref{Thm formula higher dimension} can be seen to hold for all compact K\"ahler manifolds with discrete automorphism group, without further restrictions. The proof is the same. 
On the other hand, it should be noted that the assumption that the non-uniform statement in the Lejmi-Sz\'ekeleyhidi conjecture holds is necessary for Theorem \ref{Thm optimal degeneration intro} below. This hypothesis is sometimes implied by the result of G. Chen \cite{GaoChen}, in particular whenever it is enough to test for a finite number of subvarieties.}  
\end{rem}

\begin{proof}[Proof of Theorem \ref{Thm formula higher dimension}]
The proof is very similar to that of Theorem \ref{Theorem main for surfaces}. Consider the path 
$
\theta_s := (1-s)\theta + s\omega,
$
where $s \in [0,1]$. As before it is immediate to check that for each subvariety $V \subset X$ of dimension $p \leq n-1$, the quantities
$$
P_V(s) :=\frac{C_{\theta_s,\omega} \int_V \omega^p - p\int_V \omega^{p-1} \wedge {\theta_s}}{{(n-p) \int_V \omega^p}}, \; \;  Q(s) :=  \Gamma_{\theta_s}^{\mathrm{pp}}(\omega)
$$
are linear for $s \in [0,1]$ (for $Q(s)$ we use Lemma \ref{Lemma piecewise linear} as before). Moreover, we can see that 
$$
L(s) := \inf_V P_V(s)
$$
is also linear for $s \in [0,1]$ (albeit only piecewise linear when we allow  $s > 1$), noting that this holds even if we take the infimum over an infinite number of subvarieties, since we have linearity and $P_V(1) = 1$ for all $V$. With the given normalization $P_V(1) = 1$ for every $V \subset X$ it also follows that $L(1) = Q(1) = 1$. Finally, by hypothesis $[\omega] \in \mathcal{C}_X$ admits no solution to the $\mathrm{J}$-equation, so by Theorem \ref{CollinsGabor properness thm} (see \cite[Proposition 21 and 22]{CollinsGabor}) we have $\Gamma_{\theta}^{\mathrm{pp}}(\omega) \leq 0$, so $Q(s_0) = 0$ for some $s_0 \in [0,1)$. Moreover, whenever Conjecture \ref{Conjecture Collins-Szekelyhidi} holds, also $L(s_0) = 0$ must hold. Hence $L(s)$ and $R(s)$ are both linear and coincide at two points, so $R(s) = L(s)$ for all $s \in [0,1]$.   
This completes the proof.
\end{proof}

\noindent So far this gives an exact formula for the stability threshold, but compared to the case of surfaces (where there is a single cohomological condition that is easily computed) it is considerably less explicit. Also in the toric case $X$ it suffices to test for the (finite number of) toric subvarieties. In general, however, it is helpful to know that the infimum above is in fact achieved, bringing us closer to actually applying the formula for the optimal lower bound in practice: 

\begin{thm} \label{Thm optimal degeneration intro}
Under the same assumptions, the infimum in Theorem \ref{Thm formula higher dimension} is achieved by a subvariety $V_{min} \subset X$, i.e. 
$$
\Gamma^{\mathrm{pp}}_{\theta}(\omega) = \frac{ C_{\theta,\omega} \int_{V_{min}} \omega^p - p\int_{V_{min}} \omega^{p-1} \wedge \theta}{{(n-p) \int_{V_{min}} \omega^p}}.
$$ 
\end{thm}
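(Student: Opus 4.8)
The plan is to reduce the statement to the \emph{boundary case} of $J$-stability by a suitable shift of the twisting class, and then to invoke the non-uniform Lejmi-Sz\'ekelyhidi conjecture to produce a destabilizing subvariety which turns out to be the minimizer. Write $\Gamma := \Gamma^{\mathrm{pp}}_{\theta}(\omega)$ and consider the shifted form $\theta^* := \theta - \Gamma\omega$. The first task is to observe that $[\theta^*]$ is a K\"ahler class: since by hypothesis $\Gamma < \mathcal{T}([\theta],[\omega])$, we may write $[\theta^*] = \big([\theta] - \mathcal{T}([\theta],[\omega])[\omega]\big) + \big(\mathcal{T}([\theta],[\omega]) - \Gamma\big)[\omega]$, where the first summand is nef and the second is a positive multiple of the K\"ahler class $[\omega]$; hence $[\theta^*] \in \mathcal{C}_X$ and we may fix a K\"ahler representative $\theta^* \in [\theta^*]$.

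Next I would record the two elementary shift identities. Applying Lemma \ref{Lemma add omega} with $(a,b) = (1,-\Gamma)$ gives $\Gamma^{\mathrm{pp}}_{\theta^*}(\omega) = \Gamma - \Gamma = 0$, so by Theorem \ref{CollinsGabor properness thm} and \cite{CollinsGabor} the functional $\mathrm{E}_{\omega}^{\theta^*}$ is \emph{not} coercive and the $J_{\theta^*,\omega}$-equation admits \emph{no} solution in $[\omega]$. On the other hand, a direct computation shows that the quantity $L_V([\theta],[\omega])$ appearing in Theorem \ref{Thm formula higher dimension} satisfies $L_V([\theta - \Gamma\omega],[\omega]) = L_V([\theta],[\omega]) - \Gamma$ for every subvariety $V$. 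Taking the infimum over $V$ and invoking the formula of Theorem \ref{Thm formula higher dimension} for $\theta$ (whose hypothesis $\Gamma < \mathcal{T}([\theta],[\omega])$ holds by assumption) then yields $\inf_V L_V([\theta^*],[\omega]) = \inf_V L_V([\theta],[\omega]) - \Gamma = \Gamma - \Gamma = 0$; in particular $L_V([\theta^*],[\omega]) \geq 0$ for every $V$.

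The decisive step is then to apply the (non-uniform) Lejmi-Sz\'ekelyhidi conjecture to the pair $(\theta^*,\omega)$. Since the $J_{\theta^*,\omega}$-equation has no solution, the contrapositive of Conjecture \ref{Conjecture Collins-Szekelyhidi} produces a subvariety $V_{min} \subset X$ of dimension $p \leq n-1$ with $C_{\theta^*,\omega}\int_{V_{min}}\omega^p \leq p\int_{V_{min}}\theta^* \wedge \omega^{p-1}$, that is $L_{V_{min}}([\theta^*],[\omega]) \leq 0$. Combined with the lower bound $L_V([\theta^*],[\omega]) \geq 0$ just obtained, this forces $L_{V_{min}}([\theta^*],[\omega]) = 0 = \inf_V L_V([\theta^*],[\omega])$, so the infimum is attained at $V_{min}$ for $\theta^*$; undoing the shift via $L_{V_{min}}([\theta],[\omega]) = L_{V_{min}}([\theta^*],[\omega]) + \Gamma = \Gamma$ shows that $V_{min}$ also realizes the infimum for $\theta$, which is the assertion. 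The main obstacle is precisely this decisive step: it is here, and only here, that the full \emph{non-uniform} form of the Lejmi-Sz\'ekelyhidi conjecture is indispensable, since a merely uniform version would not guarantee an actual destabilizing subvariety, consistent with Remark \ref{Remark inf achieved}. A secondary point requiring care is the verification that $[\theta^*]$ is genuinely K\"ahler, so that the conjecture, stated for K\"ahler twists, applies, together with the correct handling of the strict-versus-non-strict inequalities at the stability boundary $\Gamma^{\mathrm{pp}}_{\theta^*}(\omega) = 0$.
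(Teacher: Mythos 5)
Your proof is correct and takes essentially the same route as the paper: the paper interpolates along $\theta_s := (1-s)\theta + s\omega$ to the parameter $s_0$ where the threshold vanishes, and $[\theta_{s_0}]$ is precisely a positive multiple of your shifted class $[\theta - \Gamma\omega]$, so both arguments reduce to the boundary case $\Gamma^{\mathrm{pp}} = 0$, extract a destabilizing subvariety there from the non-uniform Lejmi--Sz\'ekelyhidi conjecture, and transport it back via linearity in the twisting class. Your explicit check that $[\theta^*]$ is K\"ahler (so that the conjecture applies) is a detail the paper leaves implicit, and is a welcome addition.
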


\begin{proof}
Using the same notation as in the proof of Theorem \ref{Thm formula higher dimension}, let $
\theta_s := (1-s)\theta + s\omega$. By the hypothesis $\Gamma^{\mathrm{pp}}_{\theta}(\omega) < \mathcal{T}([\theta],[\omega])$ it follows that $\Gamma^{\mathrm{pp}}_{\theta_{s_0}}(\omega) = 0$ for some $s_0 < 1$. Note moreover that if Conjecture \ref{Conjecture Collins-Szekelyhidi} holds for $X$, then $\Gamma^{\mathrm{pp}}_{\theta_{s_0}}(\omega) = 0$ if and only if 
$$
C_{\theta_{s_0},\omega} \int_{V_{s_0}} \omega^p - p\int_{V_{s_0}} \omega^{p-1} \wedge \theta_{s_0} = 0
$$
for some subvariety $V_{s_0} \subset X$, so the infimum is achieved for $s = s_0$. By piecewise linearity in the twisting form (Lemma \ref{Lemma piecewise linear}), it however follows that the infimum is realized also for all $s < 1$, in particular for $\Gamma^{\mathrm{pp}}_{\theta}(\omega)$. Setting $V_{min} := V_{s_0}$ finishes the proof.  
\end{proof}

\noindent It is interesting to study further the minimizing subvariety $V_{min} \subset X$ above. Although not much is currently known about how to characterize such minimizing subvarieties, it follows immediately from our techniques that if $V_{\theta,\omega}$ is a subvariety that realizes the infimum for the pair $(\theta,\omega)$, then it depends only on the cohomology classes $[\theta]$ and $[\omega]$ (by Remark \ref{Remark cohomology notation}) and the same subvariety realizes the infimum as we vary $[\theta]$, but keep $[\omega]$ fixed, in the following sense (see Section \ref{Section main result surfaces} for the definition of $\mathrm{Span}([\theta],[\omega])^+$): 
\begin{thm} \label{Thm inf achieved}
Suppose that $V_{min} \subset X$ is a minimizing subvariety for the pair $([\theta],[\omega]) \in H^{1,1}(X,\mathbb{R}) \times \mathcal{C}_X$, i.e. 
$$
\Gamma^{\mathrm{pp}}_{\theta}(\omega) = \frac{ C_{\theta,\omega} \int_{V_{min}} \omega^p - p\int_{V_{min}} \omega^{p-1} \wedge \theta}{{(n-p) \int_{V_{min}} \omega^p}}.
$$ 
Assume moreover that $[\theta] \neq [\omega]$. Then the same subvariety $V_{min}$ achieves the infimum for all pairs $([\theta'],[\omega])$ with $[\theta'] \in \mathrm{Span}([\theta],[\omega])^+ := \{a[\theta] + b[\omega] \; : \; a\geq 0, b \in \mathbb{R}\}$. 
\end{thm}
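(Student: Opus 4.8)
The plan is to reduce the statement to an elementary linearity property of the geometric functional $L_V([\theta],[\omega])$ in its twisting argument, in direct analogy with the piecewise linearity of the threshold established in Lemma \ref{Lemma piecewise linear}. Concretely, I would fix the K\"ahler class $[\omega]$ throughout and regard each subvariety $V \subset X$ as fixed, and then study how $L_V([\theta'],[\omega])$ behaves as $[\theta']$ ranges over the half-plane $\mathrm{Span}([\theta],[\omega])^+$. The essential point is that the minimizing property of $V_{min}$ is preserved, uniformly over the (possibly infinite) family of competing subvarieties, under the affine reparametrization of the twisting class.

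First I would record the key linearity. For fixed $[\omega]$ the denominator $(n-p)\int_V \omega^p$ of $L_V$ does not involve $\theta$, while the numerator $C_{\theta,\omega}\int_V\omega^p - p\int_V \omega^{p-1}\wedge\theta$ is a linear function of $[\theta]$, since both $C_{\theta,\omega} = n\int_X \theta\wedge\omega^{n-1}/\int_X\omega^n$ and $\int_V\omega^{p-1}\wedge\theta$ are linear in $[\theta]$. Hence $[\theta]\mapsto L_V([\theta],[\omega])$ is linear for each fixed $V$. Moreover, as already observed in the proof of Lemma \ref{Lemma inf is finite}, the normalization $L_V([\omega],[\omega]) = 1$ holds for every $V$. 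Combining these two facts, for any $a,b\in\mathbb{R}$ I obtain
$$
L_V(a[\theta]+b[\omega],[\omega]) = a\,L_V([\theta],[\omega]) + b\,L_V([\omega],[\omega]) = a\,L_V([\theta],[\omega]) + b,
$$
valid for every subvariety $V\subset X$.

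Finally I would exploit the sign constraint $a\geq 0$, which is where the restriction to $\mathrm{Span}([\theta],[\omega])^+$ becomes essential. By hypothesis $V_{min}$ realizes $\Gamma^{\mathrm{pp}}_\theta(\omega)$, which by Theorem \ref{Thm formula higher dimension} equals $\inf_V L_V([\theta],[\omega])$; thus $L_{V_{min}}([\theta],[\omega])\leq L_V([\theta],[\omega])$ for every $V$. Writing $[\theta'] = a[\theta]+b[\omega]$ with $a\geq 0$ and applying the displayed identity to both $V_{min}$ and an arbitrary competitor $V$ yields
$$
L_{V_{min}}([\theta'],[\omega]) = a\,L_{V_{min}}([\theta],[\omega]) + b \leq a\,L_V([\theta],[\omega]) + b = L_V([\theta'],[\omega]),
$$
the inequality being a consequence of $a\geq 0$. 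Hence $V_{min}$ minimizes $L_V([\theta'],[\omega])$ over all subvarieties and therefore achieves the infimum for the pair $([\theta'],[\omega])$, as claimed. There is no serious obstacle here; the one point deserving attention is that the conclusion genuinely fails for $a<0$, where nonnegativity is lost and minimization would turn into maximization, so the minimizer need not be preserved — this is precisely why the statement is confined to the half-plane $\mathrm{Span}([\theta],[\omega])^+$. The degenerate case $a=0$ is trivial, since then $L_V(b[\omega],[\omega])\equiv b$ makes every subvariety a minimizer, and the hypothesis $[\theta]\neq[\omega]$ merely ensures that $\mathrm{Span}([\theta],[\omega])^+$ is genuinely two-dimensional so that the statement is non-vacuous.
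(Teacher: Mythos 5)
Your proof is correct and follows essentially the same route as the paper's: both rest on linearity of the relevant quantities in the twisting class over $\mathrm{Span}([\theta],[\omega])^+$ together with the normalization $L_V([\omega],[\omega])=1$. The only difference is cosmetic --- the paper phrases its one-line argument via linearity of the threshold $\Gamma^{\mathrm{pp}}_{\theta}(\omega)$ (Lemma \ref{Lemma piecewise linear}) combined with the formula of Theorem \ref{Thm formula higher dimension}, whereas you argue directly with the functions $L_V$, which has the merit of making explicit why the sign condition $a\geq 0$ preserves the minimizer.
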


\begin{proof}
This is an immediate consequence of Proposition \ref{Proposition indep of repr} and linearity of $\Gamma_{\theta}^{\mathrm{pp}}(\omega)$ as we vary $[\theta]$ in $\mathrm{Span}([\theta],[\omega])^+$, see Lemma \ref{Lemma piecewise linear}.
\end{proof}

\begin{rem} \emph{The existence of optimal degenerations is a question of interest in particular in connection with the Yau-Tian-Donaldson conjecture for constant scalar curvature K\"ahler metrics (see e.g. \cite{Donaldsontoric, CDSone, CDStwo, CDSthree, TianYTDconjecture, GaborDatar}). In particular, the above result may be compared with a recent result on existence of minimizing geodesic rays for J-semistable and J-unstable classes in \cite{Mingchen}.}
\end{rem}

\bigskip

\section{Applications part I: The J-equation and Calabi Dream Manifolds}\label{Section applications}
\noindent In this section we continue the discussion in Section \ref{Section main result surfaces} and give first applications and consequences of the explicit formula in Theorem \ref{Thm main surfaces big version}. In particular, we give a more detailed and completely explicit criterion for determining the K\"ahler classes on an arbitrary compact K\"ahler surface that admits a solution to the J-equation. We furthermore discuss applications to Calabi dream manifolds and show that the stability threshold typically tends to minus infinity as the underlying K\"ahler class approaches the boundary of the K\"ahler cone, but nonetheless the entropy compensates and the constant scalar curvature equation can sometimes be solved. We also show that the purely cohomological formula in Theorem \ref{Thm main intro surfaces small version} holds also for the natural definition of algebraic stability threshold corresponding to uniform J-stable. From this we then deduce continuity, and openness of (uniform) J-stability, as well as existence of an optimal 'minimizing' test configuration for uniform J-stability on surfaces.

Note that the list of applications is by no means intended to be exhaustive.

\subsection{Explicit characterization of J-stable classes and applications to Calabi dream manifolds}

\noindent Continuing the discussion in Section \ref{Section main result surfaces}, we are interested in characterizing all compact K\"ahler manifolds for which the J-equation can always be solved (this can be thought of as an analogy to the so called 'Calabi dream manifolds' notion for the constant scalar curvature problem, see \cite{ChenChengII}). We first introduce some terminology: 
\begin{mydef} \label{Definition perfect} 
Let $X$ be a compact K\"ahler manifold such that for every pair $([\theta], [\omega]) \in \mathcal{C}_X \times \mathcal{C}_X$, and every choice of K\"ahler forms $\theta \in [\theta]$ and $\omega \in [\omega]$,  
the $J_{\theta,\omega}$-equation 
\begin{equation}
%\mathrm{Tr}_{\omega}\theta = c
n\theta \wedge \omega_{\varphi}^{n-1} = C_{\theta,\omega} \omega_{\varphi}^n 
\end{equation}
admits a solution $\omega_{\varphi} \in [\omega]$. Then $X$ is said to be \emph{perfect}.
\end{mydef}

\noindent It is clear that being perfect is a purely cohomological property, due to \cite[Theorem 1]{CollinsGabor}. As a first example, it was observed by Donaldson \cite{DonaldsonJobservation} that any compact K\"ahler surface with $-c_1(X) > 0$ and no curves of negative self-intersection are perfect. The condition on negative curves in particular means that the cone $\mathrm{Big}_X$ of big $(1,1)$-cohomology classes on $X$ coincides with the K\"ahler cone. Along the lines of the argument outlined in Section \ref{Section main result surfaces}, we now point out that these are the only possible examples:

\begin{prop} \label{Theorem application 1} 
A compact K\"ahler surface with discrete automorphism group is perfect if and only if it admits no curves of negative self-intersection.
\end{prop}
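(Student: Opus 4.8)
The plan is to translate perfectness into a purely cohomological statement via Chen's criterion \eqref{Equation Song-Weinkove criterion}, and then to read the equivalence off from the structure of the Kähler cone of a surface (the Hodge index theorem together with the Nakai--Moishezon criterion for Kähler surfaces of Buchdahl--Lamari). By Definition \ref{Definition perfect} and \eqref{Equation Song-Weinkove criterion}, $X$ is perfect precisely when the class $\beta := C_{\theta,\omega}[\omega] - [\theta]$ is Kähler for every pair of Kähler classes $([\theta],[\omega])$. I would handle the two implications separately, the heart of the matter being to relate the condition ``$a^2 = 0$ for every boundary class $a \in \partial\mathcal{C}_X$'' to the absence of negative curves.

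For the implication that the absence of negative curves forces perfectness, I would first recall that the Kähler cone is always contained in the forward positive cone $\mathcal{P} := \{\alpha \; : \; \alpha^2 > 0, \ \alpha \cdot \omega > 0\}$, and that by the Nakai--Moishezon criterion for Kähler surfaces the only walls cutting $\mathcal{C}_X$ out of $\mathcal{P}$ are those imposed by curves of negative self-intersection (for $\alpha \in \mathcal{P}$ the inequality $\alpha \cdot C > 0$ is automatic whenever $C^2 \geq 0$, by the light-cone lemma); hence in their absence $\mathcal{C}_X = \mathcal{P}$. The key elementary computation is then that $\beta \cdot [\omega] = [\theta]\cdot[\omega] > 0$ and $\beta^2 = [\theta]^2 > 0$, which places $\beta$ in $\mathcal{P} = \mathcal{C}_X$; thus $\beta$ is Kähler and the $J_{\theta,\omega}$-equation is solvable. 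Since the pair was arbitrary, $X$ is perfect. Equivalently, this direction is the first bullet of Theorem \ref{Theorem main for surfaces}, once one knows that the absence of negative curves forces $a^2 = 0$ on $\partial\mathcal{C}_X$.

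For the converse I would argue by contraposition, producing a failure of perfectness from a single curve $C$ with $C^2 < 0$. By the Hodge index theorem $C^\perp$ contains a class $\gamma$ with $\gamma^2 > 0$, and a small perturbation $\gamma - \varepsilon\omega$ stays in $\mathcal{P}$ while acquiring strictly negative pairing with $C$; hence $\mathcal{C}_X \subsetneq \mathcal{P}$. Since $\mathcal{C}_X$ is an open connected subcone of the open connected cone $\mathcal{P}$ that is not all of it, it cannot be closed in $\mathcal{P}$, so $\partial\mathcal{C}_X \cap \mathcal{P} \neq \emptyset$; any $a$ in this intersection is nef but not Kähler and satisfies $a^2 > 0$. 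Fixing any Kähler $\theta$ and setting $[\omega_t] := (1-t)a + t[\theta]$ as in Lemma \ref{Lemma one over t}, the solvability function \eqref{Equation R(t)} reads $R(t) = \frac{([\theta]^2 - a^2)t^2 + 2a^2 t - a^2}{t[\omega_t]^2}$, whose numerator tends to $-a^2 < 0$ while the denominator tends to $0^+$ as $t \to 0^+$; thus $R(t) < 0$ for small $t$, the equation \eqref{Equation J} has no solution for $(\theta,\omega_t)$, and $X$ is not perfect.

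The main obstacle is the cone-theoretic input of the third paragraph: extracting from the mere existence of a negative curve a nef-but-not-Kähler class of strictly positive self-intersection. This is where the Hodge index theorem and the convexity/connectedness of $\mathcal{P}$ do the real work; one may equivalently package it as the classical equivalence $\mathrm{Big}_X = \mathcal{C}_X \iff X$ has no negative curves (via Zariski decomposition: absent negative curves every pseudoeffective class is nef, whence $\mathrm{Big}_X = \mathcal{C}_X$, while conversely a negative curve produces, as above, a nef, big, non-Kähler class). Once this cone structure is in place, both analytic inputs --- the identities $\beta^2 = [\theta]^2$, $\beta \cdot [\omega] = [\theta]\cdot[\omega]$ and the asymptotics of $R(t)$ --- are entirely routine.
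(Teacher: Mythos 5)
Your proof is correct, and it follows the same overall strategy as the paper: reduce perfectness to a cohomological statement via Chen's criterion \eqref{Equation Song-Weinkove criterion} and then analyse the K\"ahler cone of the surface. The converse direction is essentially identical to the paper's, which also produces a nef non-K\"ahler boundary class $a$ with $a^2>0$ and invokes the asymptotics of $R(t)$ in \eqref{Equation R(t)} near $t=0$; you merely make explicit the Hodge-index/connectedness argument that the paper compresses into the assertion that $\mathrm{Big}_X=\mathcal{C}_X$ is equivalent to all boundary classes having zero volume. The forward direction is where you genuinely diverge: the paper routes every pair $([\theta],[\omega])$ through the line-segment parametrization $[\omega_t]=(1-t)a+t[\theta]$ and the sign of $R(t)$ (the discussion in Section \ref{Section main result surfaces} and the first bullet of Theorem \ref{Theorem main for surfaces}), whereas you verify directly that $\beta:=C_{\theta,\omega}[\omega]-[\theta]$ satisfies $\beta^2=[\theta]^2>0$ and $\beta\cdot[\omega]=[\theta]\cdot[\omega]>0$, hence lies in the positive cone $\mathcal{P}$, which equals $\mathcal{C}_X$ by Buchdahl--Lamari when there are no negative curves. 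This is cleaner: it treats an arbitrary pair at once and sidesteps the (minor but real) question of whether every K\"ahler class sits on such a segment emanating from a boundary class, which the paper's phrasing leaves implicit. Both identities you use are easily checked, and the light-cone lemma step ruling out walls from curves with $C^2\geq 0$ is the standard fact needed to identify $\mathcal{C}_X$ with $\mathcal{P}$.
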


\begin{proof}
The statement that a compact K\"ahler surface $X$ admits no curves of negative self-intersection is equivalent to $\mathrm{Big}_X = \mathcal{C}_X$. This is in turn equivalent to every nef but not K\"ahler class $a \in \partial\mathcal{C}_X$ having zero volume $a^2 = 0.$ In other words, it follows from the discussion in Section \ref{Section main result surfaces} that equation \eqref{Equation J} is solvable for every pair $([\theta], [\omega]) \in \mathcal{C}_X \times \mathcal{C}_X$ if and only if $X$ admits no curves of negative self-intersection. 
\end{proof}

\noindent Making the connection with the (twisted) constant scalar curvature problem, we note that perfect surfaces are always Calabi dream surfaces, by properness of the entropy functional (see \cite{Tian, ChenChengII}). When $X$ is a compact K\"ahler surface such that the K\"ahler cone is strictly contained in the big cone, we moreover characterize completely (in terms of their associated cohomology classes) the pairs of K\"ahler forms on $X$ for which the J-equation is solvable: 

\begin{thm} \label{Theorem application 3}
Let $X$ be a compact K\"ahler surface with discrete automorphism group. Suppose that $\eta$ is a smooth closed $(1,1)$-form on $X$ such that either $[\eta] \in \mathcal{C}_X$ or $[\eta] = 0$, and assume that $-c_1(X) + [\eta] \geq 0$. 
\begin{enumerate}
\item If $\mathrm{Big}_X = \mathcal{C}_X$, then the $J_{\theta,\omega}$-equation admits a solution for every pair of K\"ahler forms $(\theta,\omega)$ on $X$, and there exists an $\eta$-twisted cscK metric in every K\"ahler class on $X$. 
\item If $\mathrm{Big}_X \neq \mathcal{C}_X$, let $\theta$ be a fixed K\"ahler form on $X$, and suppose that $a \in \partial \mathcal{C}_X$ is a nef but not K\"ahler class which is normalized such that $a^2 = [\theta]^2$. Let $\omega_t$ be any K\"ahler forms on $X$ such that $[\omega_t] := (1-t)a + t[\theta]$ for $t \in (0,1]$. Then the $\mathrm{J}_{\theta,\omega}$-equation \eqref{Equation J intro} admits a solution 
precisely if $[\omega]$ belongs to the subcone 
 $$
 \mathcal{I}_{\theta} = \{ \lambda [\omega_t]: \lambda > 0, \; t \in (1/2,1] \}. 
 $$
\end{enumerate}   
\end{thm}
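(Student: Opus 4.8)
The plan is to treat the two parts separately: part (1) reduces to a coercivity statement handled through the Chen--Tian decomposition \eqref{Equation K-energy} together with the perfectness of $X$ (Proposition \ref{Theorem application 1}), while part (2) is a direct computation based on the quantity $R(t)$ from \eqref{Equation R(t)}.

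For part (1), I would first observe that $\mathrm{Big}_X = \mathcal{C}_X$ makes $X$ perfect by Proposition \ref{Theorem application 1}, which immediately gives solvability of the $J_{\theta,\omega}$-equation for every pair of K\"ahler forms. For the twisted cscK assertion, the strategy is to prove coercivity of $\mathrm{M}_{\omega}^{\eta}$ and then invoke Theorem \ref{Theorem ChenCheng main}. Using linearity of $\mathrm{E}_{\omega}^{\theta}$ in $\theta$ and the Chen--Tian formula, I would write
$$
\mathrm{M}_{\omega}^{\eta} = \mathrm{E}_{\omega}^{-\mathrm{Ric}(\omega) + \eta} + \mathrm{H}_{\omega},
$$
so the twist is the single class $[\theta'] := -c_1(X) + [\eta] \geq 0$, nef by hypothesis. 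The first key step is to show $\Gamma^{\mathrm{pp}}_{\theta'}(\omega) \geq 0$: the class $[\theta'] + \epsilon[\omega]$ is K\"ahler, so perfectness gives $\Gamma^{\mathrm{pp}}_{\theta' + \epsilon\omega}(\omega) > 0$, while Lemma \ref{Lemma add omega} identifies this with $\Gamma^{\mathrm{pp}}_{\theta'}(\omega) + \epsilon$; letting $\epsilon \to 0$ yields $\Gamma^{\mathrm{pp}}_{\theta'}(\omega) \geq 0$. The second step, which I expect to be the main obstacle, is to upgrade this to strict coercivity $\Gamma_{\eta}(\omega) > 0$ precisely when $[\theta']$ lies on $\partial\mathcal{C}_X$, so that $[\theta']^2 = 0$ and $\Gamma^{\mathrm{pp}}_{\theta'}(\omega)$ may vanish. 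Here the energy part alone is not coercive, and one must extract a strictly positive contribution from the entropy: using that $\mathrm{H}_{\omega}$ is bounded below by a positive multiple of $(\mathrm{I}_{\omega} - \mathrm{J}_{\omega})$ controlled by Tian's alpha invariant (the same estimate underlying Corollary \ref{Cor existence criterion intro}), the two lower bounds add to give $\Gamma_{\eta}(\omega) \geq \Gamma^{\mathrm{pp}}_{\theta'}(\omega) + c\,\alpha_X([\omega]) > 0$ for some $c > 0$. This provides coercivity of $\mathrm{M}_{\omega}^{\eta}$ and hence the desired $\eta$-twisted cscK metric.

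For part (2), the computation is essentially the one already carried out in Section \ref{Section main result surfaces}. With $\theta$ and $a$ fixed and $[\omega_t] = (1-t)a + t[\theta]$, the $J_{\theta,\omega_t}$-equation is solvable if and only if $R(t) > 0$, by Chen's criterion \eqref{Equation Song-Weinkove criterion}. Substituting the normalization $a^2 = [\theta]^2$ into the numerator $([\theta]^2 - a^2)t^2 + 2a^2 t - a^2$ collapses it to $a^2(2t-1)$, giving
$$
R(t) = \frac{a^2(2t-1)}{t\,[\omega_t]^2}.
$$
Since $a$ is nef but not K\"ahler on a surface with $\mathrm{Big}_X \neq \mathcal{C}_X$, the Hodge index theorem forces $a^2 = [\theta]^2 > 0$, and $t\,[\omega_t]^2 > 0$ for $t \in (0,1]$, so $R(t) > 0$ exactly when $t > 1/2$. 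Finally, solvability of the $J_{\theta,\omega}$-equation is invariant under the scaling $[\omega] \mapsto \lambda[\omega]$, as the criterion \eqref{Equation Song-Weinkove criterion} is unchanged; hence the solvable K\"ahler classes in the two-dimensional cone spanned by $a$ and $[\theta]$ are exactly $\{\lambda[\omega_t] : \lambda > 0,\ t \in (1/2,1]\} = \mathcal{I}_{\theta}$, which is the claim.
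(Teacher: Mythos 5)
Your proposal is correct and follows essentially the same route as the paper: part (1) via perfectness, the $\epsilon$-perturbation $[\theta']+\epsilon[\omega]$ with Lemma \ref{Lemma add omega} to get $\Gamma^{\mathrm{pp}}_{\theta'}(\omega)\geq 0$, and then strict coercivity of the entropy to conclude via Chen--Cheng; part (2) via the sign of $R(t)$ under the normalization $a^2=[\theta]^2$ and scale invariance of Chen's criterion. (Two cosmetic points: the normalization $a^2=[\theta]^2$ already forces $a^2>0$, so the appeal to the Hodge index theorem is unnecessary, and the paper cites properness of $\mathrm{H}_{\omega}$ directly rather than routing through the alpha invariant, though the content is the same.)
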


\begin{rem} \emph{The connection to twisted constant scalar curvature metrics in $(1)$ uses \cite[Theorem 6.1]{ChenChengII}. 
The statement moreover addresses a discussion on Calabi dream surfaces in \cite{ChenChengII}. 
In particular, taking $[\eta] = 0$, this extends \cite[Corollary 1.7]{ChenChengII} to the case when $-c_1(X)$ is nef.}
\end{rem}

\begin{proof}[Proof of Theorem \ref{Theorem application 3}]
We write the proof using the cohomological shorthand notation introduced in Remark \ref{Remark cohomology notation} and Notation \ref{Remark notation}: First note that Proposition \ref{Theorem application 1} gives the first part of $(1)$. For the remaining part, let $\omega$ be any K\"ahler form on $X$, and let
$\rho := -c_1(X) + [\eta]$. By hypothesis we have $\rho \geq 0$, so $\rho_{\epsilon} := -c_1(X) + [\eta] + \epsilon[\omega] > 0$ for every $\epsilon > 0$. Moreover, since $\mathrm{Big}_X = \mathcal{C}_X$ it follows by Proposition \ref{Theorem application 1} and Lemma \ref{Lemma add omega} that
$$
\Gamma_{\rho}^{\mathrm{pp}}([\omega]) + \epsilon = \Gamma_{\rho_{\epsilon}}^{\mathrm{pp}}([\omega]) > 0
$$ 
for every $\epsilon > 0$. Hence $\Gamma_{\rho}^{\mathrm{pp}}([\omega]) \geq 0$ follows. 
Due to \cite[Theorem 6.1]{ChenChengII} and properness of the entropy functional $\mathrm{H}_{\omega}$ (see \cite{Tian}) this implies that the K\"ahler class $[\omega] \in \mathcal{C}_X$ admits an $\eta$-twisted cscK metric. Since $[\omega] \in \mathcal{C}_X$ was arbitrary, this proves $(1)$.

To prove the second part, suppose that $[\theta] \in \mathcal{C}_X$, $a \in \partial \mathcal{C}_X$ and the volume is normalized such that $[\theta]^2 = a^2$. It was then noted in Section \ref{Section main result surfaces} that $\gamma_t := (1-t)a + t[\theta]$, $t \in (0,1]$ admits a solution to the $J_{\theta,\omega_t}$-equation (for all K\"ahler forms $\theta \in [\theta]$ and $\omega_t \in \gamma_t$) precisely when $1/2 < t \leq 1$. Since solvability is independent of rescaling by a positive constant, it follows that $\{ \lambda \gamma_t: \lambda > 0, \; 1/2 < t \leq 1 \} \subseteq \mathcal{I}_{\theta}.$ On the other hand, if $\gamma \in \mathcal{C}_X$ is an arbitrary K\"ahler class, % since again solvability is independent of rescaling, we may 
consider the rescaled class $\tilde{\gamma} := \sqrt{a^2/\gamma^2}\gamma$ and apply the same argument as above. This finishes the proof.
\end{proof}

\noindent Recall moreover that it is shown in \cite{ChenChengII} that if $X$ is a compact K\"ahler surface with no curves of negative self-intersection, then $X$ is a Calabi dream surface. If we take $[\eta] = 0$ in Theorem \ref{Theorem application 3} then this can in particular be seen as an extension of the above result to the situation when $-c_1(X)$ is merely nef: 

\begin{cor}
Suppose that $X$ is a compact K\"ahler surface with discrete automorphism group, which satisfies $-c_1(X) \geq 0$ and $\mathrm{Big}_X = \mathcal{C}_X$. 
%\emph{(i.e. there are no curves of negative self-intersection)}. 
Then it is a Calabi dream surface. 
\end{cor}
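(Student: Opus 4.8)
The plan is to obtain the corollary as the untwisted specialization of Theorem \ref{Theorem application 3}(1). First I would check that the hypotheses match under the choice $[\eta] = 0$: this is exactly the boundary case $[\eta] = 0$ permitted in Theorem \ref{Theorem application 3}, and the condition $-c_1(X) + [\eta] \geq 0$ reduces to the standing assumption $-c_1(X) \geq 0$. Since we also assume $\mathrm{Big}_X = \mathcal{C}_X$ and that $X$ has discrete automorphism group, part (1) of that theorem applies verbatim and produces an $\eta$-twisted cscK metric in every K\"ahler class $[\omega] \in \mathcal{C}_X$.

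It then remains only to identify a $0$-twisted cscK metric with an ordinary one. Setting $\theta = 0$ in the twisted cscK equation \eqref{Equation twisted cscK} gives $-\mathrm{Ric}(\omega_{\varphi}) \wedge \omega_{\varphi}^{n-1} = \lambda_{0,\omega}\, \omega_{\varphi}^n$, which is precisely the constant scalar curvature equation. Hence $X$ carries a cscK metric in every K\"ahler class, i.e. it is a Calabi dream surface. There is no genuine obstacle at this stage; all the analytic weight is carried by Theorem \ref{Theorem application 3}(1), which in turn rests on the nonnegativity of the stability threshold $\Gamma^{\mathrm{pp}}_{-c_1(X)}(\omega)$ (deduced from Proposition \ref{Theorem application 1} and Lemma \ref{Lemma add omega} via a limiting argument in $\epsilon$) together with \cite[Theorem 6.1]{ChenChengII} and the properness of the entropy functional $\mathrm{H}_{\omega}$.
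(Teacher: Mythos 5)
Your proposal is correct and follows exactly the paper's route: the corollary is obtained by specializing Theorem \ref{Theorem application 3}(1) to $[\eta] = 0$, whereupon the $0$-twisted cscK equation \eqref{Equation twisted cscK} reduces to the ordinary constant scalar curvature equation. Your identification of where the analytic content lives (Proposition \ref{Theorem application 1}, Lemma \ref{Lemma add omega}, the limiting argument in $\epsilon$, \cite[Theorem 6.1]{ChenChengII} and properness of $\mathrm{H}_{\omega}$) matches the paper's proof of that theorem.
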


\noindent This is interesting input to a question of X.X. Chen and J. Cheng \cite{ChenChengII} on 'how far the class of Calabi dream surfaces is from the set of minimal surfaces of general type', since indeed we show that there exist Calabi dream surfaces other than those with $-c_1(X) > 0$ and no curves of negative self-intersection (i.e. the ones pointed out in \cite{ChenChengII}). To obtain concrete examples of Calabi dream manifolds, see e.g. the discussion in \cite[Section 2]{ChenChengII}. 

Finally, with applications to properness of the Mabuchi K-energy functional in mind, we consider the case when $c_1(X)$ is not assumed to have a sign. We then note that, regardless of the sign (or lack of sign) of $c_1(X)$, only surfaces with $\mathrm{Big}_X = \mathcal{C}_X$ can be perfect. To make sense of this statement, recall from the discussion in Section \ref{Section threshold properties} (see in particular Proposition \ref{Proposition indep of repr}) that we may naturally extend $$\Gamma^{\mathrm{pp}}: \mathcal{C}_X \times \mathcal{C}_X \rightarrow \mathbb{R}$$ to a functional
$$
\Gamma^{\mathrm{pp}}: H^{1,1}(X,\mathbb{R}) \times \mathcal{C}_X \rightarrow \mathbb{R}, \; \; ([\theta], [\omega]) \mapsto \Gamma^{\mathrm{pp}}_{\theta}(\omega),
$$
i.e. where we allow the twisting form $\theta$ to be any smooth closed $(1,1)$-form on $X$ (note however that we only consider the connection between properness and solvability of the corresponding equations in case the twisting form is K\"ahler, \cite{CollinsGabor, ChenChengII}). 
We then have the following:

\begin{cor} \label{Thm 15}
Let $X$ be a compact K\"ahler surface with discrete automorphism group, and fix any smooth closed $(1,1)$-form $\rho$ on $X$. Suppose that the functional $\mathrm{E}_{\omega}^{\rho}: \mathcal{H} \rightarrow \mathbb{R}$ is coercive for every K\"ahler form $\omega$ on $X$. Then $X$ admits no curve of negative self-intersection. 
\end{cor}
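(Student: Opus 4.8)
The plan is to argue by contraposition: assuming $X$ carries a curve of negative self-intersection, I will produce a single K\"ahler class on which $\mathrm{E}_\omega^\rho$ fails to be coercive, i.e. a class with $\Gamma^{\mathrm{pp}}_\rho(\omega)\le 0$. First I would recall from Proposition \ref{Theorem application 1} and the discussion of Section \ref{Section main result surfaces} that the presence of a negative curve is equivalent to $\mathrm{Big}_X\neq\mathcal{C}_X$, which yields a nef but not K\"ahler class $a\in\partial\mathcal{C}_X$ with $a^2>0$. Because $a$ is nef with $a^2>0$ yet not K\"ahler, the Hodge index theorem together with the Nakai--Moishezon/Demailly--Paun criterion on surfaces gives an irreducible curve $C$ with $a\cdot C=0$ and $C^2<0$.

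Next I would approach $a$ from inside $\mathcal{C}_X$ along $[\omega_t]:=(1-t)a+t[\rho]$ and invoke the explicit formula of Theorem \ref{Thm main surfaces big version} (equivalently, the corollary giving $\Gamma^{\mathrm{pp}}_\theta(\omega_t)\le C-t^{-1}$, applied with twisting form $\theta=\rho$). There $\Gamma^{\mathrm{pp}}_\rho(\omega_t)=2\frac{[\rho]\cdot[\omega_t]}{[\omega_t]^2}-\sigma([\rho],[\omega_t])$, where the first term stays bounded as $t\to 0$ since $a^2>0$, while $\sigma([\rho],[\omega_t])\ge \frac{[\rho]\cdot C}{[\omega_t]\cdot C}$ and $[\omega_t]\cdot C\to 0^{+}$. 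Hence, as soon as $[\rho]\cdot C>0$, one gets $\Gamma^{\mathrm{pp}}_\rho(\omega_t)\to-\infty$, so $\mathrm{E}_{\omega_t}^\rho$ is non-coercive for small $t$ and the contraposition is complete. When $[\rho]\in\mathcal{C}_X$---the case relevant to the applications, e.g. $\rho=-\mathrm{Ric}(\omega)$ with $-c_1(X)>0$---the segment $[\omega_t]$ is automatically K\"ahler and $[\rho]\cdot C>0$ holds for every curve, so this already settles the corollary in the principal situation.

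The main obstacle is the general twist, where two points must be addressed. First, the segment $[\omega_t]$ need not remain in $\mathcal{C}_X$ for a non-K\"ahler $\rho$; this can be arranged by choosing the boundary class $a$ so that $[\rho]-a$ points into the K\"ahler cone, but it may fail in degenerate configurations. Second, and more essentially, the estimate above only forces $\Gamma^{\mathrm{pp}}_\rho(\omega_t)\to-\infty$ when $[\rho]\cdot C>0$; the residual case $[\rho]\cdot C\le 0$ for every negative curve $C$ (for instance $[\rho]$ nef but not K\"ahler and proportional to $a$, where the segment stays on $\partial\mathcal{C}_X$) must be treated separately. Here I would first note that coercivity for every $\omega$ gives $\Gamma^{\mathrm{pp}}_\rho(\omega)>0$ for all $\omega$, and hence---via the inequality $\sigma([\rho],[\omega])\ge\frac{[\rho]\cdot[\omega]}{[\omega]^2}$, which makes the formula value at most $\frac{[\rho]\cdot[\omega]}{[\omega]^2}$---that $[\rho]\cdot[\omega]>0$ for all K\"ahler $\omega$, so $[\rho]$ is pseudoeffective. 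The crux is then to rule out pseudoeffective but degenerate twists: one must show that a nef-but-not-K\"ahler twist already makes $\mathrm{E}_\omega^\rho$ non-coercive for suitable $\omega$ (equivalently, that coercivity for all $\omega$ forces $[\rho]$ to be a genuine K\"ahler class), after which the problem reduces, by Lemma \ref{Lemma add omega}, to the clean case handled in the previous paragraph.
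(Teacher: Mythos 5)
Your treatment of the principal case is correct and is essentially the paper's own route: approach a nef boundary class $a$ with $a^2>0$ along $[\omega_t]=(1-t)a+t[\theta]$ and read off $\Gamma^{\mathrm{pp}}_{\rho}(\omega_t)\le C-t^{-1}$ from Theorem \ref{Thm main surfaces big version}; your curve $C$ with $a\cdot C=0$, $C^2<0$ is just a concrete witness for the blow-up $\sigma([\rho],[\omega_t])=t^{-1}$ already recorded in Lemma \ref{Lemma one over t}. The problem is that the corollary is stated for an \emph{arbitrary} smooth closed $(1,1)$-form $\rho$, and for that generality your proposal stops at an acknowledged gap: you reduce everything to the claim that coercivity of $\mathrm{E}^{\rho}_{\omega}$ for every $\omega$ forces $[\rho]$ to be K\"ahler, and you do not prove this. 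That intermediate claim is both unestablished and stronger than what is needed; it is also delicate, since for degenerate twists (e.g.\ $[\rho]$ nef, non-K\"ahler and orthogonal to the negative curve) the explicit formula is only available under the hypothesis $\Gamma^{\mathrm{pp}}_{\rho}(\omega)<\mathcal{T}([\rho],[\omega])$, which is exactly what may fail there, so the inequality $\sigma\ge[\rho]\cdot C/([\omega_t]\cdot C)$ you rely on gives nothing when $[\rho]\cdot C=0$. As written, your argument proves the corollary only for $[\rho]$ K\"ahler and for classes reducible to that case.

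The paper closes the general case by a different and lighter mechanism: a case analysis on the sign of the twisting class, with the sign-free case handled by choosing the boundary class $a$ so that $[\rho]+\lambda[\omega_t]>0$ for a single $\lambda>0$ and all $t$, and then applying Lemma \ref{Lemma add omega} to write $\Gamma^{\mathrm{pp}}_{\rho}(\omega_t)=\Gamma^{\mathrm{pp}}_{\rho+\lambda\omega_t}(\omega_t)-\lambda$. Since the shifted twist is K\"ahler, the $C-t^{-1}$ estimate applies to it, and subtracting the constant $\lambda$ does not affect the divergence to $-\infty$; no positivity of $[\rho]$, and no positivity of $[\rho]\cdot C$, is ever required. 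If you want to salvage your write-up, replace your final paragraph by this shift argument (and note that the choice of $a$ adapted to $[\rho]$ is where the real content of the reduction lies), rather than attempting to show that $[\rho]$ itself must be K\"ahler.
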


\begin{proof}
Suppose that $X$ does admit a curve of negative self-intersection. It then suffices to show that $\mathrm{E}_{\omega}^{\rho}$ is not coercive for some suitable choice of K\"ahler form $\omega$ on $X$. To see this, first note that if $-c_1(X) > 0$, then this follows from the second part of Theorem \ref{Theorem application 3}. If  $c_1(X) > 0$ then noting that $$\Gamma^{\mathrm{pp}}_{-c_1(X)}(c_1(X)) = -1 < 0$$ is enough to conclude (interpreting the above notation in the sense of Remark \ref{Remark cohomology notation} and Notation \ref{Remark notation}). In case $c_1(X) = 0$ the conclusion is immediate. Finally, it remains to treat the case when the canonical class does not have a sign.  Exploiting the cohomological interpretation of stability thresholds (see Remark \ref{Remark cohomology notation} and Notation \ref{Remark notation}), fix a cohomology class $a \in \partial \mathcal{C}_X$ which is nef but not K\"ahler. Let $\theta$ be a K\"ahler form on $X$ and fix any K\"ahler forms $\omega_t$ on $X$ such that $$[\omega_t] := (1-t)a + t[\theta]$$ for $t \in (0,1]$. Then by Section \ref{Section main result surfaces} the threshold value
$$
\Gamma_{\theta}^{\mathrm{pp}}(\omega_t) \leq C - t^{-1}
$$
where $C > 0$ is a uniform constant. We may moreover assume that $a \in \partial \mathcal{C}_X$ is chosen so that there is a constant $\lambda > 0$, for which $-c_1(X) + \lambda[\omega_t] > 0$ for all the associated K\"ahler classes $[\omega_t]$ and all $t \in (0,1)$. Putting this together and using Lemma \ref{Lemma add omega} it follows that
$$
\Gamma_{-c_1(X)}^{\mathrm{pp}}([\omega_t]) = \Gamma_{-c_1(X) + \lambda [\omega_t]}^{\mathrm{pp}}([\omega_t]) - \lambda < 0
$$
for $t > 0$ small enough. 
\end{proof}

\noindent It is worth emphasizing that if we take $[\rho] = -c_1(X) \in H^{1,1}(X,\mathbb{R})$ (e.g. $\rho := -\mathrm{Ric}(\omega)$), then the exact same proof yields the following:

\begin{cor} \label{Corollary unstable example}  
Let $X$ be a compact K\"ahler surface with discrete automorphism group. If the pluripotential part $\mathrm{E}_{\omega}^{-\mathrm{Ric}(\omega)}$ of the Mabuchi K-energy functional is coercive for every K\"ahler form $\omega$ on $X$, then $X$ admits no curve of negative self-intersection. 
\end{cor}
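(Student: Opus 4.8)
The plan is to prove the contrapositive: assuming that $X$ carries a curve $C$ with $C^2 < 0$, I will exhibit a K\"ahler form $\omega$ for which $\mathrm{E}_{\omega}^{-\mathrm{Ric}(\omega)}$ fails to be coercive. The first step is a purely cohomological reformulation. By the very definition of the pluripotential stability threshold, $\mathrm{E}_{\omega}^{-\mathrm{Ric}(\omega)}$ is coercive if and only if $\Gamma^{\mathrm{pp}}_{-\mathrm{Ric}(\omega)}(\omega) > 0$; and since $[-\mathrm{Ric}(\omega)] = -c_1(X)$ for \emph{every} choice of $\omega$, Proposition \ref{Proposition indep of repr} lets me replace the form $-\mathrm{Ric}(\omega)$ by the fixed cohomology class $-c_1(X)$ and work with $\Gamma^{\mathrm{pp}}_{-c_1(X)}([\omega])$ in the sense of Remark \ref{Remark cohomology notation}. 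It therefore suffices to produce a single K\"ahler class $[\omega]$ with $\Gamma^{\mathrm{pp}}_{-c_1(X)}([\omega]) \le 0$. This is precisely the situation already addressed in the proof of Corollary \ref{Thm 15} with the twisting class $[\rho] = -c_1(X)$, so the argument below runs along the same lines.

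The second step produces the degenerating family. Since $X$ has a curve of negative self-intersection, Proposition \ref{Theorem application 1} gives $\mathrm{Big}_X \ne \mathcal{C}_X$, so by the discussion in Section \ref{Section main result surfaces} there is a nef but not K\"ahler class $a \in \partial\mathcal{C}_X$ with $a^2 > 0$. Fixing a K\"ahler form $\theta$ and setting $[\omega_t] := (1-t)a + t[\theta]$ for $t \in (0,1]$, the analysis of Section \ref{Section main result surfaces} yields a uniform constant $C > 0$ with $\Gamma^{\mathrm{pp}}_{\theta}(\omega_t) \le C - t^{-1}$, so the threshold for the fixed \emph{K\"ahler} twist $\theta$ tends to $-\infty$ as $[\omega_t]$ approaches the boundary.

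The third and decisive step transfers this blow-up from the K\"ahler twist $\theta$ to the a priori non-positive twist $-c_1(X)$. I would choose $\lambda > 0$ (and, if necessary, adjust the boundary class $a$) so that $-c_1(X) + \lambda[\omega_t] > 0$ is K\"ahler along the entire family, i.e. for all small $t$; then Lemma \ref{Lemma add omega} gives
\begin{equation*}
\Gamma^{\mathrm{pp}}_{-c_1(X)}([\omega_t]) = \Gamma^{\mathrm{pp}}_{-c_1(X) + \lambda[\omega_t]}([\omega_t]) - \lambda .
\end{equation*}
Equivalently, once $\Gamma^{\mathrm{pp}}_{-c_1(X)}([\omega_t]) < \mathcal{T}(-c_1(X),[\omega_t])$ one may apply Theorem \ref{Thm main surfaces big version} directly to the class $-c_1(X)$ and read off
\begin{equation*}
\Gamma^{\mathrm{pp}}_{-c_1(X)}([\omega_t]) = 2\frac{-c_1(X)\cdot[\omega_t]}{[\omega_t]^2} - \inf\{\delta > 0 : -c_1(X) - \delta[\omega_t] < 0\} .
\end{equation*}
Because $a^2 > 0$ the intersection term stays bounded as $t \to 0$, so the whole expression diverges to $-\infty$ precisely when the Seshadri-type quantity $\inf\{\delta > 0 : -c_1(X) - \delta[\omega_t] < 0\}$ blows up like $t^{-1}$, which is exactly what the estimate for $\theta$ encodes.

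The main obstacle is this last transfer. For a genuine K\"ahler twist the $t^{-1}$ singularity of the Seshadri quantity was immediate (it equals $1/t$, by Lemma \ref{Lemma one over t}), but $-c_1(X)$ carries no positivity, so I must guarantee two things uniformly in $t$: that $-c_1(X) + \lambda[\omega_t]$ remains K\"ahler as $[\omega_t]$ degenerates to the boundary point $a$ (this is where the freedom to choose $a$ and $\lambda$ enters, using that $a$ is nef so that nef $+$ K\"ahler is K\"ahler), and that the resulting threshold stays below $\lambda$, equivalently that the Seshadri quantity for $-c_1(X)$ still diverges. The latter is controlled by testing against the curve $C$ with $a\cdot C = 0$, whose intersection with $[\omega_t]$ is of order $t$; this forces the relevant $\delta$ to grow like $t^{-1}$ and delivers the divergence. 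Once this is in place, $\Gamma^{\mathrm{pp}}_{-c_1(X)}([\omega_t]) < 0$ for $t$ small, the functional $\mathrm{E}_{\omega_t}^{-\mathrm{Ric}(\omega_t)}$ is not coercive, and the contrapositive is complete.
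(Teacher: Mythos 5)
Your overall framework (contrapositive, reduction to the cohomological threshold $\Gamma^{\mathrm{pp}}_{-c_1(X)}([\omega])$, and the degenerating family $[\omega_t]=(1-t)a+t[\theta]$) is the same as the paper's, which proves this corollary by declaring it ``the exact same proof'' as Corollary \ref{Thm 15}. But you run only the last case of that proof and your justification of the key divergence is incorrect. The decisive claim is that $\sigma(-c_1(X),[\omega_t])=\inf\{\delta : \delta[\omega_t]+c_1(X)>0\}$ blows up like $t^{-1}$ because of the curve $C$ with $a\cdot C=0$. Testing against $C$ only gives $\delta\,[\omega_t]\cdot C > -c_1(X)\cdot C = K_X\cdot C$, i.e. $\delta > K_X\cdot C/(t\,[\theta]\cdot C)$, and this diverges to $+\infty$ only if $K_X\cdot C>0$. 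By adjunction $K_X\cdot C=2g(C)-2-C^2$, so for a $(-1)$-curve this equals $-1$ and for a $(-2)$-curve it equals $0$: the constraint is vacuous. Concretely, on a del Pezzo surface with discrete automorphism group (e.g. $\mathbb{P}^2$ blown up at $\ge 4$ general points) one has $K_X\cdot C<0$ for every curve and $-K_X$ is already K\"ahler, so $\sigma(K_X,[\omega])\le 0$ and $\Gamma^{\mathrm{pp}}_{-c_1(X)}([\omega_t])$ stays \emph{bounded} along your family; the same failure occurs for K3 or Enriques surfaces containing $(-2)$-curves ($c_1(X)=0$), and for some sign-indefinite examples such as the blow-up of a K3 at a point, where the only obstruction near the boundary comes from the Hodge-index/volume condition rather than from the curve. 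So the divergence you rely on is simply false in these cases, even though the corollary itself remains true there.

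The paper closes exactly this gap with a case analysis that you omit: if $c_1(X)>0$ it takes $[\omega]=c_1(X)$ and uses $\Gamma^{\mathrm{pp}}_{-c_1(X)}(c_1(X))=-1<0$ (from $\mathrm{E}^{-\omega}_{\omega}=-(\mathrm{I}_{\omega}-\mathrm{J}_{\omega})$); if $c_1(X)=0$ the threshold is $0$ and coercivity fails trivially; if $-c_1(X)>0$ the twist itself is K\"ahler and Lemma \ref{Lemma one over t} applies directly (this is the one regime where your curve test does work, since then $K_X\cdot C>0$ for every curve); only in the remaining sign-indefinite case does it invoke the degenerating family together with Lemma \ref{Lemma add omega}. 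To repair your write-up you must either reinstate this case analysis or prove the divergence (or at least non-positivity) of the threshold by a mechanism that does not reduce to intersecting with a single negative curve. A secondary, smaller point: you never verify the hypothesis $\Gamma^{\mathrm{pp}}_{-c_1(X)}([\omega_t])<\mathcal{T}(-c_1(X),[\omega_t])$ needed to invoke Theorem \ref{Thm main surfaces big version}, nor that a negative curve orthogonal to your chosen boundary class $a$ actually exists (it does, by Hodge index, since $a^2>0$ and $a$ is nef but not K\"ahler, but this should be said).
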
 

\noindent One of the key features to highlight is that even though the threshold associated with the pluripotential term of the K-energy functional tends to minus infinity as we approach the boundary, this may be compensated for by the entropy part (which is always proper \cite{Tian}). At the moment we can only note that, if the entropy term satisfies
$$
\mathrm{H}_{\omega}(\varphi) \geq \delta(\mathrm{I}_{\omega} - \mathrm{J}_{\omega})(\varphi) - C,
$$ 
and 
$$
%\mathrm{\Gamma}^{\mathrm{pp}}_{-\mathrm{Ric}(\omega)}(\omega) > -\delta,
\mathrm{\Gamma}^{\mathrm{pp}}_{-\mathrm{Ric}(\omega)}(\omega) > -\delta,
$$
then $(X,\omega)$ admits a cscK metric. Clarifying the interaction of the entropy and pluripotential/energy terms seems to be an important question in order to apply the results of this paper to the cscK problem. 

The above strategy can however already be combined with the explicit formula of Theorem \ref{Thm main intro surfaces} to give a criterion for existence of constant scalar curvature K\"ahler metrics using Tian's alpha invariant (see \cite{Tian}) for $(X,\omega)$, which is given by
$$
\alpha_X([\omega]) := \sup \{\alpha > 0 \; : \; \exists C > 0, \int_X e^{-\alpha(\varphi - \sup \varphi)} \omega^n \leq C, \; \forall \varphi \in \mathcal{H}(X,\omega) \}. 
$$
The following consequence of our main results add to the active research in this direction (see e.g. \cite{FLSW, LiShiYao, Dervanalphainvariant}, as well as \cite[Corollary 1.5]{ChenChengII} and references therein):   

\begin{cor} \label{Corollary existence of cscK criterion}
Let $(X,\omega)$ be a compact K\"ahler surface with discrete automorphism group. 
Then $X$ admits a constant scalar curvature K\"ahler metric in $[\omega]$ if the numerical condition
\begin{equation} \label{Equation min equation cscK}
\min \left(- 2 \frac{c_1(X) \cdot [\omega]}{[\omega]^2}  - \sigma(-c_1(X),[\omega]), \mathcal{T}(-c_1(X),[\omega]) \right) 
> -\frac{3}{2} \alpha_X([\omega])
\end{equation}
is satisfied, where $$\sigma(-c_1(X),[\omega]) := \inf \{\delta : -c_1(X) - \delta[\omega] \leq 0 \}$$
and $\mathcal{T}(-c_1(X),[\omega])$ is defined in Lemma \ref{Lemma one over t}. 
\end{cor}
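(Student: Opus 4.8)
The plan is to reduce existence of a cscK metric in $[\omega]$ to coercivity of the Mabuchi functional $\mathrm{M}_\omega = \mathrm{E}_\omega^{-\mathrm{Ric}(\omega)} + \mathrm{H}_\omega$ (by the untwisted case of Theorem \ref{Theorem ChenCheng main}, using that $\mathrm{Aut}(X)$ is discrete), and to produce this coercivity by playing the pluripotential threshold $\Gamma^{\mathrm{pp}}_{-\mathrm{Ric}(\omega)}(\omega)$ against an entropy estimate governed by the alpha invariant. The two inputs I would set up are: (a) the sharp lower bound
$$
\mathrm{H}_\omega(\varphi) \geq \tfrac{n+1}{n}\,\alpha\,(\mathrm{I}_\omega - \mathrm{J}_\omega)(\varphi) - C_\alpha
$$
valid for every $\alpha < \alpha_X([\omega])$ (so the coefficient is $\tfrac{n+1}{n} = \tfrac{3}{2}$ when $n=2$); and (b) the explicit formula of Theorem \ref{Thm main surfaces big version} applied to the twisting form $\theta = -\mathrm{Ric}(\omega)$, whose class is $-c_1(X)$. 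Granting these, the core of the argument is to show that the numerical hypothesis \eqref{Equation min equation cscK} forces
$$
\Gamma^{\mathrm{pp}}_{-\mathrm{Ric}(\omega)}(\omega) > -\tfrac{3}{2}\alpha_X([\omega]),
$$
after which the two estimates add up to genuine coercivity of $\mathrm{M}_\omega$.

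For (a) I would normalise $\sup\varphi = 0$ and apply the Gibbs variational principle (equivalently Jensen's inequality) to the relative entropy $\mathrm{H}_\omega(\varphi)$ of $\omega_\varphi^n/V_\omega n!$ against $\omega^n/V_\omega n!$, with test function $-\alpha\varphi$. The defining property of $\alpha < \alpha_X([\omega])$ bounds $\int_X e^{-\alpha\varphi}\omega^n$ by a constant, so the principle yields $\mathrm{H}_\omega(\varphi) \geq \frac{\alpha}{V_\omega n!}\int_X(-\varphi)\,\omega_\varphi^n - C_\alpha$. The delicate point, and really the only nontrivial computation, is the sharp comparison
$$
\frac{1}{V_\omega n!}\int_X(-\varphi)\,\omega_\varphi^n \;\geq\; \tfrac{n+1}{n}\,(\mathrm{I}_\omega - \mathrm{J}_\omega)(\varphi).
$$
Writing $c_j := -\frac{1}{V_\omega n!}\int_X \varphi\,\omega^j\wedge\omega_\varphi^{n-j} \geq 0$ (nonnegativity from $\varphi \leq 0$), one reads directly from the defining integrals that $(\mathrm{I}_\omega - \mathrm{J}_\omega)(\varphi) = \frac{n}{n+1}c_0 - \frac{1}{n+1}\sum_{j\geq 1}c_j \leq \frac{n}{n+1}c_0$, while $c_0$ is exactly the left-hand side. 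This is precisely where the coefficient $\tfrac{n+1}{n}$ comes from, and where care is needed to avoid the lossier $\mathrm{I}$- or $\mathrm{J}$-type estimates.

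Next I would carry out the threshold bound as a short two-case dichotomy driven by the validity hypothesis $\Gamma^{\mathrm{pp}}_\theta(\omega) < \mathcal{T}([\theta],[\omega])$ of Theorem \ref{Thm main surfaces big version}. The hypothesis \eqref{Equation min equation cscK} says both arguments of the minimum exceed $-\tfrac{3}{2}\alpha_X([\omega])$. If $\Gamma^{\mathrm{pp}}_{-\mathrm{Ric}(\omega)}(\omega) < \mathcal{T}(-c_1(X),[\omega])$, then Theorem \ref{Thm main surfaces big version} applies and $\Gamma^{\mathrm{pp}}_{-\mathrm{Ric}(\omega)}(\omega)$ equals the first argument $-2\frac{c_1(X)\cdot[\omega]}{[\omega]^2} - \sigma(-c_1(X),[\omega])$, which exceeds $-\tfrac{3}{2}\alpha_X([\omega])$ by hypothesis. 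Otherwise $\Gamma^{\mathrm{pp}}_{-\mathrm{Ric}(\omega)}(\omega) \geq \mathcal{T}(-c_1(X),[\omega]) > -\tfrac{3}{2}\alpha_X([\omega])$, again by hypothesis. Thus the two entries of the minimum are exactly what is needed to cover the two regimes in which the explicit formula does or does not apply.

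Finally I would assemble the pieces. Fix $\alpha < \alpha_X([\omega])$ close enough to $\alpha_X([\omega])$ that $\Gamma^{\mathrm{pp}}_{-\mathrm{Ric}(\omega)}(\omega) > -\tfrac{n+1}{n}\alpha$ (possible by the strict inequality just established), and set $\delta := \tfrac{n+1}{n}\alpha$. By definition of the supremum defining $\Gamma^{\mathrm{pp}}_{-\mathrm{Ric}(\omega)}(\omega)$, for any $\delta'$ with $-\delta < \delta' < \Gamma^{\mathrm{pp}}_{-\mathrm{Ric}(\omega)}(\omega)$ there is $C'$ with $\mathrm{E}_\omega^{-\mathrm{Ric}(\omega)}(\varphi) \geq \delta'(\mathrm{I}_\omega - \mathrm{J}_\omega)(\varphi) - C'$; adding (a) gives $\mathrm{M}_\omega(\varphi) \geq (\delta' + \delta)(\mathrm{I}_\omega - \mathrm{J}_\omega)(\varphi) - C''$ with $\delta' + \delta > 0$, so $\mathrm{M}_\omega$ is coercive and Chen--Cheng \cite{ChenChengII} yields the cscK metric. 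The main obstacle is the sharp entropy inequality in (a): the case analysis and the application of the surface formula are bookkeeping built on results already in hand, whereas pinning down the optimal constant $\tfrac{n+1}{n}$ (hence the coefficient $\tfrac32$) hinges on the exact comparison between $\int_X(-\varphi)\omega_\varphi^n$ and $\mathrm{I}_\omega - \mathrm{J}_\omega$.
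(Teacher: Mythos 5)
Your proof is correct and follows essentially the same route as the paper: the entropy bound with coefficient $\tfrac{3}{2}\alpha_X([\omega])$, the Chen--Tian decomposition reducing the problem to $\Gamma^{\mathrm{pp}}_{-\mathrm{Ric}(\omega)}(\omega) > -\tfrac{3}{2}\alpha_X([\omega])$, and the two-case dichotomy according to whether $\Gamma^{\mathrm{pp}}_{-\mathrm{Ric}(\omega)}(\omega) < \mathcal{T}(-c_1(X),[\omega])$ are exactly the paper's argument. The only difference is that you re-derive the entropy estimate (correctly, via the Gibbs variational principle and the comparison $\int_X(-\varphi)\,\omega_\varphi^n \geq \tfrac{n+1}{n}V_\omega n!\,(\mathrm{I}_\omega-\mathrm{J}_\omega)(\varphi)$ for $\sup\varphi=0$), whereas the paper simply cites \cite[Lemma 4.1]{SongWeinkove} for it.
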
 

\begin{proof}
Since we are working on a compact K\"ahler surface it follows from \cite[Lemma 4.1]{SongWeinkove} that there exists a constant $C > 0$ such that the entropy $\mathrm{H}_{\omega}$ satisfies the inequality
$$
\mathrm{H}_{\omega}(\varphi) \geq \frac{3}{2}\alpha_X([\omega])(\mathrm{I}_{\omega} - \mathrm{J}_{\omega})(\varphi) - C
$$
for all $\varphi \in \mathcal{H}$. As an immediate consequence of the Chen-Tian formula \cite{Chen2000} it moreover follows that $[\omega]$ admits a constant scalar curvature K\"ahler metric whenever
$$
\Gamma_{-\mathrm{Ric}(\omega)}^{\mathrm{pp}}(\omega) + \frac{3}{2}\alpha_X([\omega]) > 0. 
$$
Now note that if $\Gamma_{-\mathrm{Ric}(\omega)}^{\mathrm{pp}}(\omega) \geq \mathcal{T}(-c_1(X),[\omega]),$ then the desired conclusion holds, despite the fact that Theorem \ref{Thm main intro surfaces} does not apply. On the other hand, if $\Gamma_{-\mathrm{Ric}(\omega)}^{\mathrm{pp}}(\omega) < \mathcal{T}(-c_1(X),[\omega]),$ then by Theorem \ref{Thm main intro surfaces} the conclusion follows immediately from the above.
\end{proof}

\noindent We need to consider the minimum in \eqref{Equation min equation cscK} because of the eventuality that $c_1(X)$ is not negative. Indeed, the sufficient condition for existence in Corollary \ref{Corollary existence of cscK criterion} should be compared to other properness criteria using the alpha invariant, see for instance \cite{FLSW, LiShiYao, Dervanalphainvariant}, as well as \cite[Corollary 1.5]{ChenChengII} and references therein. The main improvement is expected to happen when $X$ is Fano or $c_1(X)$ has no sign. Higher dimensional analogues may also be obtained, using Theorem \ref{Thm formula higher dimension}.

\subsection{Remarks on the higher dimensional case} 
%\subsection{Solving the J-equation in higher dimension}
We finally discuss the possibilities of finding examples also of higher dimensional manifolds that are perfect (Definition \ref{Definition perfect}), i.e. admit solutions to the $J_{\theta,\omega}$-equation for every pair of K\"ahler forms $(\theta,\omega)$ on $X$. Following the Lejmi-Sz\'ekelyhidi conjecture (Conjecture \ref{Conjecture Collins-Szekelyhidi}) there are more conditions, compared to the surface case, that potentially obstruct such a property from being true. The next result shows that there are indeed serious restrictions on higher dimensional perfect manifolds:
% (see Definition \ref{Definition perfect}):

\begin{prop} Suppose that $X$ is a perfect manifold (Definition \ref{Definition perfect}) for which the Lejmi-Sz\'ekelyhidi conjecture holds. Then every $2$-dimensional subvariety $V \subset X$ is also perfect, i.e. satisfies $\mathrm{Big}_V = \mathcal{C}_V$. 
\end{prop}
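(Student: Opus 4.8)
The plan is to argue by contradiction through the Lejmi--Sz\'ekelyhidi characterisation of solvability. Since $X$ is perfect and Conjecture~\ref{Conjecture Collins-Szekelyhidi} holds, for every pair of K\"ahler forms $(\theta,\omega)$ on $X$ the $J_{\theta,\omega}$-equation is solvable, so that $\Gamma^{\mathrm{pp}}_\theta(\omega)>0$ (Theorem~\ref{CollinsGabor properness thm}) and
$$
C_{\theta,\omega}\int_W \omega^p > p\int_W \theta\wedge\omega^{p-1}
$$
for every subvariety $W\subset X$ of dimension $p\le n-1$. Fix a $2$-dimensional subvariety $V\subset X$ and suppose, towards a contradiction, that $\mathrm{Big}_V\neq\mathcal{C}_V$. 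Working on a resolution of singularities when $V$ is singular, Proposition~\ref{Theorem application 1} shows that $V$ then carries an irreducible curve $C_0$ of negative self-intersection $C_0\cdot_V C_0<0$. I would aim to exhibit K\"ahler forms $(\theta,\omega)$ on $X$ violating the displayed inequality for the one-dimensional subvariety $W=C_0$.

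The key point is that intersection numbers against $C_0$ only involve the restrictions $\theta|_V,\omega|_V$, since $C_0\subset V$. Using the formula of Theorem~\ref{Thm formula higher dimension} and restricting the infimum there to the single curve $C_0$ gives the clean comparison
$$
(n-1)\,\Gamma^{\mathrm{pp}}_\theta(\omega)\;\le\; C_{\theta,\omega}-\frac{\theta\cdot C_0}{\omega\cdot C_0},
$$
so it would suffice to drive the right-hand side negative. Because $C_0\cdot_V C_0<0$, the surface theory of Section~\ref{Section main result surfaces} applies to $V$: by Lemma~\ref{Lemma one over t} there is a nef but not K\"ahler class $a_V\in\partial\mathcal{C}_V$ with $a_V^2>0$ and $a_V\cdot C_0=0$, and pushing $[\omega|_V]$ towards $a_V$ forces $\omega\cdot C_0\to 0$ while $\theta\cdot C_0$ stays bounded below, so that $\theta\cdot C_0/(\omega\cdot C_0)\to+\infty$. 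If along such a family the global constant $C_{\theta,\omega}=n\int_X\theta\wedge\omega^{n-1}/\int_X\omega^n$ stays bounded, then the right-hand side tends to $-\infty$, contradicting $\Gamma^{\mathrm{pp}}_\theta(\omega)>0$ and completing the argument.

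The main obstacle is realising this surface degeneration at the level of the ambient $X$, that is, producing K\"ahler forms $\omega$ on $X$ whose restrictions approach $a_V$ while $C_{\theta,\omega}$ remains bounded. Concretely one wants a big and nef class $\omega_\infty$ on $X$ with $\omega_\infty\cdot C_0=0$ and $\int_X\omega_\infty^n>0$; approaching $\omega_\infty$ sends $\omega\cdot C_0\to 0$ yet keeps $\int_X\omega^n$ bounded away from zero, hence $C_{\theta,\omega}$ bounded. Such an $\omega_\infty$ exists transparently in model cases --- for $X=V\times\mathbb{P}^1$ one may take $\omega_\infty=\pi_V^*a_V+r\,\pi_{\mathbb{P}^1}^*\omega_{\mathbb{P}^1}$, which satisfies $\omega_\infty\cdot C_0=0$ and $\int_X\omega_\infty^n>0$ precisely because $a_V^2>0$ --- but establishing its existence in general is the delicate step, since the negativity of $C_0$ is intrinsic to $V$ and need not correspond to a face of the nef cone of $X$. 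I would attempt to supply $\omega_\infty$ either by pulling back an ample class under a contraction of $C_0$, or, more robustly, through a degeneration to the normal cone of $V\subset X$ --- the same mechanism underlying the optimal test configurations of Theorem~\ref{Thm thresholds coincide intro} --- and carrying this out for arbitrary $X$ is where the real work lies.
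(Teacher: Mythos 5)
Your strategy is essentially the same as the paper's: pass to the contrapositive, use the surface theory of Section \ref{Section main result surfaces} to locate a curve $C_0\subset V$ with $C_0\cdot_V C_0<0$ together with a nef non-K\"ahler class $a_V\in\partial\mathcal{C}_V$ satisfying $a_V\cdot C_0=0$ and $a_V^2>0$, and then try to violate the Lejmi--Sz\'ekelyhidi inequality for $X$ at the one-dimensional subvariety $W=C_0$. (Two small remarks: you do not need the formula of Theorem \ref{Thm formula higher dimension} and its hypothesis $\Gamma^{\mathrm{pp}}_{\theta}(\omega)<\mathcal{T}([\theta],[\omega])$ here --- solvability on $X$ plus Conjecture \ref{Conjecture Collins-Szekelyhidi} already gives the raw inequality $C_{\theta,\omega}\,\omega\cdot C_0>\theta\cdot C_0$ directly; and you are right, and more careful than the paper, to insist on controlling the \emph{ambient} constant $C_{\theta,\omega}$ rather than the surface constant $C_{\theta|_V,\omega|_V}$.) However, the step you yourself flag as ``where the real work lies'' --- producing K\"ahler classes on $X$ whose restrictions to $V$ approach $a_V$, i.e.\ making $\omega\cdot C_0\to 0$ while $C_{\theta,\omega}$ stays bounded --- is never carried out, so the proposal is not a proof. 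The paper's own argument asserts exactly this step without justification (``we can find a K\"ahler form coming from $X$ \dots for which we cannot find a solution to the $J$-equation''), so you have put your finger on precisely the point where the published proof is also incomplete.

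Moreover, that step cannot be carried out in general, so no argument along these lines closes the gap without additional hypotheses. Take $X=\mathbb{P}^3$: it is toric, so Conjecture \ref{Conjecture Collins-Szekelyhidi} holds by \cite{CollinsGabor}, and since $h^{1,1}(X)=1$ every pair of K\"ahler classes is proportional and the $J$-equation is trivially solvable, so $X$ is perfect in the sense of Definition \ref{Definition perfect}. Yet a smooth cubic surface $V\subset\mathbb{P}^3$ contains lines $L$ with $L\cdot_V L=-1$, so $\mathrm{Big}_V\neq\mathcal{C}_V$ and $V$ is not perfect. The restrictions to $V$ of K\"ahler classes of $X$ are all multiples of the hyperplane class and never approach the face of $\mathcal{C}_V$ on which $L$ degenerates; equivalently, $C_{\theta,\omega}\,\omega\cdot L>\theta\cdot L$ holds for every pair coming from $X$, because the negativity of $L$ inside $V$ is invisible to the ambient intersection theory. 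Your proposed remedies do not rescue this: contracting $C_0$ requires $C_0$ to span a contractible extremal ray of $X$ (a line on the cubic surface does not, in $\mathbb{P}^3$), and degenerating to the normal cone of $V$ changes the ambient manifold rather than producing the needed class on $X$. So your honest assessment is in fact the correct conclusion: the degeneration cannot be realised from $X$ in general, and the statement as phrased (testing $V$ against all of $\mathcal{C}_V$ rather than against restrictions from $\mathcal{C}_X$) requires a stronger hypothesis, for instance some surjectivity of the restriction $H^{1,1}(X,\mathbb{R})\to H^{1,1}(V,\mathbb{R})$ onto a neighbourhood of $a_V$.
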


\begin{proof}
Suppose that $V \subset X$ is a submanifold of $X$ of dimension $p =2$, and assume that it is not perfect. Then by Proposition \ref{Theorem application 1} it follows that $\mathrm{Big}_V = \mathcal{C}_V$, and moreover we have a complete description of what K\"ahler classes admit a solution to the J-equation \eqref{Equation J}, by Theorem \ref{Theorem application 3}. In particular, it follows that we can find a K\"ahler form coming from $X$, i.e. $\chi_V := \omega_{\vert V}$ for some K\"ahler form $\omega$ on $X$, for which we cannot find a solution to the $\mathrm{J}_{\theta,\omega}$-equation for all smooth closed $(1,1)$-forms $\theta$ on $X$. In fact, by the Lejmi-Sz\'ekelyhidi conjecture and Theorem \ref{Thm formula higher dimension} combined with the above, there is then a subvariety $W \subset V$ of dimension $0 \leq \mathrm{dim}V \leq \mathrm{dim} V - 1$ such that
$$
C_{\theta,\omega} \int_W \omega^p - p\int_W \omega^{p-1} \wedge \theta \leq 0.
$$ 
But since $W \subset V$ is also a subvariety of $X$, Theorem \ref{Thm formula higher dimension} implies that we cannot find a solution to the $\mathrm{J}$-equation in $[\omega] \in \mathcal{C}_X$ either. Hence $X$ is not perfect. Taking the contrapositive, we see that if $X$ is perfect, then every $2$-dimensional subvariety $V \subset X$ is perfect.
\end{proof}

\subsection{Example I: Computation of the optimal constant for products of smooth curves} \label{Section examples}
In order to give an explicit example we revisit J. Ross' computations for products of smooth irreducible projective curves $X = C \times C$ of genus $g \geq 2$, see \cite{Rossproductofcurves}. In order to recall the setup, let for $i = 1,2$, $\pi_i$ be the projection onto the $i^{th}$ factor, and let $f_i$ be the class of $\pi^{-1}(p)$ in $N^1(X)_{\mathbb{Q}}$ (this is independent of the point $p \in X$ chosen). Let moreover $\delta$ be the class of the diagonal $\Delta \subset C \times C$. As described in \cite{Lazarsfeld} the canonical bundle of $X$ can then be written $K = (2g - 2)(f_1 + f_2)$, and it is ample. In his paper Ross considers the $\mathbb{Q}$-divisor
$$
L_t := tf - \delta',
$$
where $f := f_1 + f_2$ and $\delta' := \delta - f$. They point out that $L_t$ is ample for all large enough $t > 0$, and write 
$$
s_C := \inf \{ t: L_t \; \text{ample} \}.
$$ 
In \cite[Theorem 3.3]{Rossproductofcurves} it is shown that if $C$ admits a branched cover $\pi: C \rightarrow \mathbb{P}^1$ of degree $d$ with $2 \leq d - 1 \leq \sqrt{g}$ (such curves exist if $g \geq 5$, as shown in \cite[Theorem 4.4]{Rossproductofcurves}), then $X = C \times C$ is not slope semistable with respect to $L_t$ for $t > s_C$ sufficiently close to $s_C$. In particular, as $t \rightarrow s_C$ the stability threshold $\Gamma(c_1(L_t))$ corresponding to the Mabuchi K-energy becomes negative. 

It is interesting to note how the corresponding pluripotential stability threshold $\Gamma_{K}^{\mathrm{pp}}(c_1(L_t))$ behaves as $t \rightarrow s_C$ (here the notation should be interpreted as in Remark \ref{Remark cohomology notation} and Notation \ref{Remark notation}). To study this we explicitly compute the optimal properness constant using Theorem \ref{Thm main intro surfaces}. To do this, we first need to compute the infimum of all $\delta > 0$ such that $K - \delta L_t < 0$, or equivalently such that $\delta L_t - K$ is ample. We have
$$
\delta L_t - K = \delta L_t - (2g-2)f = \delta \left[(t - \frac{2g - 2}{\delta})f - \delta' \right]
$$
and by definition this is ample if and only if
$$
t - \frac{2g - 2}{\delta} > s_C
$$ 
i.e. precisely when
$$
\delta < \frac{2g -2}{t - s_C}.
$$
In other words, we have 
$$
\inf \{\delta > 0 \; \vert \; K - \delta L_t < 0 \} = \frac{2g -2}{t - s_C}.
$$
Moreover it follows from \cite{Lazarsfeld, Rossproductofcurves} that $K \cdot L_t = 2t(2g-2)$ and $L_t^2 = 2(t^2 - g)$. Hence by Theorem \ref{Thm main intro surfaces} the formula for the optimal threshold constant is 
$$
\Gamma_{K}^{\mathrm{pp}}(L_t) = \frac{2t(2g-2)}{t^2 - g} - \frac{2g -2}{t - s_C}.
$$
It is immediately clear from this that if $s_C > \sqrt{g}$, then the above threshold tends to minus infinity as $t \rightarrow s_C$. In the slope unstable examples of J. Ross he chose $C$ that admits a branched cover $\pi: C \rightarrow \mathbb{P}^1$ of degree $d$ with $2 \leq d - 1 \leq \sqrt{g}$. As shown in \cite{Kouvidakis} then $s_C = \frac{g}{d-1}$ and we may explicitly compute that 
$$
\Gamma_{K}^{\mathrm{pp}}(L_t) = \frac{2t(2g-2)}{t^2 - g} - \frac{(d-1)(2g -2)}{(d-1)t - g} 
$$
which tends to minus infinity as $t \rightarrow s_C$. 

If instead $s_C = \sqrt{g}$, then we compute 
$$
\Gamma_{K}^{\mathrm{pp}}(L_t) = (2g-2)\frac{1}{t + \sqrt{g}} \rightarrow \frac{g - 1}{\sqrt{g}},
$$
as $t \rightarrow s_C$. For example if $g = 4$ then we get
$
\Gamma_{K}^{\mathrm{pp}}(L_t) = \frac{6}{t + 2},
$
which is always positive, and increasing as $t \rightarrow 2 = \sqrt{g}$. By inspecting the sign of the above expression we can recover a simple proof of \cite[Lemma 4.3]{Rossproductofcurves}. 

The observation that the above threshold tends to minus infinity close to the boundary whenever $s_C = \frac{g}{d-1}$ is consistent with Theorem \ref{Theorem application 3} and the fact that in this situation $X$ admits a curve $Z$ of negative self-intersection (see \cite[Lemma 3.1]{Rossproductofcurves}).

\subsection{Example II: Blowups of Calabi dream manifolds} \label{Example blowup}
We here use the cohomological notation in the sense of Remark \ref{Remark cohomology notation} and Notation \ref{Remark notation}: Let $X$ be any Calabi dream manifold with discrete automorphism group (many examples of these have appeared in \cite{ChenChengII}, see also the discussion in this paper). Consider the blowup
$
\pi_p: Y_p \rightarrow X
$
of $X$ at any point $p \in X$. Let $E$ be the exceptional divisor and let $[\omega] \in \mathcal{C}_X$ be a K\"ahler class on $X$. By Arezzo-Pacard \cite{ArezzoPacard} it follows that $Y_p$ admits a constant scalar curvature K\"ahler metric in every K\"ahler class of the form
$$
\gamma_{\epsilon} := \pi^*[\omega] - \epsilon[E]
$$
for $\epsilon > 0$ small enough. In other words, we can find K\"ahler classes $\gamma_{\epsilon}$ arbitrarily close to the boundary class $\pi^*[\omega] \in \partial \mathcal{C}_X$ in which the constant scalar curvature equation can be solved. On the other hand, $Y := Y_p$ contains a $(-1)$-curve, so by Corollary \ref{Corollary unstable example} it follows that $\Gamma^{\mathrm{pp}}_{- c_1(Y)}(\gamma_{\epsilon})$ tends to minus infinity as $\epsilon \rightarrow 0$. It is interesting to note that for any blowup of a Calabi dream manifold as above, the entropy term compensates sufficiently in order to make the Mabuchi K-energy proper. 

A similar phenomenon can also be observed in light of the main result of \cite{JianShiSong}. 

\bigskip 

\section{Applications part II: Algebraic stability notions, openness and optimal degenerations}

\medskip

\subsection{Analytic and algebraic thresholds and existence of optimal degenerations} \label{Section optimal degeneration}

\noindent In Section \ref{Section threshold properties} we have established a precise description of how the stability threshold $\Gamma_{\theta}^{\mathrm{pp}}(\omega)$ behaves under variation of $[\theta]$, and we gave in our main result an explicit formula for the variation also in $[\omega] \in \mathcal{C}_X$ as long as we restrict to surfaces.  Extending such a result to higher dimension would be important for applications. Even without an explicit formula, it is fascinating to try to understand if the stability threshold varies continuously also in higher dimension. 
By combining our results with the assumption that the Lejmi-Sz\'ekelyhidi conjecture holds (see e.g. \cite{GaoChen}), we can address this question.  %Building on the recent paper of Gao Chen [XX], 
More precisely, the linearity of the stability threshold functions can be used to compare the analytic threshold $\Gamma_{\theta}^{\mathrm{pp}}(\omega)$ and the algebraic stability threshold associated to uniform J-stability (which is known to be upper semi-continuous as we vary $[\omega] \in \mathcal{C}_X$, see \cite{SD3}). This will use the formalism in e.g. \cite{LejmiGabor, SD1, DervanRoss, SD2, Dervanrelative, GaoChen} to which we refer for details and full definitions, see also \cite{Berman,BHJ2,LejmiGabor} in the projective case. 

To state our results we use the non-projective formalism for test configurations that was introduced in \cite{SD1,DervanRoss}, which in the projective case coincides with the notion of Lejmi-Sz\'ekelyhidi \cite{LejmiGabor}. In this terminology, consider the norm of test configuration given by 
$$
||(\mathcal{X},\mathcal{A})|| := \lim_{t \rightarrow +\infty} t^{-1} \mathrm{E}_{\omega}^{\omega}(\varphi_t),
$$
where $(\mathcal{X},\mathcal{A})$ is a test configuration for $(X,[\omega])$ in the sense of \cite{SD1, DervanRoss}, and $(\varphi_t)$ is the associated geodesic ray (in the sense of \cite{SD1, DervanRoss}). Write moreover
\begin{equation} \label{Equation asymptotic}
\mathbf{E}^{\theta}_{\omega}(\mathcal{X},\mathcal{A}) := \lim_{t \rightarrow +\infty} t^{-1} \mathrm{E}_{\omega}^{\theta}(\varphi_t)
\end{equation}
for the radial energy functional corresponding to the pair $([\theta],[\omega]) \in \mathcal{C}_X \times \mathcal{C}_X$ (by e.g. \cite[Theorem B]{SD1} the non-Archimedean functional $\mathbf{E}^{\theta}_{\omega}$ is an intersection number and is independent of the choice of representative $\theta \in [\theta]$ and $\omega \in [\omega]$). Note that it is well-defined by convexity, whenever both $[\theta]$ and $[\omega]$ are K\"ahler classes. Define also an \emph{algebraic stability threshold} by 
$$
\Delta^{\mathrm{pp}}_{\theta}(\omega) := \sup \{ \delta \in \mathbb{R} \; \vert \; \mathbf{E}^{\theta}_{\omega}(\mathcal{X},\mathcal{A}) \geq \delta ||(\mathcal{X},\mathcal{A})|| \} 
$$
$$
= \inf_{||(\mathcal{X},\mathcal{A})|| = 1} \mathbf{E}^{\theta}_{\omega}(\mathcal{X},\mathcal{A})
$$
for all test configurations $(\mathcal{X},\mathcal{A})$ for $(X,[\omega])$. 
We say that $(X,\omega)$ is uniformly J-stable if $\Delta^{\mathrm{pp}}_{\theta}(\omega) > 0$, and let it be understood that the notion considered depends also on the underlying class $[\theta] \in \mathcal{C}_X$ that we fix. 

We now add the observation that if the Lejmi-Sz\'ekelyhidi conjecture holds (cf. \cite{GaoChen}), then not only is J-stability equivalent to properness of the relevant functional, but the respective algebraic and analytic thresholds in fact coincide (which in turn strongly suggest that the thresholds vary continuously as a function of $([\theta],[\omega]) \in \mathcal{C}_X \times \mathcal{C}_X$):  

\begin{thm} \label{Thm thresholds coincide}
Let $X$ be a compact K\"ahler manifold with discrete automorphism group for which the Lejmi-Sz\'ekelyhidi conjecture holds \emph{(e.g. $n = 2$ or $X$ toric)}. Suppose that $(\theta,\omega)$ are K\"ahler forms on $X$ such that $\Gamma_{\theta}^{\mathrm{pp}}(\omega) < \mathcal{T}([\theta],[\omega])$. Then the analytic and algebraic stability thresholds coincide, i.e.
$$ 
\Delta_{\theta}^{\mathrm{pp}}(\omega) = \Gamma_{\theta}^{\mathrm{pp}}(\omega).
$$
In particular, we have the formula
$$
\Delta^{\mathrm{pp}}_{\theta}(\omega) = \inf_{V} \frac{ C_{\theta,\omega} \int_V \omega^p - p\int_V \omega^{p-1} \wedge \theta}{{(n-p) \int_V \omega^p}},
$$ 
where the infimum is taken over all subvarieties $V \subset X$ of dimension $p \leq n - 1$. 
\end{thm}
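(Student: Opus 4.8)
The plan is to run the same piecewise-linearity strategy as in the proof of Theorem~\ref{Thm formula higher dimension}, but now carried out \emph{simultaneously} for the analytic threshold $\Gamma_\theta^{\mathrm{pp}}(\omega)$ and its algebraic counterpart $\Delta_\theta^{\mathrm{pp}}(\omega)$ along the segment $\theta_s := (1-s)\theta + s\omega$, $s \in [0,1]$. The key reduction is that it suffices to prove the two thresholds agree at two values of $s$; linearity in $s$ then forces agreement everywhere, in particular at $s=0$, which is the desired identity.

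First I would record the ``easy'' inequality $\Gamma_\theta^{\mathrm{pp}}(\omega) \le \Delta_\theta^{\mathrm{pp}}(\omega)$, which holds in general. If $\mathrm{E}_\omega^\theta(\varphi) \ge \delta\,\mathrm{E}_\omega^\omega(\varphi) - C$ for all $\varphi$ (recall $\mathrm{E}_\omega^\omega = \mathrm{I}_\omega - \mathrm{J}_\omega$ by Proposition~\ref{Proposition norm equals twisted}), then evaluating along the geodesic ray $(\varphi_t)$ associated to any test configuration $(\mathcal{X},\mathcal{A})$, dividing by $t$ and letting $t \to +\infty$, yields $\mathbf{E}_\omega^\theta(\mathcal{X},\mathcal{A}) \ge \delta\,\|(\mathcal{X},\mathcal{A})\|$; taking the supremum over admissible $\delta$ gives $\Gamma_\theta^{\mathrm{pp}}(\omega) \le \Delta_\theta^{\mathrm{pp}}(\omega)$.

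Next I would establish linearity of $s \mapsto \Delta_{\theta_s}^{\mathrm{pp}}(\omega)$ on $[0,1]$, the algebraic analogue of Lemmas~\ref{Lemma add omega} and~\ref{Lemma piecewise linear}. Since the non-Archimedean functional is linear in the twisting argument and $\mathbf{E}_\omega^\omega = \|\cdot\|$ by definition of the norm, one has $\mathbf{E}_\omega^{\theta_s}(\mathcal{X},\mathcal{A}) = (1-s)\mathbf{E}_\omega^\theta(\mathcal{X},\mathcal{A}) + s\,\|(\mathcal{X},\mathcal{A})\|$; taking the infimum over $\|(\mathcal{X},\mathcal{A})\| = 1$ and using $1-s \ge 0$ gives $\Delta_{\theta_s}^{\mathrm{pp}}(\omega) = (1-s)\Delta_\theta^{\mathrm{pp}}(\omega) + s$, which is affine in $s$ with $\Delta_{\theta_1}^{\mathrm{pp}}(\omega) = 1$. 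By Lemma~\ref{Lemma add omega} the analytic threshold is likewise affine in $s$ with $\Gamma_{\theta_1}^{\mathrm{pp}}(\omega) = 1$, so $\Gamma$ and $\Delta$ already agree at $s = 1$.

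It then remains to produce a second common value $s_0 \in [0,1)$. The hypothesis $\Gamma_\theta^{\mathrm{pp}}(\omega) < \mathcal{T}([\theta],[\omega])$ guarantees, exactly as in the proofs of Theorems~\ref{Thm formula higher dimension} and~\ref{Thm optimal degeneration intro}, the existence of $s_0 \in [0,1)$ with $\Gamma_{\theta_{s_0}}^{\mathrm{pp}}(\omega) = 0$ together with a subvariety $V_{s_0} \subset X$ realizing the infimum in Theorem~\ref{Thm formula higher dimension}, so that $L_{V_{s_0}}([\theta_{s_0}],[\omega]) = 0$. The decisive step is to compute the non-Archimedean energy and norm of the test configuration $(\mathcal{X}_{V_{s_0}},\mathcal{A}_{V_{s_0}})$ given by deformation to the normal cone of $V_{s_0}$: using the intersection-number formula for $\mathbf{E}_\omega^\theta$ (e.g.\ \cite[Theorem B]{SD1}) together with the slope-type intersection computation of Lejmi--Sz\'ekelyhidi, one checks that the ratio $\mathbf{E}_\omega^{\theta_{s_0}}(\mathcal{X}_{V_{s_0}},\mathcal{A}_{V_{s_0}}) / \|(\mathcal{X}_{V_{s_0}},\mathcal{A}_{V_{s_0}})\|$ equals $L_{V_{s_0}}([\theta_{s_0}],[\omega]) = 0$, while the norm of this nontrivial test configuration is strictly positive. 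Hence $\Delta_{\theta_{s_0}}^{\mathrm{pp}}(\omega) \le 0$, and combined with the easy inequality $\Delta_{\theta_{s_0}}^{\mathrm{pp}}(\omega) \ge \Gamma_{\theta_{s_0}}^{\mathrm{pp}}(\omega) = 0$ we obtain $\Delta_{\theta_{s_0}}^{\mathrm{pp}}(\omega) = 0 = \Gamma_{\theta_{s_0}}^{\mathrm{pp}}(\omega)$. Two affine functions agreeing at $s_0$ and at $s=1$ coincide on $[0,1]$, and evaluating at $s=0$ gives $\Delta_\theta^{\mathrm{pp}}(\omega) = \Gamma_\theta^{\mathrm{pp}}(\omega)$; the displayed formula then follows from Theorem~\ref{Thm formula higher dimension}. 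I expect the main obstacle to be precisely this last intersection-theoretic identification, namely verifying that the deformation to the normal cone of $V$ realizes the cohomological quantity $L_V([\theta],[\omega])$ on the non-Archimedean side, which is what ultimately makes the algebraic and analytic optimizers coincide.
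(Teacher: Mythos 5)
Your overall strategy is the same as the paper's: set $\theta_s=(1-s)\theta+s\omega$, use affineness of both thresholds in $s$ (Lemma~\ref{Lemma add omega}/Lemma~\ref{Lemma piecewise linear} on the analytic side, linearity of the non-Archimedean functional plus $\mathbf{E}^{\omega}_{\omega}=\|\cdot\|$ on the algebraic side), note both equal $1$ at $s=1$, and produce a second common value $s_0<1$. Where you diverge is in how that second point is obtained. The paper argues that, by \cite{CollinsGabor} on the analytic side and \cite{GaoChen} on the algebraic side, positivity of \emph{each} threshold is equivalent to solvability of the $J_{\theta_s,\omega}$-equation; hence the two affine functions have the same sign for all $s$, so they vanish at the same $s_0$, and no computation of any particular test configuration is needed. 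You instead combine the easy inequality $\Gamma^{\mathrm{pp}}_{\theta}(\omega)\le\Delta^{\mathrm{pp}}_{\theta}(\omega)$ (correctly obtained by passing to slopes along geodesic rays) with an explicit evaluation of the deformation to the normal cone of the minimizing subvariety $V_{s_0}$. Your route is more self-contained in spirit and identifies the optimal degeneration concretely (something the paper only extracts afterwards, in Theorem~\ref{Theorem optimal test config} and the remark following it), but it rests on the one claim you flag yourself: that the normalized invariant $\mathbf{E}^{\theta_{s_0}}_{\omega}(\mathcal{X}_{V},\mathcal{A}_{V})/\|(\mathcal{X}_{V},\mathcal{A}_{V})\|$ equals $L_{V}([\theta_{s_0}],[\omega])$ exactly.

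That claim, as stated, is the one genuine soft spot. The deformation to the normal cone comes with a parameter $c\in(0,\epsilon_V)$, and the Lejmi--Sz\'ekelyhidi slope computation produces a polynomial in $c$ whose normalized value recovers the quantity $C_{\theta,\omega}\int_V\omega^p-p\int_V\theta\wedge\omega^{p-1}$ only in the limit $c\to 0$ (both $\mathbf{E}$ and the norm degenerate there), not for a fixed test configuration in the family. For your pinching argument this is harmless --- to conclude $\Delta^{\mathrm{pp}}_{\theta_{s_0}}(\omega)\le 0$ it suffices that a sequence of normalized invariants tends to $L_{V_{s_0}}([\theta_{s_0}],[\omega])=0$ --- but you should phrase it as a limiting statement rather than an exact identity, or else carry out the intersection computation and track the $c$-dependence explicitly. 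With that adjustment your argument closes, and it even yields a slightly stronger conclusion than the paper's proof (an explicit asymptotically minimizing family of test configurations) at the cost of the computation the paper deliberately avoids by invoking the sign equivalences.
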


\begin{proof}
Along the same lines of the proofs of previous sections, let $\theta_s$ be any K\"ahler form on $X$ such that $[\theta_s] := (1-s)[\theta] + s[\omega]$. Consider the quantities
$$
P(s) := \Gamma_{\theta}^{\mathrm{pp}}(\omega) \; , \; Q(s) := \inf_{||(\mathcal{X},\mathcal{A})|| = 1} \mathbf{E}^{\theta}_{\omega}(\mathcal{X},\mathcal{A})  
$$
which are both linear for $s \leq 1$. It follows from Lemma \ref{Lemma piecewise linear} and \eqref{Equation asymptotic} that $P(1) = Q(1) = 1$. Moreover, by \cite{GaoChen} both $P(s) > 0$ and $Q(s) > 0$ are equivalent to existence of a solution to the J-equation. Therefore, as before, if there exists a K\"ahler form $\theta$ on $X$ for which the $J_{\theta,\omega}$-equation \eqref{Equation J} is \emph{not} solvable, then $P(s_0) = Q(s_0)$ also for some $s_0 \in [0,1]$ with $s_0 \neq 1$. 
The equality of the two thresholds extends to $\beta_s$ for all $s \leq 1$ by linearity. Taking $s = 0$ finishes the proof.
\end{proof}

\begin{rem} \label{Section sub continuity} \emph{(Applications to continuity and openness of J-stability) It is interesting to discuss implications of the above Theorem \ref{Thm thresholds coincide} to the study continuity properties of the stability thresholds as we vary $[\theta]$ and $[\omega]$. Continuity is an immediate consequence of \cite{Chen2000} in the case of surfaces, and in general it was observed in \cite{SD3} that $\Delta_{\theta}^{\mathrm{pp}}(\omega)$ is upper semi-continuous. On the other hand, it is expected that $\Gamma_{\theta}^{\mathrm{pp}}(\omega)$, viewed as a cohomological quantity in the sense of Remark \ref{Remark cohomology notation} and Notation \ref{Remark notation}, is lower semi-continuous. Putting this information together would lead us to expect that both thresholds are in fact continuous, at least when our main results (Theorem \ref{Thm main intro surfaces} and \ref{Thm formula higher dimension intro}) hold, i.e. on the open subset 
$$
\mathcal{V}_X := \{([\theta],[\omega]) \in H^{1,1}(X,\mathbb{R}) \times \mathcal{C}_X \; : \; \Gamma_{\theta}^{\mathrm{pp}}(\omega) < \mathcal{T}([\theta],[\omega]) \}
$$ 
of $H^{1,1}(X,\mathbb{R}) \times \mathcal{C}_X$. 
This would in turn give a direct and simple proof of openness of uniform J-stability as we vary the underlying K\"ahler classes.}
\end{rem}

\noindent The continuity of stability threshold is a useful piece of information, and one may ask if the stability threshold is in fact always a rational function in the underlying K\"ahler classes, as is the case for surfaces (in the sense of Section \ref{Section main result surfaces}, see also Section \ref{Section examples}). This explicit rational expression for the stability threshold, valid in particular for all J-unstable classes, can moreover be used to add a last piece of information to the characterization of the subcone $\mathrm{Js}_X \subset \mathcal{C}_X$ of J-stable classes in the K\"ahler cone, with reference to any fixed $[\theta] \in \mathcal{C}_X$. For the purpose of stating our main results, let us recall that $(X,\omega)$ is J-semistable if and only if $\Delta_{\theta}^{\mathrm{pp}}(\omega) \geq 0$, depending only on the cohomology classes $[\theta]$ and $[\omega]$, and write $\mathrm{Jss}_X\subset \mathcal{C}_X$ for the subcone of J-semistable classes. If $X$ is a compact K\"ahler surface then both Lemma \ref{Lemma piecewise linear} and Theorem \ref{Thm thresholds coincide} apply, and hence we confirm the expectation that the J-semistable locus is precisely the closure of the (uniformly) J-stable locus: 

\begin{cor} \label{Cor closure} Let $X$ be a compact K\"ahler surface with discrete automorphism group. Then the J-semistable locus equals the closure of the uniformly J-stable locus in the K\"ahler cone, i.e. 
$$
\mathrm{Jss}_X = \overline{\mathrm{Js}}_X
$$
in $\mathcal{C}_X$. 
\end{cor}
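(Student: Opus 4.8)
The plan is to prove the two inclusions $\overline{\mathrm{Js}}_X \subseteq \mathrm{Jss}_X$ and $\mathrm{Jss}_X \subseteq \overline{\mathrm{Js}}_X$ separately, working throughout with the fixed reference class $[\theta] \in \mathcal{C}_X$ and exploiting that, for surfaces, Theorem \ref{Thm thresholds coincide} identifies the algebraic threshold $\Delta^{\mathrm{pp}}_{\theta}$ with the analytic one $\Gamma^{\mathrm{pp}}_{\theta}$ on the region where $\Gamma^{\mathrm{pp}}_{\theta}(\omega) < \mathcal{T}([\theta],[\omega])$, while on the complementary region the $J$-equation is automatically solvable (so those classes are uniformly $J$-stable). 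Since by \cite{Chen2000} (see Remark \ref{Section sub continuity}) the threshold is a continuous, indeed rational, function of $[\omega]$ on all of $\mathcal{C}_X$, I would first record that $\mathrm{Js}_X = \{[\omega] : \Delta^{\mathrm{pp}}_{\theta}(\omega) > 0\}$ is open and $\mathrm{Jss}_X = \{[\omega] : \Delta^{\mathrm{pp}}_{\theta}(\omega) \geq 0\}$ is closed in $\mathcal{C}_X$. The inclusion $\overline{\mathrm{Js}}_X \subseteq \mathrm{Jss}_X$ is then immediate, $\mathrm{Jss}_X$ being a closed set containing $\mathrm{Js}_X$.

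For the reverse inclusion I would take $[\omega] \in \mathrm{Jss}_X$; if $\Delta^{\mathrm{pp}}_{\theta}(\omega) > 0$ there is nothing to prove, so assume $\Delta^{\mathrm{pp}}_{\theta}(\omega) = \Gamma^{\mathrm{pp}}_{\theta}(\omega) = 0$. Using the explicit surface formula (Theorem \ref{Thm main surfaces big version}), which applies since $0 = \Gamma^{\mathrm{pp}}_{\theta}(\omega) < \mathcal{T}([\theta],[\omega])$, the vanishing of the threshold reads $C_{\theta,\omega} = \sigma([\theta],[\omega])$, so the class $a := C_{\theta,\omega}[\omega] - [\theta]$ is nef but not K\"ahler, i.e. $a \in \partial\mathcal{C}_X$. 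The decisive observation is that this boundary class is automatically normalized: a direct intersection-number computation using $C_{\theta,\omega} = 2[\theta]\cdot[\omega]/[\omega]^2$ gives $a^2 = C_{\theta,\omega}^2[\omega]^2 - 2C_{\theta,\omega}[\theta]\cdot[\omega] + [\theta]^2 = [\theta]^2$. Hence $[\omega]$ is proportional to the midpoint $\tfrac12 a + \tfrac12[\theta]$ of the segment $[\omega_t] := (1-t)a + t[\theta]$ joining $a$ to $[\theta]$, which is precisely the normalized configuration of Theorem \ref{Theorem application 3}(2).

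With this identification the computation \eqref{Equation R(t)} of Section \ref{Section main result surfaces} applies verbatim: the $J_{\theta,\omega_t}$-equation is solvable precisely for $t \in (1/2,1]$, so $[\omega_t] \in \mathrm{Js}_X$ for every such $t$. Since $[\omega]$ is a positive multiple of $\lim_{t \to 1/2^+}[\omega_t]$, and since both $\mathrm{Js}_X$ and $\mathrm{Jss}_X$ are cones (the threshold scales by $\lambda^{-1}$ under $[\omega] \mapsto \lambda[\omega]$, which is immediate from the surface formula, cf. Lemma \ref{Lemma one over t}), it follows that $[\omega] \in \overline{\mathrm{Js}}_X$, completing the reverse inclusion.

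The step I expect to require the most care is not the algebra but making the reduction airtight: one must verify that the semistable class genuinely satisfies the hypothesis $\Gamma^{\mathrm{pp}}_{\theta} < \mathcal{T}$ needed to invoke both Theorem \ref{Thm thresholds coincide} and the surface formula (here guaranteed because $[\theta]$ and $[\omega]$ are K\"ahler, whence $\mathcal{T}([\theta],[\omega]) > 0$), and that the degenerate case $a = 0$ is correctly excluded, since $a = 0$ would force $[\theta] \propto [\omega]$ and hence $\Gamma^{\mathrm{pp}}_{\theta}(\omega) > 0$ by Lemma \ref{Lemma add omega}, contradicting that $[\omega]$ is semistable but not stable. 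Once these checks are in place the argument reduces to the self-intersection identity $a^2 = [\theta]^2$ combined with the already-established rational formula, so no substantially new analytic input beyond Theorem \ref{Thm thresholds coincide} is needed.
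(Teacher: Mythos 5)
Your proof is correct and follows the same route the paper takes, namely combining the rationality/continuity of the threshold on surfaces with Theorem \ref{Thm thresholds coincide} and the explicit formula; the paper itself only gives a one-line justification along these lines. Your added computation that a class with vanishing threshold satisfies $a^{2}=[\theta]^{2}$ for $a=C_{\theta,\omega}[\omega]-[\theta]$, and hence sits at the $t=1/2$ endpoint of the normalized segment of Theorem \ref{Theorem application 3}(2), is a genuine and necessary supplement: continuity alone would not rule out the threshold having an isolated zero or a local maximum at $0$, and this is exactly the point your argument settles.
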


\noindent In particular it is immediately seen from this that uniform J-stability is equivalent to the usual J-stability notion. It would be interesting to extend this result to the general higher dimensional case.  

Finally, we use the established piecewise linearity of the stability threshold to show existence of a minimizing test configuration for the algebraic threshold, assuming that J-stability is equivalent to uniform J-stability:

\begin{thm} \label{Theorem optimal test config} %\label{Theorem general higher dim case} 
Suppose that $X$ is a compact K\"ahler manifold with discrete automorphism group such that every J-stable class is uniformly J-stable \emph{(e.g. n = 2)}. If $\Delta_{\theta}^{\mathrm{pp}}(\omega) < \mathcal{T}([\theta],[\omega])$, then 
the infimum is achieved by a minimizing test configuration $(\mathcal{X},\mathcal{A})_{min}$ of norm $1$ for $(X,[\omega])$, i.e. 
$$
\Delta_{\theta}^{\mathrm{pp}}(\omega) = \mathbf{E}^{\theta}_{\omega}(\mathcal{X},\mathcal{A})_{min}.
$$
\end{thm}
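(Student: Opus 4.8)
The plan is to carry out, on the algebraic side, the same strategy used for Theorem \ref{Thm optimal degeneration intro}, replacing the analytic threshold $\Gamma^{\mathrm{pp}}$ by the algebraic one $\Delta^{\mathrm{pp}}$ and the deformation-to-normal-cone subvarieties by the test configurations that realize the corresponding infima. The one genuinely new ingredient is the hypothesis that J-stability and uniform J-stability coincide, which is what will allow me to pass from a vanishing threshold to an \emph{attained} infimum.

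First I would record the algebraic analogue of Lemma \ref{Lemma piecewise linear}. Along the path $\theta_s := (1-s)\theta + s\omega$, the non-Archimedean functional $\mathbf{E}^{\theta}_{\omega}(\mathcal{X},\mathcal{A})$ is an intersection number (\cite[Theorem B]{SD1}) and hence linear in the twisting class, so for any fixed test configuration $(\mathcal{X},\mathcal{A})$ of norm $1$ one has
\[
\mathbf{E}^{\theta_s}_{\omega}(\mathcal{X},\mathcal{A}) = (1-s)\mathbf{E}^{\theta}_{\omega}(\mathcal{X},\mathcal{A}) + s ,
\]
using $\mathbf{E}^{\omega}_{\omega}(\mathcal{X},\mathcal{A}) = ||(\mathcal{X},\mathcal{A})|| = 1$. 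Taking the infimum over all norm-$1$ test configurations and using $1-s \geq 0$ for $s \leq 1$ gives the linearity $\Delta^{\mathrm{pp}}_{\theta_s}(\omega) = (1-s)\Delta^{\mathrm{pp}}_{\theta}(\omega) + s$ for $s \leq 1$. Exactly as in the proof of Theorem \ref{Thm optimal degeneration intro}, the hypothesis $\Delta^{\mathrm{pp}}_{\theta}(\omega) < \mathcal{T}([\theta],[\omega])$ then produces a value $s_0 < 1$ at which this linear function vanishes, i.e. $\Delta^{\mathrm{pp}}_{\theta_{s_0}}(\omega) = 0$, and for which $[\theta_{s_0}] = (1-s_0)\big([\theta] - \Delta^{\mathrm{pp}}_{\theta}(\omega)[\omega]\big)$ is a positive multiple of a K\"ahler class; here the strict inequality $\Delta^{\mathrm{pp}}_{\theta}(\omega) < \mathcal{T}([\theta],[\omega])$ is used precisely to ensure that $[\theta]-\Delta^{\mathrm{pp}}_{\theta}(\omega)[\omega]$ lies in the interior of the nef cone.

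The heart of the argument is to manufacture an honest minimizer at $s_0$. Since $\Delta^{\mathrm{pp}}_{\theta_{s_0}}(\omega)=0$, the K\"ahler pair $([\theta_{s_0}],[\omega])$ is J-semistable but not uniformly J-stable. Invoking the standing hypothesis in its contrapositive form --- not uniformly J-stable implies not J-stable --- there must exist a nontrivial test configuration $(\mathcal{X},\mathcal{A})$ with $\mathbf{E}^{\theta_{s_0}}_{\omega}(\mathcal{X},\mathcal{A}) \leq 0$. As a nontrivial test configuration has strictly positive norm, I may normalize it to $||(\mathcal{X},\mathcal{A})|| = 1$; combining $\mathbf{E}^{\theta_{s_0}}_{\omega}(\mathcal{X},\mathcal{A}) \leq 0$ with the J-semistability bound $\mathbf{E}^{\theta_{s_0}}_{\omega} \geq 0$ forces $\mathbf{E}^{\theta_{s_0}}_{\omega}(\mathcal{X},\mathcal{A}) = 0 = \Delta^{\mathrm{pp}}_{\theta_{s_0}}(\omega)$, so this test configuration attains the infimum at $s_0$. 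Finally I would transport it back to $s=0$: for this fixed $(\mathcal{X},\mathcal{A})$ the two functions $s \mapsto \mathbf{E}^{\theta_s}_{\omega}(\mathcal{X},\mathcal{A})$ and $s \mapsto \Delta^{\mathrm{pp}}_{\theta_s}(\omega)$, both linear on $s\leq 1$, agree at $s=s_0$ and at $s=1$ (where both equal $1$), hence coincide for all $s \leq 1$; evaluating at $s=0$ gives $\mathbf{E}^{\theta}_{\omega}(\mathcal{X},\mathcal{A}) = \Delta^{\mathrm{pp}}_{\theta}(\omega)$, so $(\mathcal{X},\mathcal{A})_{min} := (\mathcal{X},\mathcal{A})$ is the desired minimizing test configuration.

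The step I expect to be the main obstacle is the production of the minimizer at $s_0$: a priori the infimum defining $\Delta^{\mathrm{pp}}$ need not be attained, and it is exactly here that the assumption relating J-stability and uniform J-stability is indispensable. Without it, the vanishing $\Delta^{\mathrm{pp}}_{\theta_{s_0}}(\omega)=0$ would be consistent with $\mathbf{E}^{\theta_{s_0}}_{\omega}(\mathcal{X},\mathcal{A}) > 0$ for every individual test configuration while the infimum is still $0$, in which case no minimizer would exist. A secondary point to verify carefully is that the destabilizing test configuration furnished by non-J-stability indeed has positive norm, so that the normalization is legitimate in the formalism of \cite{SD1, DervanRoss}; this is where one uses that nontrivial configurations are not pulled back from the trivial one.
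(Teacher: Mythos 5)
Your proposal is correct and follows essentially the same route as the paper: linearity of $\Delta^{\mathrm{pp}}_{\theta_s}(\omega)$ along $\theta_s=(1-s)\theta+s\omega$, locating $s_0<1$ with $\Delta^{\mathrm{pp}}_{\theta_{s_0}}(\omega)=0$, extracting a destabilizing test configuration there from the equivalence of J-stability and uniform J-stability, and transporting it back to $s=0$ by matching two linear functions at two points. In fact you spell out two steps the paper leaves implicit — that the hypothesis $\Delta^{\mathrm{pp}}_{\theta}(\omega)<\mathcal{T}([\theta],[\omega])$ is exactly what keeps $[\theta_{s_0}]=(1-s_0)([\theta]-\Delta^{\mathrm{pp}}_{\theta}(\omega)[\omega])$ K\"ahler, and that combining semistability with $\mathbf{E}^{\theta_{s_0}}_{\omega}\leq 0$ forces equality — so no changes are needed.
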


\begin{proof}
With the same setup as above, let $\theta_s$ be any smooth closed $(1,1)$-form on $X$ such that $[\theta_s] := (1-s)[\theta] + s[\omega]$, with $[\theta], [\omega] \in \mathcal{C}_X$, and consider the quantity
$$
%P(s) := \Delta_{\theta_s}^{\mathrm{pp}}(\omega) \; , \; 
Q(s) := \inf_{||(\mathcal{X},\mathcal{A})|| = 1} \mathbf{E}^{\theta_s}_{\omega}(\mathcal{X},\mathcal{A})  
$$
which is linear for $s \in [0,1]$ (this may be established by the exact same proof as in Lemma \ref{Lemma piecewise linear}, using also \eqref{Equation asymptotic}). 
In order to establish existence of a minimizer, assume that there exists an $s_0 \in (0,1)$ such that $Q(s) \leq 0$ for all $s \in [0,s_0]$. By continuity we then have $P(s_0) = Q(s_0) = 0$. Moreover, if J-stability and uniform J-stability are equivalent notions, then there exists a test configuration $(\tilde{\mathcal{X}},\tilde{\mathcal{A}})$ for which 
$$
\mathbf{E}^{\theta}_{\omega}(\tilde{\mathcal{X}},\tilde{\mathcal{A}}) = 0.
$$
This test configuration then minimizes $Q(s)$ for all $s \in [0,1]$, by linearity. 
\end{proof}

\noindent Provided that the Lejmi-Sz\'ekelyhidi conjecture holds (see \cite{GaoChen}) it follows from Theorem \ref{Thm inf achieved} that the above test configuration $(\tilde{\mathcal{X}},\tilde{\mathcal{A}})$ can be taken to be the slope test configuration where $\mathcal{X}$ is the deformation to the normal cone of the minimizing subvariety $V_{min} \subset X$. 

\medskip

\subsection{Acknowledgements}

I would like to thank Claudio Arezzo, Tristan Collins, Tam\'as Darvas, Ruadha\'i Dervan, Lothar G\"ottsche, Robert Berman, Julien Keller, Jacopo Stoppa and Roberto Svaldi for discussions related to this work. I also extend my gratitude to the anonymous referee for many helpful comments, and to ICTP Trieste, for providing excellent working conditions during the course of the project.

\end{document}